\documentclass[11pt, a4paper, oneside]{amsart}

\usepackage{amsfonts, amssymb, txfonts}
\usepackage{commath}
\usepackage{empheq}
\usepackage{amsmath, braket, eucal, amsthm, mathdots}

\usepackage[utf8]{inputenc}
\DeclareFontFamily{U}{min}{}
\DeclareFontShape{U}{min}{m}{n}{<-> udmj30}{}

\usepackage{dsfont}
\usepackage{multicol}
\usepackage{comment}
\usepackage{bbm}

\usepackage{xcolor}
\definecolor{myurlcolor}{rgb}{0.1,0.1,0.8}
\definecolor{mylinkcolor}{rgb}{0.05,0.05,0.4}

\usepackage{hyperref}
\hypersetup{colorlinks,
	linkcolor=mylinkcolor,
	citecolor=mylinkcolor,
	urlcolor=myurlcolor}

\usepackage{cleveref}

\usepackage[cmtip,all]{xy} %%required for using xymatrix (sometimes useful)

\usepackage{graphicx} % Required for inserting images
\theoremstyle{plain}

\usepackage{tikz,tikz-cd}
\usetikzlibrary{decorations.pathreplacing}
\usetikzlibrary{matrix,arrows,decorations.pathmorphing,decorations.markings}

\newtheorem{thm}{Theorem}[subsection]

\newtheorem{prop}[thm]{Proposition}
\newtheorem{lem}[thm]{Lemma}
\newtheorem{cor}[thm]{Corollary}

\newtheorem*{thmNMet-Brown-cat}{\Cref{thm:NMet-Brown-cat}}

\theoremstyle{definition}
\newtheorem{defn}[thm]{Definition}
\theoremstyle{remark}
\newtheorem{rem}[thm]{Remark}

\newtheorem{eg}[thm]{Example}

\newtheorem{notation}[thm]{Notation}

\usepackage{capt-of}

\usepackage{enumitem}

\usepackage{mathrsfs}

\usepackage{url}

\newcommand{\kk}{\Bbbk}  %the underlying ring
\newcommand{\N}{\mathbb{N}}
\newcommand{\Z}{\mathbb{Z}}
\newcommand{\id}{\mathrm{id}}
\newcommand{\Cyl}{\mathrm{Cyl}}
\newcommand{\Cylr}{\mathrm{Cyl}_{r}}
\newcommand{\fc}{\mathrm{fC}_\kk} %the category of filtered complexes
\newcommand{\Tot}{\mathrm{Tot}}
\newcommand{\bgrmod}{\mathrm{bgMod}_\kk} 		% bigraded modules over underlying ring
\newcommand{\spse}{\mathrm{Spse}_\kk}   %the category of spectral sequences
\newcommand{\Ee}{\mathcal E}
\newcommand{\cof}{\mathrm{Cof}}
\newcommand{\Ho}{\mathrm{Ho}}
\newcommand\smallbullet{\raisebox{-0.5ex}{\scalebox{1.8}{$\cdot$}}}
\newcommand{\pt}{\{{\smallbullet}\}}
\newcommand{\terminal}{!}
\newcommand{\short}{\mathrm{short}}
\newcommand{\plusx}{\otimes_1}
\newcommand{\inhom}[2]{[{#1},{#2}]_{\sup}}
\newcommand{\Rbar}{\overline{\mathbb{R}}_{\geq 0}}

\newcommand{\tuple}{\underline}
\newcommand{\dis}{\texttt{d}} %%%%distance
\newcommand{\dinfty}{\dis_{\sup}}
\renewcommand{\mod}{/\!\!}
\newcommand{\cn}[1]{\mathbf{#1}}

\newcommand{\demph}[1]{\emph{#1}}
\newcommand{\DiGraph}{\cn{DiGraph}}
\newcommand{\colim}{\mathrm{colim}}
\newcommand{\hocolim}{\mathrm{hocolim}}
\newcommand{\Met}{\cn{Met}}
\newcommand{\NMet}{\mathbb{N}\cn{Met}}
\newcommand{\RC}{\mathrm{RC}}
\newcommand{\tRC}{\texttt{RC}}

\newcommand{\MH}{\mathrm{MH}}
\newcommand{\PH}{\mathrm{PH}}
\newcommand{\GLMY}{\mathsf{PH}}

\title[Homotopy theories via the MPSS]{Homotopy theories\\ via the magnitude-path spectral sequence}

\author{Muriel Livernet}
\address{ML: Universit\'e Paris Cit\'e, Sorbonne Universit\'e, CNRS, IMJ-PRG, F-75013 Paris, France}

\author{Emily Roff}
\address{ER: School of Mathematics and Maxwell Institute for Mathematical Sciences, University of Edinburgh, Scotland}

\author{Sarah Whitehouse}
\address{SW: School of Mathematical and Physical Sciences, University of Sheffield, England}

\date{\today}

\begin{document}

\begin{abstract}
    We introduce a family of homotopy theories for generalized metric spaces with natural number distances, via the magnitude-path spectral sequence (MPSS). The first page of the MPSS is known as magnitude homology; the second page is known as bigraded path homology, and contains GLMY path homology as its top row. For each natural number \(r\), we define a class of maps of metric spaces called \emph{\(r\)-quasi-isomorphisms}: those maps that induce a quasi-isomorphism at page \(r\) of the MPSS. We show that every page of the spectral sequence satisfies a suitable metric analogue of each of the Eilenberg--Steenrod axioms. In particular, we introduce the notion of \emph{\(r\)-cofibration} and prove a Mayer--Vietoris theorem for page \(r\) with respect to \(r\)-cofibrations. We establish a family of Brown category structures on generalized metric spaces which allow us to explicitly compute homotopy colimits. We apply this to describe \emph{\(r\)-suspension} and \emph{\(r\)-spheres of dimension \(n\)}, and compute their spectral sequences. Finally we prove that for \(r=1\) the entire theory restricts to directed graphs.
\end{abstract}

\subjclass[2020]{18G90,   %(2020–now)Other (co)homology theories (category-theoretic aspects)
05C20,    %(1973–now)Directed graphs (digraphs), tournaments
%05C25,  %(1973–now)Graphs and abstract algebra (groups, rings, fields, etc.) 
05C38,    %(1980–now)Paths and cycles 
55U35,    %(1980–now)Abstract and axiomatic homotopy theory in algebraic topology
18G40}     %(1973–now)Spectral sequences, hypercohomology
%18G35   %(1973–now)Chain complexes (category-theoretic aspects), dg categories 

\keywords{Metric spaces, directed graphs, spectral sequences, magnitude-path
spectral sequence, homotopy theory}

\maketitle

\setcounter{tocdepth}{1}
\tableofcontents

%-------------------------------------------------------------
%-------------------------------------------------------------

\section{Introduction}
\label{sec:intro}

As the field of applied topology has evolved, it has become increasingly desirable to be able to speak precisely about the `shapes’ of topologically discrete objects such as graphs and finite metric spaces. This has driven the development of various analogues, for discrete objects, of the tools of classical algebraic topology: homotopy and homology. This paper contributes to an ongoing effort to organize the information captured by such theories and shore up their foundations. Specifically, it concerns a homotopy theory known as \emph{quantitative discrete homotopy theory} and its homological counterpart, the \emph{magnitude-path spectral sequence}. We begin by introducing these terms, before contextualizing the questions addressed in this paper and outlining our main results.

\subsubsection*{Quantitative discrete homotopy theory} 

Traditionally, topologists have tended to view graphs as one-dimensional cell complexes. The theories we consider in this paper represent a shift away from that perspective, to regard some graphs as containing higher-dimensional `voids'. For illustration, consider the directed graph in \Cref{fig:sphere-hasse}, which is the Hasse diagram of a decomposition of the 2-sphere into hemispheres: one might wish to regard it as containing a three-dimensional void. That intuition is captured by a homotopy theory for graphs that we will refer to as \emph{discrete homotopy theory}, following Kapulkin, Carranza and others \cite{Carranza_Kapulkin_2024, KapulkinMavinkurve, EIXYZ}.

Discrete homotopy theory appeared first in the setting of undirected graphs, in work by Babson, Barcelo, de Longueville and Laubenbacher, who called it \emph{A-homotopy theory} \cite{BBLL}. It was extended to directed graphs by Grigor'yan, Lin, Muranov and Yau \cite[\S3.1]{GLMY2014} to describe the homotopy-invariance property of \emph{path homology}, which the same authors introduced in \cite{GLMY2013}. (Path homology is also sometimes referred to as \emph{GLMY homology}.) From the point of view of discrete homotopy theory and path homology, graphs need not be `one-dimensional'. In particular, the path homology of the directed graph in  \Cref{fig:sphere-hasse} coincides with the ordinary homology of the topological 2-sphere \cite[Example~6.17]{GLMY2013}.

\begin{figure}
    \includegraphics[width=.23\textwidth]{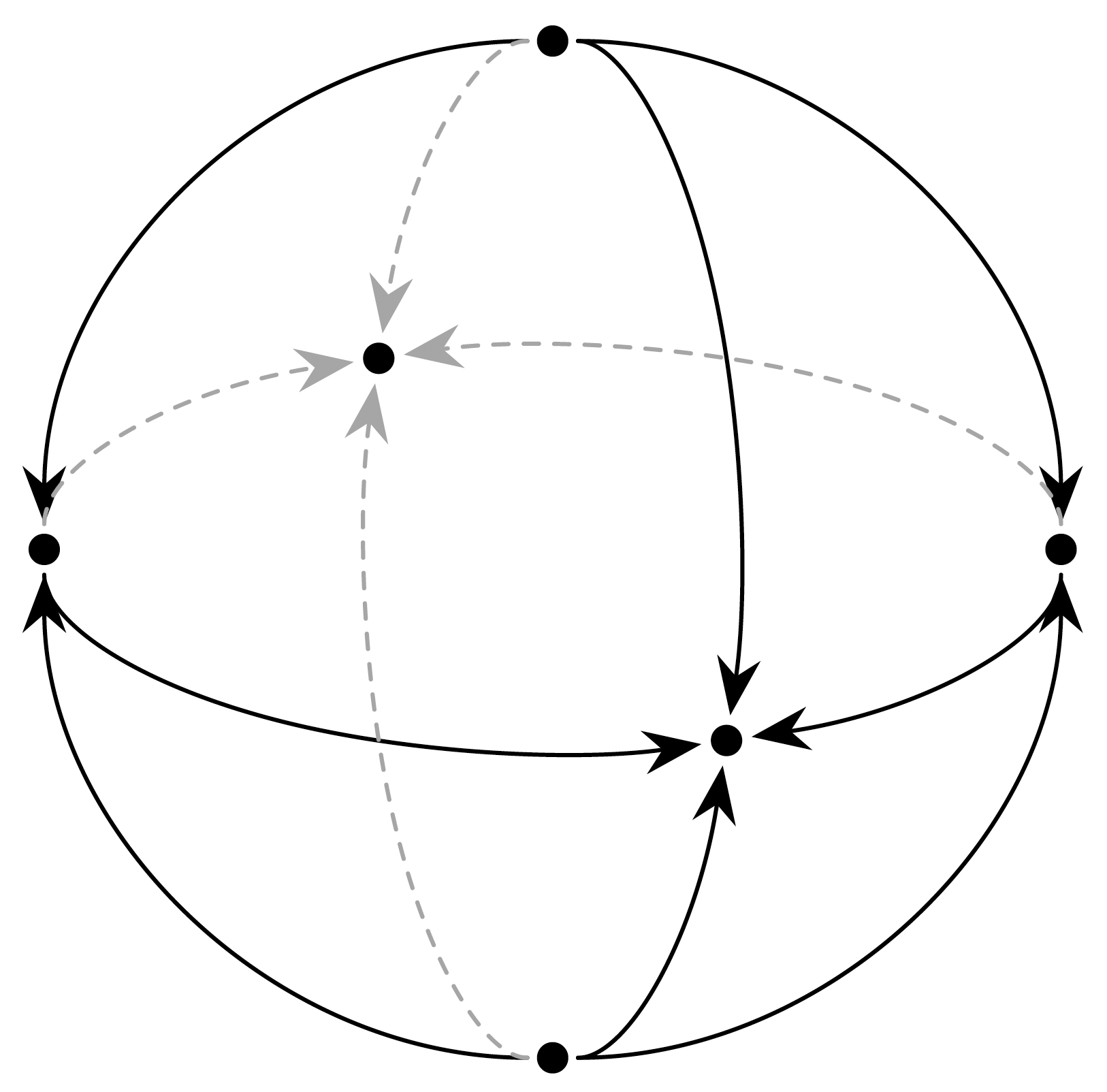}
    \caption{The Hasse diagram of a decomposition of \(S^2\) into hemispheres.}
    \label{fig:sphere-hasse}
\end{figure}

This paper has to do with a modification of discrete homotopy theory that originates in work of Asao \cite{Asao-path, Asao-filtered}. It rests on the fact that every directed graph can be equipped with a distance function, the \emph{shortest-path metric}. This embeds \(\DiGraph\) as a full subcategory of the category \(\Met\) of generalized metric spaces. (In a generalized metric space, distances need not be finite or symmetric.) Consequently, the set of maps between any pair of directed graphs also carries a distance function: the supremum metric. It is straightforward to see that there is a discrete homotopy between maps \(f, g \colon X \rightrightarrows Y\) in \(\DiGraph\) if and only if there exists a finite tuple \((f = f_1, f_2, \ldots, f_k = g)\) of maps from \(X\) to \(Y\) such that, for each \(1 \leq j \leq k-1\), either \(\dis(f_j,f_{j+1}) \leq 1\) or \(\dis(f_{j+1},f_{j}) \leq 1\) in the supremum metric.

From this perspective, discrete homotopy is just the first level in an infinite tower of homotopy theories. One says there is an \emph{\(r\)-homotopy} between \(f\) and \(g\) if there exists a tuple of maps, as above, such that for every \(j\) either \(\dis(f_j,f_{j+1}) \leq r\) or \(\dis(f_{j+1},f_{j}) \leq r\). This defines for each \(r \in \N\) a distinct notion of \emph{\(r\)-homotopy equivalence} for directed graphs. The collection of all these notions of equivalence is what, in a nod to Gromov \cite{Gromov}, we will call \emph{quantitative discrete homotopy theory}.

\subsubsection*{The magnitude-path spectral sequence}

Quantitative discrete homotopy theory extends in a natural way to the category of all generalized metric spaces; indeed, that is arguably the most natural setting in which to study it. In this paper, though, we restrict attention to the subcategory \(\NMet\) whose objects are spaces with metric valued in the natural numbers. This category also contains \(\DiGraph\), and it has the virtue that for every \(r \in \N\) (and any choice of commutative unital ring \(\kk\)) there is a functor from \(\NMet\) to the category of bigraded \(\kk\)-modules that is invariant under \(r\)-homotopies: page \(E^{r+1}\) of the \emph{magnitude-path spectral sequence}, or \emph{MPSS}.

The construction of this spectral sequence was first written down by Hepworth and Willerton in the setting of undirected graphs \cite[Remark 46]{HepworthWillerton2017}. They observed that page \(E^1\) coincides with \emph{magnitude homology}, introduced in the same paper. Later, in the setting of directed graphs, Asao proved that page \(E^2\) contains reduced path homology lying along a single row \cite[Proposition~6.11]{Asao-path}, and observed that the entire page shares path homology's homotopy-invariance \cite[Proposition~5.7]{Asao-path}. This earned the spectral sequence its present name, first used by Di \textit{et al} in \cite{DIMZ}. Since then, the target of the MPSS has been named \emph{reachability homology} \cite{HepworthRoff2023}, and \(E^2\) as a whole \emph{bigraded path homology} \cite{HepworthRoff2024}.

Thus, the magnitude-path spectral sequence encompasses several functors that have been termed `homologies' of metric spaces or graphs. Moreover, just as path homology is a 1-homotopy invariant, for each \(r \geq 1\) page \(E^{r+1}\) of the MPSS is an \(r\)-homotopy invariant. It makes sense, then, to think of the MPSS in its entirety as the homological counterpart to quantitative discrete homotopy theory.

\subsubsection*{Towards a formal framework}

Recently there has been growing interest in placing discrete homotopy within the framework of formal homotopy theory---model categories and related structures---or \(\infty\)-category theory. Both positive and negative results have been obtained in this direction; see, for instance, \cite{CarranzaKapulkinKim2023,Carranza_Kapulkin_2024, EIXYZ}.

Two such results are especially relevant to this paper. In \cite[Theorem 5.1]{CDKOSW}, Carranza \textit{et al} equip \(\DiGraph\) with the structure of a Brown category of cofibrant objects, whose weak equivalences are maps inducing isomorphisms on path homology. (We will recall the definition of a Brown category in \Cref{subsec:brown-cats}.) In \cite[Theorem 7.2]{HepworthRoff2024}, Hepworth and Roff refine that structure to one with the same cofibrations but a smaller class of weak equivalences: maps inducing isomorphisms on bigraded path homology. They pose the question of whether similar structures exist for the later pages of the MPSS. That is, they ask whether \emph{\(r\)-quasi-isomorphisms} in \(\DiGraph\)---maps of graphs inducing isomorphisms on page \(E^{r+1}\)---form the class of weak equivalences in some formal homotopy theory.

In parallel, a separate literature has evolved that appears closely related. Work of Cirici, Livernet, Whitehouse \textit{et al} seeks to construct model categories of filtered complexes \cite{celw}, twisted complexes \cite{celw18}, or multicomplexes \cite{fglw22}, in which the weak equivalences are maps inducing isomorphisms on a specified page of an associated spectral sequence. It seems very natural to try to link these circles of ideas. Indeed, Asao asks in \cite[Remark 4.3.8] {Asao-filtered} whether one can exploit the techniques of \cite{celw} to build a hierarchy of homotopy theories on the category of categories enriched in filtered sets, which contains \(\NMet\) and \(\DiGraph\).

\subsubsection*{Our main results}

This paper aims to advance the understanding of the magnitude-path spectral sequence and of quantitative discrete homotopy theory. In particular, it connects the two literatures mentioned above and offers a partial positive answer to the questions of Hepworth--Roff and Asao.

One of our central contributions is to define, for each natural number \(r \geq 1\), a notion of \emph{\(r\)-cofibration} in \(\NMet\) (\Cref{def:r-cofib}). An \(r\)-cofibration is an especially nice subspace inclusion: one that factors as a \emph{strong \(r\)-deformation retract} followed by an \emph{\(r\)-pair} (Definitions~\ref{def:strong-r-def-retract} and \ref{def:r-pair}). Our main theorem is the following.

\begin{thmNMet-Brown-cat}
    Let \(\Ee_r\) denote the class of \(r\)-quasi-isomorphisms in \(\NMet\) and \(\cof_r\) the class of \(r\)-cofibrations. For each natural number \(r \geq 1\), \((\NMet, \Ee_r, \cof_r)\) is a Brown category of cofibrant objects, with functorial cylinders.
\end{thmNMet-Brown-cat}

On the way to proving \Cref{thm:NMet-Brown-cat}, we establish that every page of the MPSS satisfies a suitable metric analogue of each of the five Eilenberg--Steenrod axioms. Three of these are already known: the \emph{homotopy invariance} property was observed by Asao in \cite{Asao-filtered} and is stated here as \Cref{prop:r-homotopy-invariance}, while the \emph{dimension} and \emph{additivity} properties---which are almost immediate from the construction of the spectral sequence---are stated as Propositions \ref{prop:dimension} and \ref{prop:additivity}. By contrast, the \emph{exactness} of the pages of the MPSS is among the key results of this paper (\Cref{thm:exactness}); it rests on a general observation about maps that may arise only on the first \(r\) pages of a spectral sequence (\Cref{P:partial_filtered}). In \Cref{thm:general-excision} we also prove a new \emph{excision} property for the pages of the MPSS. From exactness and excision together we derive a Mayer--Vietoris sequence on page \(E^r\), with respect to \(r\)-cofibrations (\Cref{thm:mayer-vietoris}).

In a Brown category of cofibrant objects, one has the means to compute many homotopy colimits, including all homotopy pushouts. In \Cref{sec:homotopy-pushouts} we apply this to define an \emph{\(r\)-suspension} functor and construct, for every natural number \(n\) and \(r \geq 1\), an \demph{\(r\)-sphere of dimension \(n\)}, denoted \(\mathbb{S}_r^n\) (\Cref{def:suspension-and-spheres}). Considering two models for the \(r\)-sphere of dimension 1 yields an example illustrating that the class of \(r\)-quasi-isomorphisms is strictly larger than the class of \(r\)-homotopy equivalences (\Cref{prop:E_r_H_r}). In \Cref{thm:spheres} we compute the MPSS for the \(r\)-sphere of dimension \(n\), finding that the bigraded module \(E^{r+1}(\mathbb{S}_r^n)\) is concentrated along a single line, whose slant is determined by \(r\), and on that line it coincides with the ordinary homology of the topological \(n\)-sphere.

\subsubsection*{The structure of the paper} 

\Cref{sec:homotopies} establishes the setting: we describe key features of the categories \(\Met\), \(\NMet\) and \(\DiGraph\), recall the notion of \(r\)-homotopy, and introduce strong \(r\)-deformation retracts. In \Cref{sec:the-MPSS} we collect a few classical facts about filtered complexes and spectral sequences, and prove one new result in that setting, before defining the magnitude-path spectral sequence and describing some of its fundamental properties. \Cref{sec:excision-exactness} contains the exactness and excision theorems. In \Cref{sec:cofibrations-and-MV} we introduce the class of \(r\)-cofibrations and prove the Mayer--Vietoris theorem. The main theorem is proved in \Cref{sec:brown-cats-on-nmet}, and in \Cref{sec:homotopy-pushouts} we apply it to study homotopy pushouts. Finally, in \Cref{sec:brown-cat-on-digraph} we narrow the focus to directed graphs, specializing \Cref{thm:mayer-vietoris} to obtain new Mayer--Vietoris sequences for bigraded path homology and path homology. We verify that in the case \(r=1\) the Brown category structure of \Cref{thm:NMet-Brown-cat} restricts in a non-trivial manner to \(\DiGraph\). The resulting Brown category of directed graphs has the same weak equivalences as in \cite[Theorem 5.1]{HepworthRoff2024}, but a different class of cofibrations.

%-------------------------------------------------------------
%-------------------------------------------------------------

\section{Quantitative discrete homotopy theory}
\label{sec:homotopies}

This section establishes the setting for the paper, beginning with definitions of the relevant categories and some of their basic properties. For each natural number \(r\), the notion of \(r\)-homotopy between maps is explained, together with the associated notion of homotopy equivalence. We define what it means for a subspace to be a strong \(r\)-deformation retract of a larger metric space. This is a variant of a notion due to Ivanov~\cite[\S2.3]{SOIvanov} and it will play an important role in defining \(r\)-cofibrations later. We establish some stability properties for this class of subspace inclusions.

%-------------------------------------------------------------

\subsection{Metric spaces, \(\N\)-metric spaces, and directed graphs}

We introduce the main category studied in this paper, the category \(\NMet\) of \(\N\)-metric spaces, as well as the related categories of directed graphs and (generalized) metric spaces. We describe a closed symmetric monoidal structure on \(\NMet\) and give explicit descriptions of certain well-behaved pushouts and pullbacks.

\begin{defn}\label{def:metric_spaces}
    Let \(\Rbar\) denote the extended real interval \([0, +\infty]\). A \demph{metric space} consists of a set \(X\) and a function \(\dis \colon X \times X \to \Rbar\) satisfying three conditions.
    \begin{enumerate}
        \item \(\dis(x,x) = 0\) for all \(x \in X\). \label{def:unitality}
        \item \(\dis(x,y) = 0\) implies \(x=y\) (\demph{separatedness}). \label{def:separatedness}
        \item \(\dis(x,y) \leq \dis(x,z) + \dis(z,y)\) for all \(x,y,z \in X\) (\demph{the triangle inequality}). \label{def:triangle-ineq}
    \end{enumerate}
    The function \(\dis\) is called the \demph{metric} on \(X\); we will sometimes denote it by \(\dis_X\). We denote by \(\Met\) the category whose objects are metric spaces and in which a morphism \(f \colon (X,\dis_X) \to (Y,\dis_Y)\) is a \demph{1-Lipschitz} map: a function \(f \colon X \to Y\) satisfying
    \[\dis_Y(f(x),f(x')) \leq \dis_X(x,x') \quad \text{for all } x,x' \in X.\]
    We call \((X,\dis)\) an \demph{\(\N\)-metric space} if \(\dis\) is valued in \(\N \cup \{+\infty\} \subset \Rbar\), and denote by \(\NMet\) the full subcategory of \(\Met\) whose objects are \(\N\)-metric spaces.
\end{defn}

\begin{defn}\label{def:isometry}
    A map \(f \colon A \to X\) in \(\Met\) is called a \demph{weak isometry} if it satisfies
    \[\dis_X(f(a),f(a')) = \dis_A(a,a') \quad \text{for all } a,a' \in A \text{ such that } \dis_A(a,a') < \infty.\]
    It is called an \demph{isometry} if it satisfies
    \[\dis_X(f(a),f(a')) = \dis_A(a,a') \quad \text{for all } a,a' \in A.\]
    We will also refer to isometries as \demph{subspace inclusions}.
\end{defn}

To justify this terminology, note that an isometry is always injective: if \(a \neq a'\) then \(0 < \dis_A(a,a') = \dis_X(f(a),f(a'))\), so \(f(a) \neq f(a')\). Thus, a map \(f \colon A \to X\) in \(\Met\) is an isometry if and only if \(A\) is isomorphic to the metric subspace \(f(A)\) of \(X\). Where confusion is unlikely to occur, we will abuse notation slightly by letting \(A\) also denote the subspace \(f(A)\) of \(X\). A bijective isometry is precisely an isomorphism in \(\Met\). 

Our notion of `metric space' differs from the classical one in that a metric need not be symmetric in its variables, and it may not always be finite-valued. This extra generality allows \(\Met\) to accommodate all directed graphs, as we now explain.

\begin{defn}\label{def:digraph}
    A \demph{directed graph} \(G\) is a pair of sets \((V(G), E(G))\) such that \(E(G) \subseteq V(G) \times V(G)\) and for every \(v \in V(G)\) we have \((v,v) \in E(G)\). The elements of \(V(G)\) are called the \demph{vertices} of \(G\) and the elements of \(E(G)\) are its \demph{edges}. We denote by \(\DiGraph\) the category whose objects are directed graphs and in which a morphism \(f \colon G \to H\) is a function \(f \colon V(G) \to V(H)\) satisfying
    \[(u,v) \in E(G) \Rightarrow (f(u),f(v)) \in E(H).\]
    Maps in \(\DiGraph\) will be called \demph{maps of graphs}.
\end{defn}

\begin{defn}\label{def:shortest_path_metric}
    A \demph{directed path} from a vertex \(u\) to a vertex \(v\) in a directed graph \(G\) is a finite sequence of consistently-oriented edges
        \[u = u_0 \to u_1 \to \cdots \to u_k = v.\]
    Note that we do not require the vertices in a directed path to be distinct. Every directed graph \(G\) carries a canonical metric \(\dis_G\) on \(V(G)\), defined by
    \[\dis_G(u,v) = \inf\{k \in \N \mid \text{there is a directed path } u = u_0 \to u_1 \to \cdots \to u_k = v \text{ in } G\}.\]
    This is called the \demph{shortest path metric} on \(G\).
\end{defn}

The shortest path metric need not be symmetric, nor finite-valued. However, it is always separated and it satisfies the triangle inequality, so it makes \(V(G)\) into an \(\N\)-metric space in the sense of \Cref{def:metric_spaces}. We will write \(M(G)\) for \((V(G),\dis_G)\). It is easily verified that a function \(V(G) \to V(H)\) defines a map of graphs \(G \to H\) if and only if it defines a 1-Lipschitz map \(M(G) \to M(H)\). Thus, the assignment \(G \mapsto M(G)\) extends to a functor \(M \colon \DiGraph \hookrightarrow \NMet\) which is full and faithful, exhibiting \(\DiGraph\) as a full subcategory of \(\NMet\). In fact, by the following lemma, it is a coreflective subcategory.

\begin{lem}[{\cite[Lemma~A.1]{HepworthRoff2024}}]\label{lem:coreflective}
    The inclusions 
    \[\DiGraph \xhookrightarrow{M} \NMet \xhookrightarrow{\iota} \Met\]
    each have a right adjoint. \qed
\end{lem}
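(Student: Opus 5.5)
Both right adjoints can be written down explicitly, and in each case I would check the adjunction as a natural bijection of hom-sets. Because the two inclusions are full and faithful, each right adjoint is a coreflector. I will also use the observation made above: a function \(V(G) \to V(H)\) is a map of graphs exactly when it is 1-Lipschitz \(M(G) \to M(H)\).

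For \(\iota \colon \NMet \hookrightarrow \Met\), define the right adjoint \(R\) by rounding distances up. Given a metric space \((X,\dis)\), set \(R(X) = (X, \lceil \dis \rceil)\), where \(\lceil \infty \rceil = \infty\). The checks are as follows.
\begin{enumerate}
\item Unitality and separatedness hold because \(\lceil t \rceil = 0\) if and only if \(t = 0\).
\item The triangle inequality holds because \(\lceil a+b \rceil \leq \lceil a \rceil + \lceil b \rceil\).
\item A 1-Lipschitz map \(X \to Y\) stays 1-Lipschitz for the rounded metrics, since \(\lceil\cdot\rceil\) is monotone. So \(R\) is a functor.
\item The identity function \(\epsilon_X \colon \iota R(X) \to X\) is 1-Lipschitz, since \(\dis \leq \lceil \dis \rceil\).
\end{enumerate}
The key step is then the bijection \(\NMet(Y, R X) \cong \Met(\iota Y, X)\), given by composing with \(\epsilon_X\). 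Suppose \(Y\) is an \(\N\)-metric space and \(f \colon Y \to X\) is 1-Lipschitz. Then \(\dis_X(fy,fy') \leq \dis_Y(y,y') \in \N\cup\{\infty\}\). Since \(\dis_Y(y,y')\) is an integer (or \(\infty\)), this implies \(\lceil \dis_X(fy,fy') \rceil \leq \dis_Y(y,y')\), so \(f\) is 1-Lipschitz into \(R X\). The converse direction is just composition with \(\epsilon_X\).

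For \(M \colon \DiGraph \hookrightarrow \NMet\), define the right adjoint \(U\) by taking the underlying graph of distance \(\leq 1\). Given an \(\N\)-metric space \(X\), let \(U(X)\) have vertex set \(X\) and an edge \((x,y)\) whenever \(\dis_X(x,y) \leq 1\). This contains all loops \((x,x)\), so it is a directed graph. A 1-Lipschitz map preserves the condition \(\dis \leq 1\), so \(U\) is a functor. The counit is the identity function \(M U(X) \to X\). It is 1-Lipschitz by the triangle inequality: a directed path of length \(k\) in \(U(X)\) consists of steps of distance at most \(1\), so its endpoints are at distance at most \(k\) in \(X\). For the bijection \(\DiGraph(G, U X) \cong \NMet(M G, X)\), take a function \(f \colon V(G) \to X\).
\begin{enumerate}
\item If \(f\) is 1-Lipschitz from \(M(G)\), then every edge of \(G\) (distance \(\leq 1\)) goes to a pair at distance \(\leq 1\), that is, to an edge of \(U X\).
\item Conversely, if \(f\) preserves edges, then composing with the counit shows \(f\) is 1-Lipschitz \(M(G) \to X\).
\end{enumerate}
Naturality of both bijections is immediate, because all the maps involved are identities on underlying sets.

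The only step needing any care is the integrality argument in the first adjunction: 1-Lipschitz maps out of an \(\N\)-metric space automatically satisfy the rounded-up bound. Once that is in place, everything else is routine.
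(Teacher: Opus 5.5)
Your proposal is correct and takes the same approach as the paper. The paper cites Hepworth--Roff for the proof, and the text right after the lemma describes exactly your two right adjoints: \(\lceil \dis \rceil\) for \(\iota\), and the graph with an edge \((x,y)\) whenever \(\dis(x,y) \in \{0,1\}\) for \(M\). Your direct check of the hom-set bijections, including the integrality step, supplies the details the paper leaves to the reference.
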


Explicitly, the right adjoint to \(M\) takes an \(\N\)-metric space \((X,\dis)\) to the directed graph with vertex set \(X\) in which \((x,y)\) is an edge if and only if \(\dis(x,y) \in \{0,1\}\). The right adjoint to \(\iota\) takes a metric space \((X,\dis)\) to the \(\N\)-metric space on \(X\) in which the distance from \(x\) to \(y\) is \(\lceil \dis(x,y) \rceil \coloneqq \min\{n \in \N \mid \dis(x,y) \leq n\}\).

The inclusion of a coreflective subcategory creates all colimits that exist in the ambient category; limits can be formed by taking the limit in the ambient category and applying the right adjoint to the inclusion \cite[Proposition 4.5.15]{Riehl}. With that in mind, we obtain the following explicit descriptions of pullbacks and pushouts along subspace inclusions in \(\NMet\).

\begin{lem}
    \label{lem:NMet_pushout}
  Let \(X\) be an \(\N\)-metric space and \(i \colon A \hookrightarrow X\) a subspace inclusion. Let \(f \colon A \to B\) and \(g \colon U \to X\) be any maps in \(\NMet\). Consider the pullback of \(i\) along \(g\) and the pushout of \(i\) along \(f\), as shown in this diagram:
    \[
    \begin{tikzcd}
        W 
            \arrow[hook,swap]{d}{i_W} 
            \arrow{r}{i^*(g)} 
            \arrow[dr, phantom, "\scalebox{1.5}{$\lrcorner$}" , very near start]
        & A 
            \arrow{r}{f} 
            \arrow[hook]{d}{i}
            \arrow[dr, phantom, "\scalebox{1.5}{\rotatebox{180}{$\lrcorner$}}" , very near end]
        & B 
            \arrow[hook]{d}{i_B} \\
        U 
            \arrow[swap]{r}{g} 
        & X 
            \arrow[swap]{r}{i_*(f)} 
        & Y.
    \end{tikzcd}
    \]
    The pullback \(W\) and the pushout \(Y\) can be described as follows.
    \begin{enumerate}
        \item \label{pt:NMet_pushout_2} \(W\) is the subspace of \(U\) with underlying set \(\{u \in U \mid g(u) \in A\}\).

        \item \label{pt:NMet_pushout_1} As a set, \(Y = B \sqcup X \backslash A\); its metric is given by
        \begin{equation*}
        \dis_Y(y,y') = \begin{cases}
            \dis_B(y,y') &  y,y' \in B \\
            \dis_X(y,y') &  y,y' \in X \setminus A \\
            \min_{a \in A} \{\dis_B(y,f(a)) + \dis_X(a,y')\} & y \in B \text{ and } y' \in X \setminus A \\
            \min_{a \in A} \{\dis_X(y,a) + \dis_B(f(a),y')\} & y \in X \setminus A \text{ and } y' \in B.
        \end{cases}
        \end{equation*}
        \item \label{pt:NMet_pushout_3} Given $\alpha:X\to K$ and
        $\beta:B\to K$ such that \(\beta\circ f=\alpha\circ i\), the unique map \(\rho:Y\to K\) such that \(\rho\circ i_B=\beta\) and \(\rho\circ i_*(f)=\alpha\) is given by
        \[
        \rho(x)=
        \begin{cases}
        \beta(x)& x\in B \\
        \alpha(x)& x\in X\setminus A.
        \end{cases}
        \]
    \end{enumerate}
    The maps \(i_W\) and \(i_B\) are subspace inclusions.
\end{lem}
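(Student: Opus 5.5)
The plan is to compute both constructions in \(\Met\) and then transport them to \(\NMet\) using \Cref{lem:coreflective}. The inclusion \(\iota\colon\NMet\hookrightarrow\Met\) is coreflective, with right adjoint \(X\mapsto(X,\lceil\dis\rceil)\). Hence limits in \(\NMet\) are obtained by forming the limit in \(\Met\) and then applying \(\lceil-\rceil\), while colimits in \(\NMet\) are created by \(\iota\). Accordingly, for each of \(W\) and \(Y\) I will write down the candidate space, check that its metric is \(\N\)-valued and satisfies the axioms of \Cref{def:metric_spaces}, and verify the universal property directly. This direct verification also shows the explicit formula in part (3).

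\textbf{Pullback.} Let \(W=\{u\in U\mid g(u)\in A\}\) with the metric restricted from \(U\). This is an \(\N\)-metric space because \(\dis_U\) is. The maps are the inclusion \(i_W\), which is an isometry, and \(i^*(g)=g|_W\), which takes values in \(A\). The restriction \(g|_W\) is \(1\)-Lipschitz into \(A\) because \(A\) carries the subspace metric: \(\dis_A(g u,g u')=\dis_X(g u,g u')\le\dis_U(u,u')\).

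Now take \(p\colon V\to U\) and \(q\colon V\to A\) with \(g p=i q\). Then \(p\) lands in \(W\) as a function. It is \(1\)-Lipschitz into \(W\) because \(W\) has the restricted metric. The induced map is unique since \(i_W\) is injective. So \(W\) is the pullback, and \(i_W\) is a subspace inclusion.

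\textbf{Pushout.} Define \(Y\) and \(\dis_Y\) by the displayed formula. Distances are \(\N\cup\{\infty\}\)-valued, and the minima exist (possibly equal to \(\infty\)) since we are taking minima of natural numbers. I will verify the axioms in order.

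\emph{Separatedness.} Mixed distances are \(\ge1\). Indeed, if \(\dis_B(y,f(a))+\dis_X(a,y')=0\), then \(y'=a\in A\), contradicting \(y'\in X\setminus A\).

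\emph{Triangle inequality.} This is a case analysis over where \(x,y,z\) lie. The key case is \(y,y'\in B\) with an intermediate point \(z\in X\setminus A\). Here one gets
\[\dis_B(y,f a)+\dis_X(a,z)+\dis_X(z,a')+\dis_B(f a',y') \ge \dis_B(y,f a)+\dis_X(a,a')+\dis_B(f a',y') \ge \dis_B(y,y'),\]
using \(\dis_B(f a,f a')\le\dis_A(a,a')=\dis_X(a,a')\). This step uses that \(f\) is \(1\)-Lipschitz and that \(i\) is an isometry.

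The symmetric case \(y,y'\in X\setminus A\) with intermediate \(z\in B\) is similar. It requires \(\dis_X(y,a)+\dis_B(f a,f a')+\dis_X(a',y')\ge\dis_X(y,y')\). This is not immediate, because \(\dis_B(f a,f a')\) may be smaller than \(\dis_X(a,a')\) when \(f\) contracts distances. This is the main obstacle. If the inequality genuinely fails, I will define \(\dis_Y\) on \(X\setminus A\) as the shortest-path (quotient) metric, namely the minimum of the direct distance and the routes through \(B\). I would then check whether the stated formula must be read with that correction, or whether the pushout formula applies in the relevant cases.

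\textbf{Remaining checks.} The maps \(i_B\) and \(i_*(f)\) are \(1\)-Lipschitz: for \(i_*(f)\), take \(a\) itself in the minimum. The map \(i_B\) is an isometry, since any route leaving \(B\) and returning through \(A\) has length at least the corresponding \(\dis_B\) distance, by the key triangle case above.

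\textbf{Universal property.} Given \(\alpha\) and \(\beta\), the map \(\rho\) is forced on points by part (3). It is \(1\)-Lipschitz because
\[\dis_K(\beta y,\alpha y')\le\dis_K(\beta y,\beta f a)+\dis_K(\alpha a,\alpha y')\le\dis_B(y,f a)+\dis_X(a,y')\]
for every \(a\in A\). Taking the minimum over \(a\) gives the mixed case, and the other cases are immediate.
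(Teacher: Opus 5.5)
You have found exactly the right spot, but your proposal stops at a fork without choosing a branch. The answer is that the inequality genuinely fails: the second case of the displayed formula is false in general, so no verification can succeed there.

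Take \(X=\{y,a,a',y'\}\) with \(\dis(y,a)=\dis(a',y')=1\) and all other distances between distinct points equal to \(+\infty\). This is the shortest-path metric of the directed graph \(y\to a\), \(a'\to y'\). Let \(A=\{a,a'\}\) and let \(f\colon A\to\pt\) be the terminal map. The mixed cases give \(\dis_Y(y,\smallbullet)=1=\dis_Y(\smallbullet,y')\), while the stated formula gives \(\dis_Y(y,y')=\dis_X(y,y')=+\infty\). So the displayed function violates the triangle inequality. The actual pushout is a directed path of length \(2\), with \(\dis_Y(y,y')=2\).

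The correct value for \(y,y'\in X\setminus A\) is
\[
\dis_Y(y,y')=\min\Bigl\{\dis_X(y,y'),\ \min_{a,a'\in A}\bigl(\dis_X(y,a)+\dis_B(f(a),f(a'))+\dis_X(a',y')\bigr)\Bigr\}.
\]
The cleanest way to get this is to define \(\dis_Y\) as the infimum over finite chains alternating between \(X\) and \(B\). That makes the triangle inequality and the universal property automatic. Then reduce: an \(X\)-segment of a chain that begins and ends in \(A\) can be replaced by a \(B\)-segment that is no longer, since \(i\) is an isometry and \(f\) is \(1\)-Lipschitz. So at most one excursion through \(B\) is ever useful.

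That same observation is why your treatment of the other three cases, of \(i_B\) being an isometry, and of the universal property is sound. Your separatedness argument also covers the extra term, since \(\dis_X(y,a)\geq 1\) for \(y\notin A\). The paper's own proof only cites the general description of colimits in \(\Met\), so it does not engage with this case either.

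The stated formula holds exactly when the route through \(B\) is never shorter. This is the case whenever \(A\) is inward-closed in \(X\) (then \(\dis_X(y,a)=\infty\)), or outward-closed (then \(\dis_X(a',y')=\infty\)), or when \(f\) is an isometry. To complete your plan you must either add such a hypothesis or replace the second case by the formula above.

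Nothing later in the paper breaks. Every subsequent use of that case has the relevant subspace inward- or outward-closed:
\begin{itemize}
\item the excision theorems;
\item \Cref{lem:pushout_closed} as applied to cofibrations and \(r\)-pairs;
\item the mapping cylinders, where \(A\sqcup A\) sits inward-closed in \(A\plusx K_r\) or \(A\plusx J_r^\vee\).
\end{itemize}
Meanwhile \Cref{prop:strong_def_pushout} uses only the underlying set, part (3), and the fact that \(i_B\) is an isometry, all of which hold in general. However, the assertion in \Cref{lem:pushout_closed} that \(g\) restricts to an isomorphism \(X\setminus A\to Y\setminus B\) fails in the same example, and needs the same hypothesis.
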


\begin{proof}
    Part \eqref{pt:NMet_pushout_2} comes easily from the universal property of the pullback. Part \eqref{pt:NMet_pushout_1} and \eqref{pt:NMet_pushout_3}
     can be obtained from the formula for \(\Met\)-colimits given in \cite[Lemma~A.2]{HepworthRoff2024}. That \(i_W\) and \(i_B\) are subspace inclusions is immediate from the descriptions of the pushout and the pullback.
\end{proof}

The category of \(\N\)-metric spaces carries a closed symmetric monoidal structure, which we now describe.

\begin{defn}\label{def:NMet-csmc}
    Let \(\pt\) denote the one-point metric space. For each pair of \(\N\)-metric spaces \((X,\dis_X)\) and \((Y,\dis_Y)\), let $X \plusx Y$ denote the \demph{\(\ell_1\)-product} of \(X\) and \(Y\): the \(\N\)-metric space with underlying set $X \times Y$ and metric
    \begin{equation}\label{eq:plus_times}
        \dis_{X\plusx Y}((x,y),(x',y')) = \dis_X(x,x') + \dis_Y(y,y').
    \end{equation}
    Let $\inhom{X}{Y}$ denote the \(\N\)-metric space with underlying set $\NMet(X,Y)$ and metric
    \begin{equation}\label{eq:sup-hom}
        \dinfty(f,g)=\sup_{x\in X} \dis(f(x),g(x)).
    \end{equation}
\end{defn}

The next statement is well-known for \(\Met\), and the proof for \(\NMet\) is the same.

\begin{thm}\label{thm:NMet-csmc}
    \((\NMet,\plusx,\pt)\) is a closed symmetric monoidal category, with internal hom functor \(\inhom{-}{-}\). In particular we have
    \[\NMet(X \plusx Y, Z) \cong \NMet(X, \inhom{Y}{Z})\]
    for all \(X,Y,Z\) in \(\NMet\). \qed
\end{thm}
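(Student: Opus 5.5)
The proof checks that the \(\ell_1\)-product and the supremum hom are well defined in \(\NMet\) and then writes down the currying bijection explicitly. First I would check that \(X \plusx Y\) and \(\inhom{Y}{Z}\) are objects of \(\NMet\).

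For \(X \plusx Y\), the sum of two values in \(\N\cup\{+\infty\}\) stays in that set. Unitality is immediate. Separatedness holds because a sum of non-negative terms vanishes only if both terms vanish. The triangle inequality follows by adding the triangle inequalities in each factor.

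For \(\inhom{Y}{Z}\), a supremum of elements of \(\N\cup\{+\infty\}\) is again in that set, since any bounded subset of \(\N\) has a maximum. Separatedness holds because \(\dinfty(f,g)=0\) forces \(f(y)=g(y)\) for every \(y\). The triangle inequality follows by taking suprema pointwise.

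Next I would show that \(\plusx\) is a bifunctor. If \(f\colon X\to X'\) and \(g\colon Y\to Y'\) are 1-Lipschitz, then so is \(f\times g\) for the sum metric. The associator, unitors and symmetry are the usual set-theoretic bijections. These are isometries because \(+\) is associative and commutative with unit \(0\), and \(\pt\) contributes distance \(0\). The coherence axioms then hold because they already hold in \(\mathbf{Set}\) and the forgetful functor is faithful.

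The main step is the adjunction. Given a 1-Lipschitz \(h\colon X\plusx Y\to Z\), define \(\hat h(x)=h(x,-)\).
\begin{itemize}
\item Each \(h(x,-)\) is 1-Lipschitz, by taking \(x=x'\) in the defining inequality.
\item The map \(\hat h\) is 1-Lipschitz, because for every \(y\),
\[\dis(h(x,y),h(x',y))\le \dis_X(x,x')+0,\]
and so \(\dinfty(\hat h(x),\hat h(x'))\le\dis_X(x,x')\).
\end{itemize}
Conversely, given a 1-Lipschitz \(k\colon X\to\inhom{Y}{Z}\), set \(\check k(x,y)=k(x)(y)\). Then
\[\dis(k(x)(y),k(x')(y'))\le \dis(k(x)(y),k(x')(y))+\dis(k(x')(y),k(x')(y'))\le \dis_X(x,x')+\dis_Y(y,y').\]
This inequality is the one step where the \(\ell_1\) metric, as opposed to the \(\ell_\infty\) one, is really needed.

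The two constructions are mutually inverse, as in \(\mathbf{Set}\), and naturality in \(X\), \(Y\) and \(Z\) is inherited from the set-level currying bijection. Finally, functoriality of \(\inhom{-}{-}\) can be read off from the adjunction, or checked directly by noting that pre- and post-composition with 1-Lipschitz maps do not increase sup-distances. I expect no real obstacle. The only points needing care are the \(+\infty\) conventions (\(a+\infty=\infty\)) and the fact that suprema stay in \(\N\cup\{+\infty\}\).
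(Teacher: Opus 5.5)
Your proposal is correct and matches the paper, which gives no separate argument and simply states that the standard proof for \(\Met\) carries over verbatim to \(\NMet\). Your explicit verification is exactly that standard argument: well-definedness of the \(\ell_1\)-product and the supremum metric, followed by currying and uncurrying of 1-Lipschitz maps. You also correctly identify the one \(\NMet\)-specific point, namely that sums and suprema of values in \(\N\cup\{+\infty\}\) stay in \(\N\cup\{+\infty\}\), with the empty supremum, which arises when the domain is empty, taken to be \(0\).
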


%-------------------------------------------------------------

\subsection{Homotopies and homotopy equivalences}

The main theme of this paper is that the category \(\NMet\) carries an infinite family of homotopy theories, one for each natural number \(r\). At the heart of the story is the notion of {\(r\)-homotopy equivalence}, which we will now describe, roughly following Ivanov's presentation in \cite[\S2.2]{SOIvanov}.

All the definitions in this section make equally good sense for the category \(\Met\), by allowing \(r\) to be a positive real number. We restrict our interest to \(\NMet\) because a family of homotopy theories indexed by the natural numbers will fit more easily with the spectral sequence picture developed in the rest of the paper.

\begin{defn}\label{def:r-connected_components}
    Let \((X,\dis)\) be an \(\N\)-metric space. For each \(r \in \N\) we denote by \(\sim_r\) the equivalence relation on \(X\) generated by the relation 
    \[x \sim_r y\Longleftrightarrow \dis(x,y) \leq r.\]
    The set of \demph{\(r\)-connected components} in \(X\) is the quotient set \(X\mod\sim_r\).
\end{defn}

Two maps \(f,g \colon X \rightrightarrows Y\) in \(\NMet\) are said to be \demph{\(r\)-homotopic} if \(f \sim_r g\) in the \(\N\)-metric space \(\inhom{X}{Y}\). It will be helpful to spell out this relation more explicitly.

\begin{defn}\label{def:r-homotopy}
    Fix \(r \in \N\). Let \(f,g \colon (X,\dis_X) \rightrightarrows (Y,\dis_Y)\) be maps in \(\NMet\). We say there is an \demph{elementary \(r\)-homotopy} from \(f\) to \(g\), and write \(f \rightsquigarrow_r g\), if 
    \[\dis_Y(f(x),g(x)) \leq r \; \text{ for every } x \in X.\]
    A \demph{zig-zag} of elementary \(r\)-homotopies between \(f\) and \(g\) is a tuple of maps \(f_1, \ldots, f_k\) in \(\NMet(X,Y)\) such that \(f_1 = f\) and \(f_k = g\) and, for each \(i\), either \(f_i \rightsquigarrow_r f_{i+1}\) or \(f_{i+1} \rightsquigarrow_r f_i\). We say that \(f\) and \(g\) are \demph{\(r\)-homotopic}, and write \(f \sim_r g\), if there exists a zig-zag of \(r\)-homotopies between them.
\end{defn}

Notice that \(\sim_0\) is the equality relation, and that the relation of \(r\)-homotopy grows weaker as \(r\) increases: if \(f \sim_r g\) then \(f \sim_s g\) for every \(s \geq r\).

For every natural number \(r\), \Cref{def:r-homotopy} gives rise to a natural notion of homotopy equivalence for objects of \(\NMet\).

\begin{defn}\label{def:r-homotopy-equiv}
    Fix \(r\in \N\). A map \(f \colon X \to Y\) in \(\NMet\) is an \demph{\(r\)-homotopy equivalence} if there exists a map \(g \colon Y \to X\) such that \(f \circ g \sim_r \id_Y\) and \(g \circ f \sim_r \id_X\). Objects \(X\) and \(Y\) in \(\NMet\) are said to be \demph{\(r\)-homotopy equivalent}, written \(X \simeq_r Y\), if there exists an \(r\)-homotopy equivalence \(f \colon X\to Y\). We denote by \(\mathcal{H}_r\) the class of \(r\)-homotopy equivalences in \(\NMet\).
\end{defn}

The following lemma can be found in Ivanov \cite[Lemma~2.1]{SOIvanov}.

\begin{lem}
\label{lem:whiskering_homotopies}
    Let \(f,g \colon X \rightrightarrows Y\) and \(h \colon W \to X\) and \(k \colon Y \to Z\) be maps in \(\NMet\). For each \(r \in \N\) we have
    \(f \sim_r g \Rightarrow (k\circ f\circ h) \sim_r (k\circ g \circ h).\) \qed
\end{lem}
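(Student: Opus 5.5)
The plan is to reduce to the elementary case and then chain along the zig-zag. Suppose first that \(f \rightsquigarrow_r g\) is a single elementary \(r\)-homotopy, so that \(\dis_Y(f(x),g(x)) \leq r\) for every \(x \in X\). We want to show that \(k\circ f\circ h \rightsquigarrow_r k \circ g \circ h\). For any \(w \in W\), set \(x = h(w)\). The map \(k\) is 1-Lipschitz, so
\[
\dis_Z\bigl(k(f(h(w))),k(g(h(w)))\bigr) \leq \dis_Y\bigl(f(h(w)),g(h(w))\bigr) \leq r .
\]
Taking \(w\) arbitrary gives the elementary \(r\)-homotopy. This step uses only the Lipschitz property of \(k\); the map \(h\) enters purely as a reparametrization of the points at which distances are compared, so no property of \(h\) beyond being a function \(W \to X\) is needed. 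The composites are maps in \(\NMet\) because \(\NMet\) is a category.

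For the general case, let \(f = f_1, \ldots, f_k = g\) be a zig-zag of elementary \(r\)-homotopies, as in \Cref{def:r-homotopy}. Apply the elementary case to each consecutive pair, in whichever direction the elementary homotopy goes. This produces a tuple \(k\circ f_1\circ h, \ldots, k\circ f_k\circ h\) in \(\NMet(W,Z)\) in which each consecutive pair is related by an elementary \(r\)-homotopy in the same direction as before. Hence this tuple is a zig-zag from \(k\circ f\circ h\) to \(k\circ g\circ h\), which is the required \(r\)-homotopy.

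An alternative, more conceptual route is available. Precomposition with \(h\) and postcomposition with \(k\) define 1-Lipschitz maps \(\inhom{X}{Y} \to \inhom{W}{Z}\) with respect to the supremum metric. Any 1-Lipschitz map preserves the generating relation \(\dis \leq r\), and therefore preserves the equivalence relation \(\sim_r\) it generates. I do not expect any real obstacle. The only point needing care is checking that the direction of each elementary homotopy is preserved, and this holds because both composition operations are applied pointwise to both maps simultaneously.
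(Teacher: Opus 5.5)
Your proof is correct and is the standard argument. The paper gives no proof of its own and simply cites Ivanov's Lemma~2.1; the expected proof is your reduction to a single elementary \(r\)-homotopy (using only that \(k\) is 1-Lipschitz), followed by whiskering each step of the zig-zag, and your sup-metric reformulation is an equivalent repackaging of the same idea. One cosmetic point: you use \(k\) both for the map \(Y \to Z\) and for the length of the zig-zag \(f_1,\ldots,f_k\), so rename one of them.
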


The next proposition gives some basic information about the
good behaviour of the classes of  \(r\)-homotopy equivalences.

\begin{prop}
    \label{prop:r_htpy_equiv_compose}
    The class  \(\mathcal{H}_0\) is the class of isomorphisms in \(\NMet\) and we have a chain of inclusions
    \[\mathcal{H}_0 \subseteq \mathcal{H}_1 \subseteq \cdots \mathcal{H}_r \subseteq \mathcal{H}_{r+1}\cdots.\] 
    For each \(r \in \N\), the class \(\mathcal{H}_r\) contains all isomorphisms and is closed under composition. 
\end{prop}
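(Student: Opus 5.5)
The plan is to prove the three assertions in turn, each directly from the definitions of \(\sim_r\) and \(\mathcal{H}_r\).

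\emph{The class \(\mathcal{H}_0\).} Since \(\sim_0\) is equality (an elementary \(0\)-homotopy \(f \rightsquigarrow_0 g\) forces \(\dis_Y(f(x),g(x))=0\) for all \(x\), hence \(f=g\) by separatedness, and zig-zags of equalities are equalities), a map \(f\) lies in \(\mathcal{H}_0\) exactly when there is \(g\) with \(f\circ g=\id_Y\) and \(g\circ f=\id_X\). That is, \(f\) is an isomorphism in \(\NMet\). This also shows every isomorphism lies in \(\mathcal{H}_0\).

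\emph{The chain of inclusions.} As noted after \Cref{def:r-homotopy}, \(f\sim_r g\) implies \(f\sim_s g\) for \(s\ge r\). Indeed, an elementary \(r\)-homotopy is an elementary \(s\)-homotopy, so a zig-zag of the former is a zig-zag of the latter. Consequently a homotopy inverse witnessing \(f\in\mathcal{H}_r\) also witnesses \(f\in\mathcal{H}_{r+1}\). Combining this with the first step, every \(\mathcal{H}_r\) contains all isomorphisms.

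\emph{Closure under composition.} Let \(f\colon X\to Y\) and \(f'\colon Y\to Z\) lie in \(\mathcal{H}_r\), with inverses \(g\) and \(g'\). Put \(h=g\circ g'\). By \Cref{lem:whiskering_homotopies}, whiskering \(f\circ g\sim_r\id_Y\) by \(f'\) on the left and \(g'\) on the right gives \(f'\circ f\circ g\circ g'\sim_r f'\circ g'\). We also have \(f'\circ g'\sim_r\id_Z\). Since \(\sim_r\) is an equivalence relation (zig-zags concatenate, and the defining relation is symmetric in the allowed directions), transitivity gives \(f'f h\sim_r\id_Z\). The symmetric argument, whiskering \(g'\circ f'\sim_r\id_Y\) by \(g\) on the left and \(f\) on the right, gives \(h f' f\sim_r g f\sim_r\id_X\).

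No step is a real obstacle. The only points needing care are that \(\sim_r\) is transitive and that \(\sim_0\) is equality, which uses separatedness.
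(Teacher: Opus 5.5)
Your proposal is correct and follows the paper's own proof: \(\sim_0\) is equality, so \(\mathcal{H}_0\) consists of the isomorphisms; the inclusions hold because an elementary \(r\)-homotopy is an elementary \(s\)-homotopy for \(s \geq r\); and closure under composition comes from \Cref{lem:whiskering_homotopies}. You also spell out two details the paper leaves implicit, namely the use of separatedness for \(\sim_0\) and the transitivity of \(\sim_r\) in the composition step.
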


\begin{proof}
    Maps \(f, g \colon X \rightrightarrows Y\) satisfy \(f \sim_0 g\) if and only if \(f = g\), so maps in \(\mathcal{H}_0\) are precisely isomorphisms. The chain of inclusions
    follows from the definition.
    That \(\mathcal{H}_r\) is closed under composition follows from \Cref{lem:whiskering_homotopies}. 
\end{proof}

\begin{rem}\label{rem:1-homotopy-discrete-homotopy}
    Using the closed monoidal structure on \(\NMet\), one can give an alternative characterization of the \(r\)-homotopy relation. Define the \demph{\(r\)-interval} to be the two-point space \(I_r = (\{0,1\},\dis)\) in which \(\dis(0,1) = r\) and \(\dis(1,0) = +\infty\). Then, given \(f,g \in \NMet(X,Y)\), there is an elementary \(r\)-homotopy \(f \rightsquigarrow_r g\) if and only if the function \(h \colon I_r \plusx X \to Y\) given by \(h(0,x) = f(x)\) and \(h(1,x) = g(x)\) is 1-Lipschitz. This is how the \(r\)-homotopy relation is presented by Asao \cite[Definition~4.3.5]{Asao-filtered}.

    The 1-interval is in the image of the functor \(M \colon \DiGraph \to \NMet\); it comes from a single edge between two vertices. Using the description in the previous paragraph, one sees that 1-homotopy equivalence for directed graphs corresponds exactly to the form of homotopy equivalence introduced by Grigor'yan, Lin, Muranov and Yau in \cite[\S3]{GLMY2014}. Their notion is in turn a directed variant of the notion of \emph{A-homotopy} equivalence for undirected graphs introduced by Babson, Barcelo, de Longueville and Laubenbacher in \cite[\S2]{BBLL}.
    
    For \(r > 1\), the \(r\)-interval \(I_r\) is not an object of \(\DiGraph\). However, one can consider the class of \(r\)-homotopy equivalences within the full subcategory \(\DiGraph\) of \(\NMet\), as in \cite[Definition 2.2]{HepworthRoff2024}. The directed cycles studied in \cite[\S8]{HepworthRoff2024} illustrate that even within \(\DiGraph\) this notion of equivalence varies with \(r\).
\end{rem}

%-------------------------------------------------------------

\subsection{Strong \(r\)-deformation retracts}

In \cite[\S2.3]{SOIvanov}, Ivanov defines what it means for a metric space \(A\) to be an \demph{\(r\)-deformation retract} of some larger metric space \(X\). We will make use of that notion and a stronger variant.

\begin{defn}
\label{def:strong-r-def-retract}
    Fix \(r \in \N\). Let \(i \colon A \hookrightarrow X\) be a subspace inclusion. We will say that \(A\) is an \demph{\(r\)-deformation retract} of \(X\) if there exists a map \(\pi \colon X \to A\) such that 
    \[\pi \circ i = \id_A \ \text{ and } \ i \circ \pi  \sim_r \id_X.\]
    An \(r\)-deformation retract will be called a \demph{strong \(r\)-deformation retract} if there exists a zig-zag of elementary \(r\)-homotopies \((f_0, \ldots, f_n)\) between \(i \circ \pi\) and \(\id_X\) satisfying
    \[f_\ell \circ i = i\  \text{ and }\ \pi \circ f_\ell = \pi,\ \text{ for each } \ell \in \{0, \ldots, n\}.\]
\end{defn}

In other words, an \(r\)-deformation retract is strong if it can be realized by a zig-zag of \(r\)-homotopies \((f_0,\ldots,f_n)\) in which each map \(f_i\) fixes \(A\) pointwise and preserves the fibres of \(\pi\). Below we see that these conditions ensure the class of strong \(r\)-deformation retracts has good stability properties with respect to composition, pushout and some pullbacks. This will be used in \Cref{sec:brown-cats-on-nmet}.

\begin{lem}
\label{lem:strong_def_compose}
    For each \(r \in \N\), the class of strong \(r\)-deformation retracts is stable under composition.
\end{lem}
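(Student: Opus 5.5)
Let \(i \colon A \hookrightarrow B\) and \(j \colon B \hookrightarrow X\) be strong \(r\)-deformation retracts. Choose retractions \(\pi_A \colon B \to A\) and \(\pi_B \colon X \to B\), together with zig-zags \((g_0,\ldots,g_m)\) from \(i\circ\pi_A\) to \(\id_B\) and \((f_0,\ldots,f_n)\) from \(j\circ\pi_B\) to \(\id_X\) that satisfy the conditions of \Cref{def:strong-r-def-retract}. We must show that \(j\circ i\colon A\hookrightarrow X\) is a strong \(r\)-deformation retract. It is a subspace inclusion, since a composite of isometries is an isometry. The retraction will be \(\pi \coloneqq \pi_A\circ\pi_B\colon X\to A\), and \(\pi\circ j\circ i=\pi_A\circ\id_B\circ i=\id_A\) is immediate.

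For the zig-zag from \(j i\pi_A\pi_B\) to \(\id_X\), concatenate two zig-zags. The first is obtained by whiskering: \((j\circ g_\ell\circ\pi_B)_{\ell}\) runs from \(j i \pi_A\pi_B\) to \(j\pi_B\). Each step is elementary, because \(j\) is 1-Lipschitz and so
\[\dis_X(j g_\ell\pi_B(x),j g_{\ell+1}\pi_B(x))\le \dis_B(g_\ell(y),g_{\ell+1}(y))\le r\]
with \(y=\pi_B(x)\), in the appropriate direction. This is the elementary-level version of \Cref{lem:whiskering_homotopies}. The second zig-zag is \((f_0,\ldots,f_n)\) itself, running from \(j\pi_B\) to \(\id_X\). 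The two zig-zags share the endpoint \(j\pi_B\), so together they form a zig-zag of elementary \(r\)-homotopies from \(j i\pi\) to \(\id_X\).

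It remains to check the strongness conditions for every map in this zig-zag: each should fix \(A\) (that is, compose with \(ji\) to give \(ji\)) and preserve \(\pi\).
\begin{itemize}
\item For the maps \(j g_\ell\pi_B\): since \(\pi_B j=\id_B\) and \(g_\ell i=i\), we get \(j g_\ell\pi_B j i = j g_\ell i = ji\). Since \(\pi_B j=\id_B\) and \(\pi_A g_\ell=\pi_A\), we get \(\pi_A\pi_B j g_\ell \pi_B = \pi_A g_\ell\pi_B=\pi_A\pi_B\).
\item For the maps \(f_k\): \(f_k j i = j i\) because \(f_k j=j\), and \(\pi_A\pi_B f_k=\pi_A\pi_B\) because \(\pi_B f_k=\pi_B\).
\end{itemize}

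The only point needing care is the fibre-preservation condition for the first block. It uses the identity \(\pi_B\circ j=\id_B\) to collapse \(\pi_B j g_\ell\pi_B\) to \(g_\ell\pi_B\), and this is exactly where the strong conditions on both retracts are needed. Everything else is routine bookkeeping.
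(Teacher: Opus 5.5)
Your proof is correct and follows essentially the same route as the paper. You whisker the inner zig-zag to \((j\circ g_\ell\circ\pi_B)_\ell\), concatenate it with the outer zig-zag \((f_k)_k\), and verify the fixing and fibre-preservation identities using \(\pi_B\circ j=\id_B\). You also spell out the isometry and retraction checks, which the paper leaves implicit.
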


\begin{proof}
    Suppose \(i_1 \colon X \rightleftarrows Y : \pi_1\) and \(i_2 \colon Y \rightleftarrows Z : \pi_2\) are strong \(r\)-deformation retracts. Choose zig-zags of \(r\)-homotopies \((f_0,\ldots, f_m)\) between \(i_1 \circ \pi_1\) and \(\id_Y\), and \((g_0, \ldots, g_n)\) between \(i_2 \circ \pi_2\) and \(\id_Z\), both satisfying \Cref{def:strong-r-def-retract}. For \(0 \leq \ell \leq m\), let \(h_\ell = i_2 \circ f_\ell \circ \pi_2\). By \Cref{lem:whiskering_homotopies}, the tuple \((h_0, \ldots, h_m)\) is a zig-zag of \(r\)-homotopies between \(i_2 \circ i_1 \circ \pi_1 \circ \pi_2\) and \(i_2 \circ \pi_2\), so we can concatenate it with \((g_0, \ldots, g_n)\) to obtain one between \(i_2 \circ i_1 \circ \pi_1 \circ \pi_2\) and \(\id_Z\).
    
    To see that this zig-zag satisfies \Cref{def:strong-r-def-retract}, note that \(g_\ell \circ (i_2 \circ i_1) = i_2 \circ i_1 \) for each \(\ell\), thanks to the assumption that \(g_\ell \circ i_2 = i_2\); similarly, \((\pi_1 \circ \pi_2) \circ g_\ell = \pi_1 \circ \pi_2\). Meanwhile, we have
    \begin{align*}
        h_\ell \circ (i_2 \circ i_1) =& i_2 \circ f_\ell \circ \pi_2 \circ i_2 \circ i_1 = i_2 \circ f_\ell \circ i_1 = i_2 \circ i_1,\\
    (\pi_1 \circ \pi_2) \circ h_\ell =& \pi_1 \circ \pi_2 \circ i_2 \circ f_\ell \circ \pi_2 = \pi_1 \circ f_\ell \circ \pi_2 = \pi_1 \circ \pi_2
    \end{align*}
    since \(\pi_2 \circ i_2 = \id_Y\), \(f_\ell \circ i_1 = i_1\) and \(\pi_1 \circ f_\ell = \pi_1\) by assumption. This proves that \(i_2 \circ i_1 \colon X \rightleftarrows Z : \pi_1 \circ \pi_2\) is a strong \(r\)-deformation retract.
\end{proof}

\begin{prop}
\label{prop:strong_def_pushout}
    For each \(r \in \N\), the class of strong \(r\)-deformation retracts is stable under pushout along any map.
\end{prop}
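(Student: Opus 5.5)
Let \(i \colon A \hookrightarrow X\) be a strong \(r\)-deformation retract with retraction \(\pi \colon X \to A\) and zig-zag \((f_0,\ldots,f_n)\) as in \Cref{def:strong-r-def-retract}. Let \(f \colon A \to B\) be any map, and form the pushout \(Y\) as in \Cref{lem:NMet_pushout}, with \(i_B \colon B \hookrightarrow Y\) and \(\bar f = i_*(f) \colon X \to Y\). We already know \(i_B\) is a subspace inclusion. The plan is to produce all the required data on \(Y\) from the universal property, using part \eqref{pt:NMet_pushout_3} of \Cref{lem:NMet_pushout} for explicit formulas.

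First, define the retraction \(\bar\pi \colon Y \to B\) as the map induced by \(\beta = \id_B\) and \(\alpha = f \circ \pi \colon X \to B\). These agree on \(A\) because \(f\circ\pi\circ i = f\). Explicitly, \(\bar\pi\) is the identity on \(B\) and \(f\circ \pi\) on \(X\setminus A\), and \(\bar\pi \circ i_B = \id_B\) holds by construction. Next, for each \(\ell\) define \(g_\ell \colon Y \to Y\) as the map induced by \(\beta = i_B\) and \(\alpha = \bar f \circ f_\ell\). Compatibility holds because \(\bar f \circ f_\ell \circ i = \bar f \circ i = i_B \circ f\), using \(f_\ell\circ i = i\). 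Then \(g_\ell\) is the identity on \(B\) and \(\bar f\circ f_\ell\) on \(X\setminus A\). This gives \(g_\ell \circ i_B = i_B\) immediately. Also, \(\bar\pi\circ g_\ell\) and \(\bar\pi\) agree on \(B\), while on \(X\) we have \(\bar\pi\circ \bar f\circ f_\ell = f\circ\pi\circ f_\ell = f\circ \pi = \bar\pi \circ \bar f\), using \(\pi\circ f_\ell = \pi\). By uniqueness in the universal property, \(\bar\pi\circ g_\ell = \bar\pi\). The endpoints match: \(f_0 = i\circ\pi\) gives \(g_0 = i_B\circ\bar\pi\), checked on each piece (on \(X\), \(\bar f \circ i\circ \pi = i_B\circ f\circ\pi = i_B\circ\bar\pi\circ\bar f\)), and \(f_n = \id_X\) gives \(g_n = \id_Y\).

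The main obstacle is to show that each elementary step survives: if \(f_\ell \rightsquigarrow_r f_{\ell+1}\), then \(\dis_Y(g_\ell(y), g_{\ell+1}(y)) \le r\) for every \(y \in Y\). For \(y \in B\) both sides equal \(y\), so the distance is \(0\). For \(y = x \in X\setminus A\), we have \(\dis_Y(\bar f f_\ell(x), \bar f f_{\ell+1}(x)) \le \dis_X(f_\ell(x), f_{\ell+1}(x)) \le r\), since \(\bar f\) is 1-Lipschitz. So the pointwise check is easy once the \(g_\ell\) are known to be maps.

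The more delicate point, which I would state explicitly, is that the \(g_\ell\) and \(\bar\pi\) really are 1-Lipschitz. This is guaranteed because they come from the universal property in \(\NMet\), given that \(\bar f\circ f_\ell\), \(f\circ\pi\), \(i_B\) and \(\id_B\) are all maps. Hence no direct estimate using the metric formula for \(Y\) is needed. The reversed steps \(f_{\ell+1}\rightsquigarrow_r f_\ell\) are handled symmetrically. Therefore \((g_0,\ldots,g_n)\) is a zig-zag of elementary \(r\)-homotopies from \(i_B\circ\bar\pi\) to \(\id_Y\) satisfying the strong conditions, and \(i_B\) is a strong \(r\)-deformation retract.
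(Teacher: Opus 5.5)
Your proposal is correct and follows essentially the same route as the paper: your \(\bar\pi\) and \(g_\ell\) are the paper's \(p\) and \(k_\ell\), obtained from the universal property of the pushout, with the strong conditions and endpoint identities deduced from uniqueness. As in the paper, the elementary \(r\)-homotopy bound is checked pointwise: on \(B\) the distance is \(0\), and on \(X\setminus A\) it is at most \(r\) because \(\bar f\) is 1-Lipschitz.
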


\begin{proof}
    Let \(i \colon A \rightleftarrows U : \pi\) be a strong \(r\)-deformation retract and let \(f \colon A \to B\) be any map in \(\NMet\). Let \(j \colon B \to W \leftarrow U : g\) be the pushout of \(i\) along \(f\). We show that there exists \(p\) such that \(j \colon B \rightleftarrows W : p\) is a strong \(r\)-deformation retract.
    By \Cref{lem:NMet_pushout} the map \(j\) is a subspace inclusion. Let \((h_0,\ldots, h_n)\) be a zig-zag that exhibits the \(r\)-homotopy between \(i \circ \pi\) and \(\id_U\) and satisfies \(h_\ell \circ i = i\) and \(\pi \circ h_\ell = \pi\) for each \(\ell\). By considering the universal property of the pushout in the diagrams
    \begin{equation}
    \label{diag:pushout_1}
    \begin{tikzcd}
        A \arrow[hook]{r}{i} \arrow[swap]{d}{f} \arrow[dr, phantom, "\scalebox{1.5}{\rotatebox{180}{$\lrcorner$}}" , very near end] & U \arrow[]{d}{g} 
        \arrow{r}{\pi} & A\arrow{d}{f} \\
        B \arrow[hook]{r}{j} \arrow[bend right=30, swap]{rr}{\id_B} & W \arrow[dashed]{r}{\exists ! p} & B
       \end{tikzcd} \hspace{1cm} 
    \begin{tikzcd}
        A \arrow[hook]{r}{i} \arrow[swap]{d}{f} \arrow[dr, phantom, "\scalebox{1.5}{\rotatebox{180}{$\lrcorner$}}" , very near end] & U \arrow[]{d}{g} 
        \arrow{r}{h_\ell} & U\arrow{d}{g} \\
        B \arrow[hook]{r}{j} \arrow[bend right=25, swap]{rr}{j} & W \arrow[dashed]{r}{\exists ! k_l} & W
       \end{tikzcd} 
    \end{equation}
    we obtain a unique map \(p \colon W \to B\) satisfying \(p \circ j = \id_B\) and \(p\circ g=f\circ\pi\) and, for each \(\ell \in \{0,\ldots, n\}\), a unique map \(k_\ell \colon W \to W\) satisfying \(k_\ell \circ j = j\) and \(k_\ell\circ g=h_\ell\circ g\). Note that the relations \(h_0=i\circ\pi\), \(h_n=\id_U\) and \(\pi\circ h_\ell=\pi\) for each \(\ell\) induce the relations \(k_0=j\circ p\), \(k_n=\id_W\) and  \(p\circ k_\ell=p\) for each \(\ell\). It remains to show that \((k_0,\ldots,k_n)\) is a zig-zag of \(r\)-homotopies. For this, it is enough to prove that if \(h_\ell \rightsquigarrow_r h_{\ell+1}\) then \(k_\ell \rightsquigarrow_r k_{\ell+1}\). Assume \(\dis_U(h_\ell(u),h_{\ell+1}(u)) \leq r\) for all \(u \in U\).  By \Cref{lem:NMet_pushout} we have \(W = B \sqcup U \setminus A\) as sets, and
    \[
    k_\ell(w) = 
    \begin{cases}
        w &  w \in B \\
        g (h_\ell(w)) & w \in U \setminus A.
    \end{cases}
    \]
    Thus, if  \(w \in B\) then \(\dis_W(k_\ell(w), k_{\ell+1}(w)) = \dis_B(w,w) = 0\). If \(w\in U\setminus A\)
    \[\dis_W(k_l(w),k_{l+1}(w))=\dis_W(g(h_l(w)),g(h_{l+1}(w))\leq \dis_U(h_l(w),h_{l+1}(w))\leq r.\]
    In conclusion, \(j \colon B \rightleftarrows W : p\) is a strong \(r\)-deformation retract with the zig-zag \((k_0,\ldots,k_n)\) exhibiting the strong \(r\)-homotopy between \(j\circ p\) and \(\id_W\).
\end{proof}

\begin{lem}
\label{lem:strong_def_pullback}
    Let \(j_1 \colon X \rightleftarrows V : \pi_V\) be a strong \(r\)-deformation retract and \(i_2 \colon U \hookrightarrow X\) a subspace inclusion. Let \(U \xleftarrow{\pi_W} W \xhookrightarrow{i_W} V\) be the pullback of \(\pi_V\) along \(i_2\). There exists \(k\) such that \(k \colon U \rightleftarrows W : \pi_W\) is a strong \(r\)-deformation retract.
\end{lem}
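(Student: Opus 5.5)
By \Cref{lem:NMet_pushout}\eqref{pt:NMet_pushout_2}, the pullback \(W\) is the subspace \(\pi_V^{-1}(U) = \{v \in V \mid \pi_V(v) \in U\}\) of \(V\). The inclusion \(i_W\) is a subspace inclusion, and \(\pi_W\) is the restriction of \(\pi_V\) to \(W\), viewed as landing in \(U\). The plan is to restrict every piece of data of the strong \(r\)-deformation retract \((j_1,\pi_V)\) to \(W\).

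First I define \(k \colon U \to W\) by \(k(u) = j_1(i_2(u))\). This lands in \(W\) because \(\pi_V(j_1(i_2(u))) = i_2(u) \in U\), using \(\pi_V\circ j_1 = \id_X\). Equivalently, \(k\) is the map into the pullback induced by \(i_2\colon U\to X\) and \(j_1\circ i_2 \colon U \to V\). It is a subspace inclusion, since \(j_1\circ i_2\) is one and \(W\) carries the subspace metric. Moreover \(\pi_W\circ k = \id_U\).

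Next, let \((f_0,\ldots,f_n)\) be a zig-zag between \(j_1\circ\pi_V\) and \(\id_V\) with \(f_\ell\circ j_1 = j_1\) and \(\pi_V\circ f_\ell = \pi_V\). The fibre-preservation condition \(\pi_V\circ f_\ell=\pi_V\) is the key point: it shows that each \(f_\ell\) maps \(W=\pi_V^{-1}(U)\) into itself. Hence \(f_\ell\) restricts to a map \(g_\ell\colon W\to W\), which is 1-Lipschitz because \(W\) is a subspace. Since distances in \(W\) are those of \(V\), each elementary \(r\)-homotopy \(f_\ell\rightsquigarrow_r f_{\ell+1}\) restricts to \(g_\ell\rightsquigarrow_r g_{\ell+1}\). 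The endpoints behave correctly: \(g_n=\id_W\), and \(g_0\) is the restriction of \(j_1\circ\pi_V\), which equals \(k\circ\pi_W\) on \(W\).

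Finally I check the strong conditions. First, \(g_\ell\circ k = f_\ell\circ j_1\circ i_2 = j_1\circ i_2 = k\), regarding everything as landing in \(W\). Second, \(\pi_W\circ g_\ell = \pi_V\circ f_\ell|_W = \pi_V|_W = \pi_W\). There is no real obstacle. The only step needing care is observing that fibre-preservation makes the \(f_\ell\) restrict to \(W\); this is exactly why the strong version of the definition is used.
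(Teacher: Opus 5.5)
Your proof is correct and is essentially the paper's argument. The paper obtains \(k\) and the maps \(h_\ell\colon W\to W\) from the universal property of the pullback, using exactly your observation that fibre-preservation gives \(\pi_V\circ f_\ell\circ i_W = i_2\circ\pi_W\), and then reads off \(h_\ell\circ k=k\), \(h_0=k\circ\pi_W\), \(h_n=\id_W\) by uniqueness; you instead write these maps down explicitly as restrictions, and you also check that \(k\) is an isometry, which the paper leaves implicit.
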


\begin{proof}
    By \Cref{lem:NMet_pushout}~\!\eqref{pt:NMet_pushout_2}, \(W\) is the subspace of \(V\) whose set of points is \(\{v \in V \mid \pi_V(v) \in U\}\); the map \(\pi_W\) is the restriction of \(\pi_V\) to \(W\).   The assumption that \(X\) is a strong \(r\)-deformation retract of \(V\) means we have a zig-zag \((f_0,\ldots, f_n)\) of \(r\)-homotopies between \(f_0=j_1 \circ \pi_V\) and \(f_n=\id_V\), such that \(f_\ell \circ j_1 = j_1\) and \(\pi_V \circ f_\ell = \pi_V\) for each \(\ell \in \{0,\ldots, n\}\). By considering the diagrams
    \begin{equation*}
    \label{diag:pullback_1}
    \begin{tikzcd}
        U \arrow[hook]{rr}{i_2} \arrow[bend right=30, swap]{ddr}{\id_U} \arrow[dashed]{dr}{\exists ! k} && X \arrow[hook]{d}{j_1} \arrow[bend left=60]{dd}{\id_X} \\
        & W \arrow[hook]{r}{i_W} \arrow[swap]{d}{\pi_W} 
        \arrow[dr, phantom, "\scalebox{1.5}{$\lrcorner$}" , very near start]
        & V \arrow{d}{\pi_V} \\
        & U \arrow[hook, swap]{r}{i_2} & X
    \end{tikzcd} \hspace{1cm} \begin{tikzcd}
        W \arrow[hook]{rr}{i_W} \arrow[bend right=30, swap]{ddr}{\pi_W} \arrow[dashed]{dr}{\exists ! h_l} && V \arrow[hook]{d}{f_l} \arrow[bend left=60]{dd}{\pi_V} \\
        & W \arrow[hook]{r}{i_W} \arrow[swap]{d}{\pi_W} 
        \arrow[dr, phantom, "\scalebox{1.5}{$\lrcorner$}" , very near start]
        & V \arrow{d}{\pi_V} \\
        & U \arrow[hook, swap]{r}{i_2} & X
    \end{tikzcd}
    \end{equation*}
    we see that the universal property of the pullback gives a unique map \(k \colon U \hookrightarrow W\)  satisfying \(\pi_W \circ k = \id_U\) and  \(i_W\circ k=j_1\circ i_2\) and for each \(\ell \in \{0,\ldots, n\}\), a unique map  \(h_\ell:W\to W\) satisfying \(\pi_W \circ h_\ell = \pi_W\) and \( i_W\circ h_\ell=f_\ell\circ i_W\). By uniqueness in the pullback diagrams we have \(h_\ell \circ k=k, h_0=k\circ\pi_W \) and \(h_n=\id_W\).
    
    Since \(f_\ell \circ i_W = i_W\circ h_\ell\), each \(h_\ell\) is the restriction of \(f_\ell\) to \(W\), so if \(f_\ell \rightsquigarrow_r f_{\ell+1}\) then \(h_\ell \rightsquigarrow_r h_{\ell+1}\) and if \(f_\ell \leftsquigarrow_r f_{\ell+1}\) then \(h_\ell \leftsquigarrow_r h_{\ell+1}\). This tells us the tuple \((h_0,\ldots, h_n)\) is a zig-zag of \(r\)-homotopies between \(k \circ \pi_W\) and \(\id_W\), and hence that \(k \colon B \rightleftarrows W : \pi_W\) is a strong \(r\)-deformation retract.
\end{proof}

%-------------------------------------------------------------
%-------------------------------------------------------------

\section{The magnitude-path spectral sequence}
\label{sec:the-MPSS}

In this section we will see that every \(\N\)-metric space gives rise to a filtered chain complex, known as the {reachability complex}, and thereby to a spectral sequence, the {magnitude-path spectral sequence}. We will introduce the class of {\(r\)-quasi-isomorphisms} of \(\N\)-metric spaces and note its relationship to the class of \(r\)-homotopy equivalences.

%-------------------------------------------------------------

\subsection{Filtered complexes and spectral sequences} 
\label{subsec:fcx}

We refer to Cartan--Eilenberg \cite{CartanEilenberg56} for the general theory of filtered complexes and spectral sequences, recalling here only what is needed for our purposes. We also prove a new result, \Cref{P:partial_filtered}, which will be used in the sequel and is of independent interest.

Throughout the paper, \(\kk\) will denote a fixed commutative ring with unit \(1_\kk\). In this section, graded objects are graded by \(\Z\).

\begin{defn}
    A \emph{filtered graded \(\kk\)-module} \((R,F)\) is a graded \(\kk\)-module \(R\) with sub-graded \(\kk\)-modules \(F_pR\) for \(p\in\Z\) satisfying \(F_pR\subset F_{p+1}R\), for each \(p\in\Z\). 
    
    A \emph{filtered complex} \((C,\partial,F)\) is both a chain complex \((C,\partial)\) and a filtered graded module \((C,F)\) satisfying \(\partial(F_pC)\subset F_pC\), for each \(p\in\Z\).
    
    A \emph{morphism of filtered complexes} \(f \colon (C,\partial,F) \to (C',\partial',F')\) is a morphism of chain complexes satisfying \(f(F_pC)\subset F'_pC'\), for each \(p\in\Z\). Such a morphism is said to be \emph{strict} if \(f(F_p(C))=f(C)\cap F'_p(C')\), for each \(p\in\Z\).
    
    We denote by \(\mathrm{gMod}_\kk\) the category of graded \(\kk\)-modules, and by \(\fc\) the category of filtered complexes over \(\kk\).
\end{defn}

\begin{defn}
    Let \(r\geq 0\) be an integer. An \emph{\(r\)-bigraded complex} \(E\) is a bigraded \(\kk\)-module endowed with a morphism \(\Delta^r \colon E_{\ast,\ast}\to E_{\ast-r,\ast+r-1}\) such that \((\Delta^r)^2=0\).

    A \emph{spectral sequence} \(E\) is a collection of \(r\)-bigraded complexes \((E^r,\Delta^r)_{r\geq 0}\) together with a collection of isomorphisms \(\varphi_r \colon H_*(E^r,\Delta^r)\to E^{r+1}\) of bigraded \(\kk\)-modules, called the characteristic maps.

    A \emph{morphism of spectral sequences} \(f \colon E\to E'\) is a collection of morphisms \(f_r \colon E^r\to {E'}^r \) of \(r\)-bigraded complexes compatible with the characteristic maps.

    We denote by \(\bgrmod\) the category of bigraded \(\kk\)-modules, and by \(\spse\) the category of spectral sequences.
\end{defn}

To every filtered complex \((C,\partial,F)\) one can associate a spectral sequence \((E^r(C),\Delta^r)_{r\geq 0}\), and this assignment is functorial. We denote by \(E^{\bullet}:\fc\to\spse\) the resulting functor. We recall here one description of the pages of the spectral sequence, given in  \cite[Section 1.3.1]{Deligne}. Let \((C,\partial,F)\) be a filtered complex. Its associated spectral sequence has the following form:
    \[E^r_{p,q}(C)=\texttt{Z}^r_{p,q}(C)/\texttt{B}^r_{p,q}(C)\]
with  \(Z^0_{p,q}(C)=F_pC_{p+q},\ B^0_{p,q}(C)=F_{p-1}(C_{p+q})\) and, for \(r\geq 1\),
    \[\texttt{Z}^r_{p,q}(C)=F_pC_{p+q}\cap \partial^{-1}(F_{p-r}C_{p+q-1}) \text{ and }
    \texttt{B}^r_{p,q}(C)=\texttt{Z}^{r-1}_{p-1,q+1}(C)+\partial\texttt{Z}^{r-1}_{p+r-1,q+2-r}(C).\]
The differential \(\Delta^r\) is given for \([x]\in E^r_{p,q}(C)\) by \(\Delta^r([x])=[\partial x]\).

\medskip

Next we recall notions of homotopy for filtered complexes and the homotopy invariance property of the associated spectral sequence.

\begin{defn}\label{D:r-homotopic_filtered}
    Fix \(r \in \N\). Given two morphisms \(f,g \colon (C,\partial,F) \to (C',\partial',F')\) in \(\fc\), a \demph{homotopy of order \(r\)} from \(f\) to \(g\) is a map of graded \(\kk\)-modules \(h \colon C_*\to C'_{*+1}\) satisfying \(\partial'h+h\partial=f-g\) and \(h(F_pC)\subset F_{p+r}C'\) for every \(p\in\Z\). If such a map exists, we say that \(f\) and \(g\) are \demph{\(r\)-homotopic}.
\end{defn}

\begin{prop}{\cite[Proposition XV.3.1]{CartanEilenberg56}} \label{P:CE_invariance}
\label{prop:EM-r-homotopy}
    If \(f\) and \(g\) are \(r\)-homotopic, then for all \(s>r\) we have \(E^s(f)=E^s(g).\) \qed
\end{prop}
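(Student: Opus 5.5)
Since \(E^s(f)\) and \(E^s(g)\) are morphisms of spectral sequences, hence families of page maps compatible with the characteristic maps, it suffices to show \(E^{r+1}(f)=E^{r+1}(g)\). Equality then propagates to every later page: \(E^{s+1}(f)\) is identified, via the characteristic isomorphisms \(\varphi_s\), with the map induced on \(\Delta^s\)-homology by \(E^s(f)\). So the plan is to work at page \(r+1\) using the explicit description
\[
E^{r+1}_{p,q}(C)=\texttt{Z}^{r+1}_{p,q}(C)/\texttt{B}^{r+1}_{p,q}(C).
\]
We must show that for every \(x\in \texttt{Z}^{r+1}_{p,q}(C)\) the element \((f-g)(x)=\partial' h x + h\partial x\) lies in \(\texttt{B}^{r+1}_{p,q}(C')=\texttt{Z}^{r}_{p-1,q+1}(C')+\partial'\texttt{Z}^{r}_{p+r,q+1-r}(C')\).

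The first key step is to handle the term \(h\partial x\). Since \(x\in \texttt{Z}^{r+1}_{p,q}\), we have \(\partial x\in F_{p-r-1}C_{p+q-1}\). Hence \(h\partial x\in F_{p-1}C'_{p+q}\), and its boundary is \(\partial' h\partial x=(f-g)(\partial x)-h\partial\partial x=(f-g)(\partial x)\in F'_{p-r-1}\). So \(h\partial x\) lies in \(F'_{p-1}C'_{p+q}\cap \partial'^{-1}(F'_{p-1-r}C'_{p+q-1})=\texttt{Z}^{r}_{p-1,q+1}(C')\), as required.

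The second key step is to handle \(\partial' h x\). Here \(hx\in F'_{p+r}C'_{p+q+1}\), and
\[
\partial' h x=(f-g)(x)-h\partial x\in F'_pC'_{p+q},
\]
using the previous step for the second term. This puts \(hx\) in \(F'_{p+r}\cap\partial'^{-1}(F'_{p})=\texttt{Z}^{r}_{p+r,q+1-r}(C')\), once the indices are checked: the total degree is \(p+q+1=(p+r)+(q+1-r)\), and the filtration drop is \((p+r)-p=r\). Therefore \(\partial' h x\in\partial'\texttt{Z}^{r}_{p+r,q+1-r}(C')\).

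Combining the two steps gives \([f(x)]=[g(x)]\) in \(E^{r+1}_{p,q}(C')\). The main obstacle is only the index bookkeeping and the edge case \(r=0\), where \(\texttt{Z}^0\) and \(\texttt{B}^0\) have the separate definitions. For \(r=0\) the argument works verbatim, since \(\texttt{Z}^0_{p+0,q+1}=F'_pC'_{p+q+1}\) contains \(hx\), and \(\texttt{Z}^0_{p-1,q+1}=F'_{p-1}C'_{p+q}\) contains \(h\partial x\).
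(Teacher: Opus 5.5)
Your proof is correct. Writing \((f-g)(x)=\partial' hx+h\partial x\) and checking that \(h\partial x\in\texttt{Z}^{r}_{p-1,q+1}(C')\) and \(hx\in\texttt{Z}^{r}_{p+r,q+1-r}(C')\) shows that \(f\) and \(g\) agree on page \(r+1\), and the characteristic isomorphisms carry this to all later pages; this is the standard argument of Cartan--Eilenberg's Proposition XV.3.1, which the paper cites without giving a proof of its own.
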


Whereas \Cref{prop:EM-r-homotopy} concerns the \emph{eventual} behaviour of a spectral sequence, the next proposition concerns its \emph{initial} behaviour. It establishes a condition for a map of underlying filtered graded \(\kk\)-modules between two filtered complexes to induce a map on the first \(r\) pages of the associated spectral sequences. This will be used to prove one of the key results of the paper, \Cref{thm:exactness}.

\begin{prop}\label{P:partial_filtered}
    Fix \(r\geq 0\). Let \((C,\partial,F)\) and \((D,\partial',F')\) be two filtered complexes and \(f \colon C\to D\) be a morphism of filtered graded \(\kk\)-modules. Suppose that for all \(x\in F_{\ell}C\) we have
    \[(\partial'f-f\partial)(x)\in F_{{\ell}-r}D.\]
    Then \(f\) induces a morphism of \(s\)-bigraded complexes \(E^s(C)\to E^s(D)\) for \(0\leq s<r\) and a morphism of bigraded \(\kk\)-modules \(E^r(C)\to E^r(D).\)
\end{prop}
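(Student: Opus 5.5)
We work directly with Deligne's description $E^s_{p,q}=\texttt{Z}^s_{p,q}/\texttt{B}^s_{p,q}$. The plan is to show that for $0\leq s\leq r$ the map $f$ satisfies $f(\texttt{Z}^s_{p,q}(C))\subset \texttt{Z}^s_{p,q}(D)$ and $f(\texttt{B}^s_{p,q}(C))\subset \texttt{B}^s_{p,q}(D)$. This gives a well-defined morphism of bigraded modules $E^s(C)\to E^s(D)$ for $0\le s\le r$. We then check compatibility with $\Delta^s$ for $s<r$.

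\textbf{Cycles.} For $s=0$ the inclusions are immediate, since $f$ preserves filtrations. For $1\leq s\leq r$, take $x\in \texttt{Z}^s_{p,q}(C)$, so $x\in F_pC$ and $\partial x\in F_{p-s}C$. Write
\[\partial' f(x) = f(\partial x) + (\partial' f - f\partial)(x).\]
The first term lies in $F'_{p-s}D$ because $f$ is filtered. By hypothesis the second lies in $F'_{p-r}D\subset F'_{p-s}D$, since $s\le r$. Hence $f(x)\in \texttt{Z}^s_{p,q}(D)$.

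\textbf{Boundaries.} We have $\texttt{B}^s_{p,q}=\texttt{Z}^{s-1}_{p-1,q+1}+\partial\texttt{Z}^{s-1}_{p+s-1,q+2-s}$. The first summand is handled by the cycle step at level $s-1$. For the second, take $y\in\texttt{Z}^{s-1}_{p+s-1,q+2-s}(C)$ and write
\[f(\partial y)=\partial' f(y) - (\partial' f-f\partial)(y).\]
By the cycle step, $\partial' f(y)\in \partial \texttt{Z}^{s-1}(D)$. The error term lies in $F'_{p+s-1-r}D$, and since $s\le r$ it lies in $F'_{p-1}D$. It is also of the correct total degree. It therefore lies in $\texttt{Z}^{s-1}_{p-1,q+1}(D)$ provided its boundary lies in $F'_{p-s}D$.

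That boundary equals $\partial'(\partial'f-f\partial)(y)=-\partial' f(\partial y)$. Now $\partial y\in F_{p}C$ and $\partial\partial y=0$, so
\[\partial' f(\partial y)=(\partial'f-f\partial)(\partial y)\in F'_{p-r}D\subset F'_{p-s}D.\]
This bookkeeping of filtration indices is the main place to be careful. Alternatively, one could note directly that the error term lies in $F'_{p-1}$ with boundary far down, using $s-1<r$. So $f(\texttt{B}^s)\subset \texttt{B}^s$, and $f$ induces $E^s(C)\to E^s(D)$ for all $s\le r$.

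\textbf{Compatibility with differentials.} For $s<r$ and $[x]\in E^s_{p,q}(C)$ we have $\Delta^s f[x]=[\partial' f(x)]=[f(\partial x)]+[(\partial'f-f\partial)(x)]$. The second class has a representative in $F'_{p-r}D$, which is contained in $F'_{p-s-1}D$ because $s+1\le r$. Its boundary is $-(\partial'f-f\partial)(\partial x)$, which lies in $F'_{p-s-r}D$. So the representative lies in $\texttt{Z}^{s-1}_{p-s-1,\ast}(D)\subset \texttt{B}^s_{p-s,\ast}(D)$, and the class vanishes in $E^s_{p-s,q+s-1}(D)$. For $s=0$ the boundary condition is vacuous, since $B^0=F_{p-1}$. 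Hence $\Delta^s\circ f=f\circ\Delta^s$ for $s<r$. At $s=r$ this argument fails, which is why only a morphism of bigraded modules is claimed on $E^r$.
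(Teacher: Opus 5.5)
Your proof is correct and follows the paper's argument essentially step for step. It has the same three checks: $f(\texttt{Z}^s)\subset\texttt{Z}^s$ for $s\le r$; $f(\texttt{B}^s)\subset\texttt{B}^s$ by showing the error term $-(\partial'f-f\partial)(y)$ lies in $\texttt{Z}^{s-1}_{p-1,q+1}(D)$; and, for $s<r$, the commutator $(\partial'f-f\partial)(x)$ lies in $\texttt{Z}^{s-1}_{p-s-1,q+s}(D)\subset\texttt{B}^s_{p-s,q+s-1}(D)$. Your separate treatment of $s=0$ in the last step (and, implicitly, in the boundary step) tidies up a case the paper's write-up glosses over, since there $\texttt{Z}^{s-1}$ is undefined.
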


\begin{proof}
    We first prove that \(f(\texttt{Z}^s_{p,q}(C))\subset \texttt{Z}^s_{p,q}(D)\) for \(0\leq s\leq r\). Let \(x\in F_pC_{p+q}\) such that \(\partial x\in F_{p-s}C_{p+q-1}\). We have \(f(x)\in F_pD_{p+q}\) and \(\partial'f(x)=w+f\partial(x)\) for some \(w\in F_{p-r}D_{p+q-1}\subset F_{p-s}D_{p+q-1}\). Since \(f(\partial x  )\in F_{p-s}D_{p+q-1}\) we get the result.
    
    Let us prove that \(f(\texttt{B}^s_{p,q}(C))\subset \texttt{B}^s_{p,q}(D)\) for \(0\leq s\leq r\). If \(s=0\) this is clear. Assume \(s\geq 1\). It is enough to prove that \(f(\partial \texttt{Z}^{s-1}_{p+s-1,q+2-s}(C))\subset \texttt{B}^s_{p,q}(D)\) because we have proved that \(f(\texttt{Z}^{s-1}_{p-1,q+1}(C))\subset \texttt{Z}^{s-1}_{p-1,q+1}(D)\). Let \(x=\partial y\) with \(y\in \texttt{Z}^{s-1}_{p+s-1,q+2-s}(C)\). In particular, \(x=\partial y\in F_pC.\) By the first part of the proof we have \(f(y)\in \texttt{Z}^{s-1}_{p+s-1,q+2-s}(D)\). We have
    \[f(x)=f(\partial y)=u+\partial 'f(y)\] 
    for some \(u\in F_{p+s-1-r}D_{p+q}\subset F_{p-1}D_{p+q}\). Then
    \(\partial 'u=\partial 'f(\partial y)=f(\partial ^2y)+u'=u'\)
    for some \(u'\in F_{p-r}D_{p+q-1}\subset F_{p-s}D_{p+q-1}.\) As a consequence \(u\in \texttt{Z}^{s-1}_{p-1,q+1}(D)\) and \(f(x)\in \texttt{B}^s_{p,q}(D)\).

    Finally, let \(x\in \texttt{Z}^s_{p,q}(C)\), so \(\partial x\in F_{p-s}C_{p+q}\), and let \(s<r\). We prove that  \(\partial 'f(x)-f\partial (x)\in \texttt{B}^s_{p-s,q+s-1}(D)\). We have \(\partial' f(x)-f\partial( x)=v\) for some \(v\) in \(F_{p-r}D\subset F_{p-s-1}D_{p+q-1}\), and
    \[\partial' v=-\partial' f\partial x=-f\partial^2x+v'\]
    for some \(v'\) in \(F_{p-s-r}D_{p+q-2}\subset F_{p-2s}D_{p+q-2}\), so \(v\) is in \(\texttt{Z}^{s-1}_{p-s-1,q+s}(D)\). This implies that \(v\in \texttt{B}^s_{p-s,q+s-1}(D)\).
\end{proof}

In the rest of the paper, we will be interested in the special case of filtered complexes in which both the chain complex and the filtration is non-negatively graded. 

%-------------------------------------------------------------

\subsection{The reachability complex and the magnitude-path spectral sequence}
\label{subsec:MPSS}

Next, we describe the {reachability complex} of an \(\N\)-metric space, its {length filtration}, and the associated {magnitude-path spectral sequence}. After recalling the definitions, we prove a new result, \Cref{prop:E^r_00}, describing how page \(E^{r+1}\) of the magnitude-path spectral sequence records the number of \(r\)-connected components in an \(\N\)-metric space.

\begin{defn}
\label{def:RC-chain-complex}
     The \demph{reachability complex} of an \(\N\)-metric space \((X, \dis )\) is the chain complex \((\RC(X), \partial)\) of \(\kk\)-modules which is freely generated in degree \(k\) by tuples \((x_0, \ldots, x_{k})\) of points in \(X\) such that
    \begin{enumerate}
        \item \label{cond:RC-1} consecutive terms are distinct: \(x_{j-1}\neq x_j\) for \(j\in \{1,\ldots, k\}\), and
        \item \label{cond:RC-2} each term can \demph{reach} the next: \(\dis(x_{j-1},x_j) < \infty\) for each \(j \in \{1,\ldots, k\}\).
    \end{enumerate}
    The differential \(\partial \colon \RC_\ast(X) \to \RC_{\ast-1}(X)\) is determined on generators by discarding each term of the tuple in turn and taking the alternating sum:
    \[\partial((x_0,\ldots, x_k)) = \sum_{j=0}^{k} (-1)^j (x_0,\ldots, \widehat{x_j}, \ldots, x_k)\ \text{ where}\]
    \[(x_0,\ldots, \widehat{x_j}, \ldots, x_k) = \begin{cases}
        (x_0,\ldots, x_{j-1}, x_{j+1}, \ldots, x_k) &  x_{j-1} \neq x_{j+1} \\
        0 & x_{j-1} = x_{j+1}.
    \end{cases}\]
\end{defn}

Notice that condition \eqref{cond:RC-2} in \Cref{def:RC-chain-complex} is equivalent to requiring that the \demph{length} of each generating tuple \((x_0,\ldots,x_k)\) is finite, meaning \(\sum_{i=1}^k \dis(x_{i-1}, x_i) < \infty\). It follows that the reachability complex \(\RC(X)\) has a natural filtration by the lengths of its generators. Indeed, if we define the \(\ell^\mathrm{th}\) piece of the filtration to be
\[
    F_\ell \RC_{k}(X) = \kk \cdot \left\{(x_0,\ldots, x_k) \mid x_{j-1} \neq x_{j} \text{ for all \(j\) and } \sum_{j=1}^k \dis(x_{j-1},x_j) \leq \ell\right\}
\]
then we have that \(\partial(F_\ell \RC_{k}(X)) \subseteq F_\ell \RC_{k-1}(X)\) because, by the triangle inequality, removing a term from a generating tuple can only reduce its length. This construction extends to a functor \(\RC\colon \NMet\to \fc\).

\begin{rem}
By splitting according to length, one sees that the reachability complex with the length filtration can be viewed as a \emph{split filtered complex} in the
sense of~\cite[Definition 3.8]{celw18}. That is, it is the image under the
totalization functor \(\Tot\) of a \emph{twisted complex}. We have chosen to work with filtered complexes in this paper, but one can also formulate several results and proofs in terms of twisted complexes and we have found that perspective useful.
\end{rem}

\begin{defn}
\label{def:mpss}
    The \demph{magnitude-path spectral sequence} (or \demph{MPSS}) of an \(\N\)-metric space \(X\) is the spectral sequence associated to the length filtration of \(\RC(X)\). We will denote it by \((E^r(X), \Delta^r)_{r \geq 0}\).
\end{defn}

Since the spectral sequence construction is functorial with respect to maps of filtered complexes, the magnitude-path spectral sequence as a whole defines a functor
\[E^{\bullet} \colon \NMet \to \spse\]
and, for each \(r \geq 0\), page \(E^r\) defines a functor
\[E^r \colon \NMet \to \bgrmod.\]

As discussed in the introduction, page \(E^1\) of the MPSS is the restriction to \(\NMet\) of the functor \(\MH \colon \Met \to \bgrmod\) known as \emph{magnitude homology}: for each \((p,q)\) there is a natural isomorphism \(E^1_{p,q}(X) \cong \MH_{p+q,p}(X)\). Meanwhile, for a directed graph \(G\), there is an isomorphism \(E^2_{n,0}(M(G)) \cong \GLMY_n(G)\), where \(M \colon \DiGraph \hookrightarrow \NMet\) equips a directed graph with its shortest path metric and \(\GLMY \colon \DiGraph \to \mathrm{gMod}_\kk\) is \emph{reduced path homology} \cite[Proposition 6.11]{Asao-path}.

Both magnitude homology and path homology are well-studied invariants in their own right, and each has been shown to capture interesting geometric or combinatorial information---for example about curvature \cite{Asao-curvature, PH-curvature}, convexity \cite{LeinsterShulman}, uniqueness of geodesics \cite{GomiGeodesic} and girth \cite{Asao-curvature}. So far, though, almost nothing is known about the geometric or combinatorial information captured by the later pages of the MPSS. We close this section with a first observation in that direction: in bigrading \((0,0)\), page \(E^{r+1}\) counts \(r\)-connected components (\Cref{def:r-connected_components}).

\begin{prop}\label{prop:E^r_00}
    Let \(X\) be an \(\N\)-metric space. For each \(r \geq 0\), the \(\kk\)-module \(E^{r+1}_{0,0}(X)\) is freely generated by the set of \(r\)-connected components of \(X\).
\end{prop}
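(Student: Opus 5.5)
Since \(E^{r+1}_{0,0}(X) = \texttt{Z}^{r+1}_{0,0}/\texttt{B}^{r+1}_{0,0}\), we compute both modules directly from the formulas of \Cref{subsec:fcx}, using that \(\RC(X)\) and its length filtration are non-negatively graded. In bigrading \((0,0)\) the relevant piece is \(F_0\RC_0(X)\), which is all of \(\RC_0(X)\): it is freely generated by the points of \(X\), since a \(0\)-tuple \((x)\) has length \(0\). The differential on \(\RC_0\) is zero, so \(\texttt{Z}^{r+1}_{0,0} = \RC_0(X) = \kk\cdot X\).

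Next we compute \(\texttt{B}^{r+1}_{0,0} = \texttt{Z}^{r}_{-1,1} + \partial \texttt{Z}^{r}_{r,1-r}\). The first summand lies in \(F_{-1}\RC_0 = 0\). For the second, \(\texttt{Z}^r_{r,1-r} = F_r\RC_1(X)\cap\partial^{-1}(F_0\RC_0(X)) = F_r\RC_1(X)\) (when \(r=0\), \(\texttt{Z}^0_{0,1}=F_0\RC_1=0\), consistent with this). Now \(F_r\RC_1(X)\) is spanned by pairs \((x,y)\) with \(x\neq y\) and \(\dis(x,y)\le r\), and \(\partial(x,y) = (y)-(x)\). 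Hence \(\texttt{B}^{r+1}_{0,0}\) is the submodule of \(\kk\cdot X\) spanned by the differences \((y)-(x)\) with \(\dis(x,y)\le r\).

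Finally, the quotient of the free module \(\kk\cdot X\) by the span of the differences \(y-x\), taken over the generating pairs of a relation, is the free module on the equivalence classes of the equivalence relation generated by that relation. To see this, note that the map \(\kk\cdot X \to \kk\cdot(X\mod\sim_r)\) sending \(x\) to its class is surjective. Its kernel consists of the elements whose coefficients sum to zero within each class, and any such element is a combination of differences \(x-x'\) with \(x\sim_r x'\). Each such difference telescopes along a chain of elementary steps \(\dis(x_i,x_{i+1})\le r\) or \(\dis(x_{i+1},x_i)\le r\), so it lies in \(\texttt{B}^{r+1}_{0,0}\). By \Cref{def:r-connected_components} this gives \(E^{r+1}_{0,0}(X)\cong \kk\cdot(X\mod\sim_r)\).

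The only delicate point is bookkeeping: checking the indices in the Deligne formulas, particularly the \(r=0\) case and the fact that \(\partial^{-1}(F_0\RC_0)\) imposes no condition. The algebraic identification of the quotient is routine.
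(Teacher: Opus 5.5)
Your proposal is correct and follows essentially the same route as the paper: both identify $E^{r+1}_{0,0}(X)$ with the cokernel of $\partial\colon F_r\RC_1(X)\to \RC_0(X)$, $(x,y)\mapsto (y)-(x)$. Both then match this cokernel with $\kk\cdot(X\mod\sim_r)$ by telescoping along chains of elementary steps. The only cosmetic difference is that you compute the kernel of $\kk\cdot X\to\kk\cdot(X\mod\sim_r)$ directly, whereas the paper checks the universal property of the cokernel. In your version you should also state the evident inclusion of $\texttt{B}^{r+1}_{0,0}$ in that kernel.
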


\begin{proof}
    In this proof we set \(\tRC_{p,q}(X)=E^0_{p,q}(\RC(X))\). This is the free \(\kk\)-module spanned by elements in \(\RC_{p+q}(X)\) of the form \((x_0, \ldots, x_{p+q})\) of length \(p\).

    Let \(C=\RC(X)\) be the reachability complex of \(X\), considered as a filtered complex. From the description of the spectral sequence in~\Cref{subsec:fcx} we have \(\texttt{Z}^{r+1}_{0,0}(C)=\tRC_{0,0}(X)\) and \(\texttt{B}^{r+1}_{0,0}(C)=\partial\texttt{Z}^r_{r,1-r}(C)\) with 
    \[\texttt{Z}^r_{r,1-r}(C)=\bigoplus_{k=1}^{r} \tRC_{k,1-k}(X)\]
    so that
    \[E^{r+1}_{0,0}(X) \cong \mathrm{coker} \left( \partial \colon \bigoplus_{k=1}^{r} \tRC_{k,1-k}(X) \to \tRC_{0,0}(X) \right)\]
    with \(\partial(x_0,x_1)=(x_1)-(x_0)\).

    Denote by \(W_r\) the free \(\kk\)-module generated by the set \(X\mod\sim_r\) of \(r\)-connected components of \(X\). We will prove that \(W_r\) satisfies the universal property of the cokernel.
    
    Let \(\pi \colon \tRC_{0,0}(X)\to W_r\) be the \(\kk\)-linear map induced by the quotient \(X\to X\mod\sim_r\). Clearly, \(\pi \partial=0\). Now let \(u \colon \tRC_{0,0}(X)\to P\) be any \(\kk\)-linear map satisfying \(u\partial=0\). Given any \(x \neq y\) in \(X\) such that \(x\sim_r y\), there exist \(x_0=x,x_1,\ldots, x_{N-1},x_N=y\) in \(X\) such that \(0<\dis_X(x_i,x_{i+1})\leq r\) or \(0<\dis_X(x_{i+1},x_i)\leq r\) for \(0 \leq i < N.\) Let \(c \in \bigoplus_{k=1}^{r} \tRC_{k,1-k}(X)\) be defined by \(c=\sum_{i=0}^{N-1} c_i\) where 
    \[c_i=\begin{cases}
        (x_i,x_{i+1}) &  \dis_X(x_i,x_{i+1})\leq r\\
        -(x_{i+1}, x_i) &  \dis_X(x_{i+1},x_{i}) \leq r \text{ and } \dis_X(x_i,x_{i+1})> r.
    \end{cases} 
    \]
    Then \(\partial( c) = y-x\). This proves that \(u \colon \tRC_{0,0}(X)\to P\) uniquely defines a morphism \(\bar u \colon W_r\to P\) such that \(\bar u\pi=u.\) Hence \(E^r_{0,0}(X)\cong W_r.\)   
\end{proof}

%-------------------------------------------------------------

\subsection{Fundamental properties of the pages of the MPSS}
\label{subsec:MPSS-properties}

We have seen that each page of the magnitude-path spectral sequence defines a functor from \(\NMet\) to the category of bigraded \(\kk\)-modules. We now record three more fundamental properties of the pages, analogous to three of the Eilenberg--Steenrod axioms for the ordinary homology of topological spaces: dimension, additivity, and homotopy invariance. In \Cref{sec:excision-exactness}, we will prove that each page satisfies analogues of the two remaining axioms: exactness and excision. All five properties will be used later in the paper.

The dimension and additivity properties follow almost immediately from the definition, but so far as we know their statements do not yet appear in the literature.

\begin{prop}[Dimension Property]
\label{prop:dimension}
    Let \(\pt\) denote the one-point \(\N\)-metric space. For each \(r \geq 0\), we have \(E_{0,0}^r(\pt) = \kk\) while \(E_{p,q}^r(\pt) = 0\) for \((p,q) \neq (0,0)\).
\end{prop}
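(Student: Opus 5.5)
The plan is to compute the reachability complex of the one-point space directly and then read off every page of the spectral sequence. In \(\pt\) there is only one point, so condition~\eqref{cond:RC-1} of \Cref{def:RC-chain-complex} (consecutive terms distinct) rules out every tuple of length \(k+1\) with \(k \geq 1\). Hence \(\RC(\pt)\) is \(\kk\) concentrated in degree \(0\), generated by the tuple \((\smallbullet)\), and the differential is zero. The generator has length \(0\), so the length filtration is \(F_\ell \RC_0(\pt) = \kk\) for all \(\ell \geq 0\) and \(F_\ell = 0\) for \(\ell < 0\). All other graded pieces vanish.

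Next we compute \(E^0\) from the description recalled in \Cref{subsec:fcx}. We have \(E^0_{p,q} = F_p C_{p+q}/F_{p-1}C_{p+q}\), and this is nonzero only when \(p+q=0\) and \(p = 0\), where it equals \(\kk\). So \(E^0(\pt)\) is \(\kk\) in bidegree \((0,0)\) and zero elsewhere. Since \(E^0\) is concentrated in a single bidegree, the differential \(\Delta^0\), which has bidegree \((0,-1)\), must vanish. More generally, each \(\Delta^r\) changes bidegree by \((-r, r-1) \neq (0,0)\), so it maps between zero modules or out of, or into, a zero module.

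We then induct on \(r\) using the characteristic isomorphisms \(E^{r+1} \cong H(E^r,\Delta^r)\). If \(E^r(\pt)\) is \(\kk\) in bidegree \((0,0)\) and zero elsewhere, then \(\Delta^r = 0\), so \(E^{r+1}(\pt) \cong E^r(\pt)\) has the same form. Alternatively, one could compute \(\texttt{Z}^r_{p,q}\) and \(\texttt{B}^r_{p,q}\) directly from the formulas: since \(\partial = 0\), we get \(\texttt{Z}^r_{0,0} = \kk\), and \(\texttt{B}^r_{0,0} = \texttt{Z}^{r-1}_{-1,1} + 0 = F_{-1}C_0 = 0\). For \(r \geq 1\), the \((0,0)\) case is also consistent with \Cref{prop:E^r_00}, because \(\pt\) has exactly one \(r\)-connected component. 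There is no genuine obstacle here; the only point needing care is the observation that distinctness of consecutive terms kills all higher-degree generators.
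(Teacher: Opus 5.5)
Your proposal is correct and matches the paper's proof: \(\RC(\pt)\) is \(\kk\) in degree \(0\) with zero differential and constant filtration, so \(E^0(\pt)\) is \(\kk\) in bidegree \((0,0)\) with all differentials zero, and every later page is the same. You just spell out the filtration and the induction on pages more explicitly than the paper does.
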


\begin{proof}
    Since the reachability complex \(\RC(\pt)\) is concentrated in degree $0$ with constant filtration we have \(E_{0,0}^0(\pt) = \kk\) and \(E_{p,q}^0(\pt) = 0\) whenever \((p,q) \neq (0,0)\) with differential zero, and the statement follows.
\end{proof}

\begin{prop}[Additivity]
\label{prop:additivity}
    Given \(\N\)-metric spaces \(X\) and \(Y\), let \(X \sqcup Y\) denote their coproduct in \(\NMet\). For each \(r \geq 0\) we have
    \begin{equation*}\label{eq:additivity}
        E^r(X \sqcup Y) \cong E^r(X) \oplus E^r(Y).
    \end{equation*}
\end{prop}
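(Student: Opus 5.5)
The plan is to prove additivity at the level of filtered complexes and then pass to spectral sequences. First recall from \Cref{lem:coreflective} that colimits in \(\NMet\) are computed as in \(\Met\). So \(X \sqcup Y\) is the disjoint union of the underlying sets, with \(\dis\) restricting to \(\dis_X\) and \(\dis_Y\) on each piece and \(\dis(x,y)=\dis(y,x)=+\infty\) for \(x\in X\), \(y\in Y\).

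The key observation is about generators of \(\RC(X\sqcup Y)\). Condition \eqref{cond:RC-2} of \Cref{def:RC-chain-complex} requires consecutive terms to be at finite distance. Hence every generating tuple \((x_0,\ldots,x_k)\) lies entirely in \(X\) or entirely in \(Y\), and it is then a generator of \(\RC(X)\) or \(\RC(Y)\) with the same length. Removing a term keeps the tuple inside the same piece. Therefore the maps \(\RC(X)\oplus\RC(Y)\to\RC(X\sqcup Y)\) induced by the two coproduct inclusions give an isomorphism of chain complexes. Because lengths are preserved, this isomorphism restricts to isomorphisms \(F_\ell\RC(X)\oplus F_\ell\RC(Y)\cong F_\ell\RC(X\sqcup Y)\) for every \(\ell\). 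So it is an isomorphism in \(\fc\), where the direct sum carries the sum filtration.

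It then remains to check that the spectral sequence functor \(E^\bullet\) takes direct sums of filtered complexes to direct sums. This can be read off from the explicit description of \(\texttt{Z}^r_{p,q}\) and \(\texttt{B}^r_{p,q}\) in \Cref{subsec:fcx}. Filtration pieces, preimages under \(\partial\), images under \(\partial\), sums and intersections all decompose componentwise for a direct sum \(C\oplus C'\) with \(\partial=\partial_C\oplus\partial_{C'}\). Hence \(\texttt{Z}^r(C\oplus C')=\texttt{Z}^r(C)\oplus\texttt{Z}^r(C')\), and likewise for \(\texttt{B}^r\). The quotient \(E^r\) then splits, and the splitting is compatible with \(\Delta^r\).

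The argument is essentially formal; the only point needing care is the first step, namely that no generator mixes the two pieces. That follows directly from the infinite cross-distances.
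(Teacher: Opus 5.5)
Your proposal is correct and follows the paper's own argument. The paper simply states that, by inspection, \(\RC(X \sqcup Y) \cong \RC(X) \oplus \RC(Y)\) as filtered chain complexes; you spell out the details, namely that the infinite cross-distances rule out mixed generators, and that \(E^r\) commutes with direct sums via the explicit \(\texttt{Z}^r\), \(\texttt{B}^r\) description.
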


\begin{proof}
    By inspection, \(\RC(X \sqcup Y) \cong \RC(X) \oplus \RC(Y)\) as filtered chain complexes.
\end{proof}

The dimension and additivity statements each hold universally for the pages of the MPSS. By contrast, each page \(E^r\) has its own distinct homotopy-invariance property. Concretely, if \(f, g \colon X \rightrightarrows Y\) are maps in \(\NMet\) that are \(r\)-homotopic in the sense of \Cref{def:r-homotopy}, then \(\RC(f)\) and \(\RC(g)\) are \(r\)-homotopic as maps of filtered chain complexes. By \Cref{P:CE_invariance}, this implies the following statement, first observed by Asao in \cite[Theorem 4.37]{Asao-filtered}.

\begin{prop}[Homotopy Invariance]
\label{prop:r-homotopy-invariance}
    If \(f, g \colon X \rightrightarrows Y\) are \(r\)-homotopic maps of \(\N\)-metric spaces, then \(E^s(f) = E^s(g)\) for all \(s > r\). Thus, if \(f \colon X \to Y\) is an \(r\)-homotopy equivalence, then \(E^r(f) \colon E^r(X) \to E^r(Y)\) is a quasi-isomorphism and \(E^s(f) \colon E^s(X) \to E^s(Y)\) is an isomorphism for all \(s > r\). \qed
\end{prop}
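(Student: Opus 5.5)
The argument has two parts: first show that an elementary \(r\)-homotopy between maps of \(\N\)-metric spaces induces a homotopy of order \(r\) between the induced maps of filtered reachability complexes, and then apply \Cref{P:CE_invariance}.

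\emph{Reduction to elementary homotopies.} Suppose \(f \rightsquigarrow_r g\), so that \(\dis_Y(f(x),g(x)) \leq r\) for every \(x \in X\). We will construct a homotopy of order \(r\) from \(\RC(f)\) to \(\RC(g)\). \Cref{P:CE_invariance} then gives \(E^s(f)=E^s(g)\) for all \(s>r\). For a general zig-zag \(f=f_1,\ldots,f_k=g\), each step is an elementary homotopy in one direction or the other, so consecutive maps agree on \(E^s\), and equality follows by transitivity.

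\emph{The homotopy.} We use the standard prism operator. For a generator \((x_0,\ldots,x_k)\), set
\[
h(x_0,\ldots,x_k)=\sum_{j=0}^{k}(-1)^j\,\bigl(f(x_0),\ldots,f(x_j),g(x_j),\ldots,g(x_k)\bigr).
\]
Any tuple with two equal consecutive entries is interpreted as \(0\); this is the same convention as in the differential of \Cref{def:RC-chain-complex}.

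Reachability of each tuple follows from the 1-Lipschitz property of \(f\) and \(g\) together with the finiteness of \(\dis(f(x_j),g(x_j))\). For the filtration, the length of the \(j\)-th term is at most
\[
\sum_{i\le j}\dis(x_{i-1},x_i)+\dis(f(x_j),g(x_j))+\sum_{i>j}\dis(x_{i-1},x_i)\le \ell+r,
\]
so \(h(F_\ell)\subseteq F_{\ell+r}\), as required for a homotopy of order \(r\).

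\emph{Main obstacle: the chain homotopy identity.} The remaining step is to verify \(\partial h+h\partial=\pm(\RC(f)-\RC(g))\); the sign can be fixed by replacing \(h\) with \(-h\). The algebra is the usual simplicial prism computation. The difficulty is the degenerate conventions in \(\RC\):
\begin{itemize}
\item a tuple is set to zero if two consecutive entries coincide, for instance \(f(x_{i-1})=f(x_i)\) or \(f(x_j)=g(x_j)\);
\item a face is set to zero when its removal would create equal neighbours.
\end{itemize}
One therefore has to check that the cancellation pattern survives these conventions.

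The cleanest way to do this is to pass through the larger complex \(\widetilde{\RC}\) spanned by all finite-length tuples, with no distinctness condition. There the prism identity holds verbatim. We then pass to the quotient by the sub-complex spanned by tuples that have some equal consecutive entries. We must confirm that this degenerate part is a sub-complex, and that the quotient is \(\RC\) with exactly the face convention above: a face that produces equal neighbours lands in the degenerate part and becomes zero. We also need \(h\), \(f\) and \(g\) to preserve degenerate tuples. For \(f\) and \(g\) this is clear. For \(h\), each term has the form \(f(\text{prefix})\) followed by \(g(\text{suffix})\), and an equal consecutive pair \(x_{i-1}=x_i\) remains equal inside either the prefix or the suffix.

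Since all of these maps are filtered, the identity descends to \(\RC\). This yields the homotopy of order \(r\) required in the reduction step, which completes the argument.
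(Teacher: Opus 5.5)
Your proposal is correct and takes the same route as the paper, which simply asserts that \(r\)-homotopic maps induce an order-\(r\) homotopy between \(\RC(f)\) and \(\RC(g)\) and then applies \Cref{P:CE_invariance}; your prism operator, checked in the complex of all finite-length tuples and then passed down through the quotient by the degenerate subcomplex, correctly supplies the homotopy that the paper leaves implicit. You never write out the deduction for the second sentence of the statement, but it is immediate: for a homotopy inverse \(f'\) and \(s>r\) one gets \(E^s(f')\circ E^s(f)=E^s(\id_X)\) and \(E^s(f)\circ E^s(f')=E^s(\id_Y)\), and the quasi-isomorphism claim on \(E^r\) follows via the characteristic isomorphism \(H_*(E^r)\cong E^{r+1}\).
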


\Cref{prop:r-homotopy-invariance} says that the homotopy invariance of the MPSS changes as one turns the pages of the sequence. Indeed, page \(E^{r+1}\) is an \(r\)-homotopy invariant but \emph{not} an \((r+1)\)-homotopy invariant, as \cite[Corollary 8.3]{HepworthRoff2024} shows.

In later sections of the paper, the following class of maps will play a central role.

\begin{defn}\label{def:r-quasi-iso}
    Fix \(r \in \N\). We will call a map \(f \colon X \to Y\) in \(\NMet\) an \demph{\(r\)-quasi-isomorphism} if the induced map \(E^{r}(f) \colon E^{r}(X) \to E^{r}(Y)\) is a quasi-isomorphism. We will denote by \(\Ee_r\) the class of \(r\)-quasi-isomorphisms in \(\NMet\).
\end{defn}

By \Cref{prop:r-homotopy-invariance}, we have \(\mathcal{H}_r \subseteq \Ee_r\) for every \(r \in \N\). In \Cref{prop:E_r_H_r} we will see that this inclusion is strict.

%-------------------------------------------------------------
%-------------------------------------------------------------

\section{Exactness and excision theorems}
\label{sec:excision-exactness}

In this section we prove an exactness theorem and an excision theorem for the pages of the magnitude-path spectral sequence. Both statements involve a relative version of the reachability complex and they each hold only for certain nice subspace inclusions. In particular, the exactness theorem holds for {\(r\)-pairs} (\Cref{def:r-pair}), a notion that will feature later in our definition of \(r\)-cofibrations.

%-------------------------------------------------------------

\subsection{The exactness theorem}

Throughout this section we will be concerned with the \emph{relative magnitude-path spectral sequence} of a pair of \(\N\)-metric spaces. We can define the relative MPSS for any pair of a space \(X\) and a subspace \(A \subseteq X\). Because the inclusion \(i \colon A \hookrightarrow X\) is an isometry, the induced map \(\RC(i) \colon \RC(A) \to \RC(X)\) is a strict inclusion of filtered chain complexes. Thus, we can make the following definition.

\begin{defn}
\label{def:relative_MPSS_filtered}
    Let \(X\) be an object of \(\NMet\) and \(A \hookrightarrow X\) a subspace inclusion. The \emph{relative reachability complex} of the pair \((X,A)\) is the quotient chain complex
    \[\RC(X,A) = \RC(X)/\RC(A),\]
    equipped with the filtration inherited from that on \(\RC(X)\), so that 
    \[F_\ell(\RC(X,A)) = F_\ell(\RC(X)) / F_\ell(\RC(A)).\]
    The associated spectral sequence is called the \emph{relative magnitude-path spectral sequence} and denoted by \((E^r(X,A),\Delta^r)_{r\geq 0}\).
\end{defn}

\begin{rem}
    In \cite[Definition 3.2]{HepworthRoff2024}, a relative magnitude-path spectral sequence is defined for any pair of a directed graph \(X\) and a subgraph \(A \subseteq X\). That definition coincides with ours in the case that the subgraph \(A\) is \emph{convex} in the sense of Leinster \cite[Definition 4.1]{LeinsterGraph}.
\end{rem}

We are going to prove that each page of the magnitude-path spectral sequence has a property analogous to Eilenberg--Steenrod's exactness axiom for ordinary homology of topological spaces. Although the relative MPSS can be defined for any pair \((X,A)\), our exactness theorem will hold only for certain `good' pairs. Before we can state the theorem, we must characterize which pairs are `good'. For that, we need the following notion.

\begin{defn}
\label{def:in-out-closed}
    A subspace \(A\) is \demph{inward-closed in \(X\)} if it satisfies
    \[\dis_X(x,a) = \infty \text{ for all } x \in X \setminus A \text{ and } a \in A\]
    or equivalently
    \(\text{for all } a\in A \text{ and } x\in X,\;\dis_X(x,a)<\infty\Rightarrow x\in A.\)
    
     A subspace \(A\) is \demph{outward-closed in \(X\)} if it satisfies
    \[\dis_X(a,x) = \infty \text{ for all } a \in A \text{ and } x \in X \setminus A\]
    or equivalently
    \(\text{for all } a\in A \text{ and } x\in X, \; \dis_X(a,x)<\infty\Rightarrow x\in A.\)
    
\end{defn}

\begin{defn}
    \label{def:r-pair}
    Fix \(r \in \N\). An \demph{\(r\)-pair} of \(\N\)-metric spaces is a pair of \(\N\)-metric spaces \((X,U)\) such that \(U\) is a subspace of \(X\) and the following conditions hold.
    \begin{enumerate}
        \item \label{cond:r-pair_1}
        \(U\) is outward-closed in \(X\).% for all \(a \in A\) and \(x \in X \setminus A\) we have \(\dis(a,x) = \infty\).
        \item \label{cond:r-pair_2}
        For all  \(x \in X \setminus U\) and \(u \in U\) we have \(\dis(x,u) \geq r\).
    \end{enumerate}
    A map of \(r\)-pairs \((X,U) \to (Y,W)\) is a pair of maps \(f \colon X \to Y\) and \(g \colon U \to W\), such that \(f \circ i_U = i_W \circ g\), where \(i_U \colon U \hookrightarrow X\) and \(i_W \colon W \hookrightarrow Y\) are the inclusions.
\end{defn}

Note that every \(r\)-pair is an \((r-1)\)-pair, but not necessarily an \((r+1)\)-pair. Note also that when \(r \in\{0, 1\}\), condition \eqref{cond:r-pair_2} in \Cref{def:r-pair} is automatically satisfied.

The next theorem is one of the key technical results of this paper. It is a generalization of \cite[Theorem 3.3]{HepworthRoff2024}, which addresses the case \(r=1\) for directed graphs.

\begin{thm}[Exactness Theorem]
\label{thm:exactness}
    Fix \(r \in\N\), and let \((X,U)\) be an \(r\)-pair of \(\N\)-metric spaces. Let \(i \colon U \hookrightarrow X\) be the inclusion. For each \(0 \leq s \leq r-1\) we have a split short exact sequence of \(s\)-bigraded complexes on page \(s\) of the MPSS:
    \begin{equation}\label{eq:exactness1}
        0 \to E^s(U) \xrightarrow{E^s(i)} E^s(X)  \xrightarrow{} E^s(X,U) \to 0.
    \end{equation}
    On page \(r\), we have a short exact sequence of \(r\)-bigraded complexes, which splits as an exact sequence of underlying
    bigraded modules:
    \begin{equation}\label{eq:exactness2}
        0 \to E^r(U) \xrightarrow{E^r(i)} E^r(X)  \xrightarrow{} E^r(X,U) \to 0.
    \end{equation}
    On page \({r+1}\) we have the long exact sequence
    \begin{equation}\label{eq:exactness3}
        \cdots \to E^{r+1}_{p,q}(U) \xrightarrow{E^{r+1}(i)} E^{r+1}_{p,q}(X) \xrightarrow{} E^{r+1}_{p,q}(X, U) \xrightarrow{\partial_\ast} E^{r+1}_{p-r,q+r-1}(U) \to \cdots.
    \end{equation}
\end{thm}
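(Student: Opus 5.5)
The plan is to construct a splitting of \(\RC(i)\) at the level of filtered graded modules and feed it into \Cref{P:partial_filtered}.

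\emph{The splitting.} Let \(\rho \colon \RC(X) \to \RC(U)\) be the projection sending a generator \((x_0,\ldots,x_k)\) to itself if every \(x_j \in U\), and to \(0\) otherwise. Since \(U\) is a subspace, \(\rho\) preserves length, so it is a morphism of filtered graded modules. Moreover \(\rho \circ \RC(i) = \id\).

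\emph{Estimating the failure of \(\rho\) to be a chain map.} Take a generator \(x=(x_0,\ldots,x_k)\) of length at most \(\ell\) and compare \(\partial\rho(x)\) with \(\rho\partial(x)\). If all \(x_j\in U\), the two agree. Otherwise some \(x_j\notin U\), and we look at the faces obtained by deleting one term.
\begin{itemize}
\item A face with all entries in \(U\) can only arise if exactly one entry lies outside \(U\).
\item Outward-closedness forbids a step from \(U\) to \(X\setminus U\) of finite length. So every entry after \(x_0\) can reach the next, and once a tuple enters \(U\) it stays in \(U\).
\item Hence the tuple is \((x_0,x_1,\ldots,x_k)\) with \(x_0\notin U\) and \(x_1,\ldots,x_k\in U\), and the only face inside \(U\) is \((x_1,\ldots,x_k)\).
\end{itemize}
In that case \((\partial\rho - \rho\partial)(x) = -(x_1,\ldots,x_k)\). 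Its length is \(\operatorname{len}(x) - \dis(x_0,x_1) \le \ell - r\), by condition (2) of an \(r\)-pair. The degenerate case where deleting creates equal neighbours contributes \(0\) and needs no estimate.

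\emph{Pages \(s \le r\).} By \Cref{P:partial_filtered}, \(\rho\) induces morphisms of \(s\)-bigraded complexes \(E^s(X)\to E^s(U)\) for \(s<r\), and a map of bigraded modules on page \(r\). These are left inverses to \(E^s(i)\), since \(\rho\circ\RC(i)=\id\) is functorial.

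Next, \(0\to\RC(U)\to\RC(X)\to\RC(X,U)\to0\) is a short exact sequence of filtered complexes, strict because \(\RC(i)\) is strict. It is split in each filtration degree by \(\rho\), because \(\rho\) respects the length filtration. So on \(E^0\) it gives a split short exact sequence of complexes of bigraded modules: the associated graded of \(\rho\) is a chain map, as the error term drops filtration by at least \(r\ge1\). (When \(r=0\), only page \(0\) is claimed, with no splitting, so that case is handled directly.)

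\emph{Induction on pages.} Suppose that on page \(s<r\) we have a short exact sequence of \(s\)-bigraded complexes, split by the chain map \(E^s(\rho)\) as complexes. Then the sequence is a direct sum, so homology preserves split exactness. This gives a split short exact sequence on page \(s+1\), and compatibility with the characteristic isomorphisms identifies the maps there with \(E^{s+1}(i)\) and the quotient map. The splitting \(E^{s+1}(\rho)\) is the induced map, and it is a chain map only while \(s+1<r\). At \(s+1=r\) we retain exactness as \(r\)-bigraded complexes, with the module splitting supplied by \Cref{P:partial_filtered}.

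For page \(r+1\), take the homology long exact sequence of the short exact sequence of \(r\)-bigraded complexes on page \(r\). The connecting map has the bidegree \((-r,r-1)\) of \(\Delta^r\), which matches \eqref{eq:exactness3}.

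\emph{Main obstacle.} The hardest step is the second one: checking carefully that every face term of the wrong type is killed by \(\rho\). This includes verifying that faces of tuples containing points outside \(U\) cannot land in \(U\) except via the single case \((x_1,\ldots,x_k)\). It also requires checking that the quotient filtration on \(\RC(X,U)\) really yields the relative spectral sequence term by term, that is, that exactness on \(E^0\) propagates through the identifications.
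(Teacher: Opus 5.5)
Your proposal is correct and is essentially the paper's proof. It uses the same projection $\rho$ onto the summand $\RC(U)$, the same computation $(\partial\rho-\rho\partial)(x)=-(x_1,\ldots,x_k)\in F_{\ell-r}$ coming from outward-closedness and the $r$-pair bound, the same appeal to \Cref{P:partial_filtered}, and the same page-by-page induction ending with the homology long exact sequence on page $r+1$. The only difference is cosmetic: you propagate exactness by noting that a short exact sequence split by a chain map is a direct sum of complexes, whereas the paper passes through the long exact sequence and uses split injectivity of $E^s(i)$; both arguments work.
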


\begin{proof}
    We first prove that the map \(E^s(i)\) is split injective, as a map of \(s\)-bigraded complexes, for \(0\leq s\leq r-1\), and as a map of bigraded modules for \(s=r\). In order to do this, we provide a retraction \(\pi:\RC(X)\to\RC(U)\) satisfying the conditions of \Cref{P:partial_filtered}.
    
    For each \(K \in \N\), let \(\RC^K(X)\) denote the submodule of \(\RC(X)\) spanned by those generators \((x_0,\ldots,x_{n})\) such that the size of the set \(\{k \mid x_k \in X \setminus U\}\) is \(K\). Then \(\RC^0(X)=\RC(U)\) and, because \(U\) is outward-closed, we have for \(K \geq 1\) that
    \[
    (x_0,\ldots,x_{n}) \in \RC^K(X) \Longleftrightarrow
    \begin{cases}
    x_k \in X \setminus U &  0 \leq k \leq K-1,\\
    x_k \in U  & K \leq k \leq n.
    \end{cases}
    \]
    Equip \(\RC^K(X)\) with the grading by \(n\) and the length filtration inherited from \(\RC(X)\). As a filtered graded module, \(\RC(X)\) splits into the direct sum
    \[\RC(X) = \displaystyle{\RC(U) \oplus \bigoplus_{K \geq 1} \RC^K(X)}.\]
    Let \(\pi \colon \RC(X) \to \RC(U)\) be the projection at the level of filtered graded modules.
    
    We claim that \(\pi\) satisfies the hypothesis of \Cref{P:partial_filtered}, namely that for every \(p\geq 0\) and for every generator \(\tuple{x}\) of \(\RC(X)\) of length \(p\), we have
    \[(\partial \pi-\pi \partial)(\tuple{x})\in F_{p-r}\RC(X).\]
    Indeed, if \(\tuple{x} \in \RC(U)\), or \(\tuple{x} \in \RC^K(X)\) with \(K \geq 2\), then \((\partial \pi-\pi \partial)(\tuple{x})=0\). Suppose \(\tuple{x} \in \RC^1(X)\). In that case, \(\tuple{x}\) has the form \((x_0,\ldots,x_{n})\) with \(x_0 \in X \setminus U\) and \(x_j \in U\) for \(j \geq 1\) and \((\partial \pi-\pi \partial)(\tuple{x})=-(x_1,\ldots,x_n)\). Since \(\dis_X(x_0,x_1)\geq r\), the length of \((x_1,\ldots,x_n)\) is \(p-\dis_X(x_0,x_1)\leq p-r\), as claimed.
    
    It now follows from \Cref{P:partial_filtered} that $\pi$ induces a map of \(s\)-bigraded complexes $E^s(\pi)\colon E^s(X)\to E^s(U)$ for \(0 \leq s < r\), and a map of bigraded modules \(E^s(\pi) \colon E^s(X) \to E^s(U)\). Clearly, $E^s(\pi)\circ E^s(i)=\id_{E^s(U)}$ for $0\leq s\leq r$. 

    We prove the first statement of the proposition by induction on \(s\). For \(s=0\), since the morphism of filtered complex \(\RC(U)\to\RC(X)\) is strict and injective,  by \cite[Proposition 1.1.11]{Deligne}, we have the short exact sequence
    \eqref{eq:exactness1} at page \(0\). We have proved that \(E^0(i)\) splits. Now take \(0<s<r-1\) and assume the statement holds for \(s-1\). The short exact sequence \eqref{eq:exactness1} of \((s-1)\)-bigraded complexes at page \(s-1\)  yields a long exact sequence at page \(s\) and since \(E^s(i)\) is split injective we get a split short exact sequence \eqref{eq:exactness1} of \(s\)-bigraded complexes. Combined with the induction hypothesis, that proves the first part of the statement. From the short exact sequence on page \(r-1\) we obtain a long exact sequence on page \(r\) which is split at the level of bigraded modules by the map \(E^r(\pi)\), yielding the short exact sequence in line \eqref{eq:exactness2}. That short exact sequence, in turn, gives us the long exact sequence on page $r+1$.
\end{proof}

%-------------------------------------------------------------

\subsection{The excision theorem}

We give two versions of an excision theorem, both of which will be used later. Each one specifies criteria for the pushout of a map along a subspace inclusion to induce an isomorphism of filtered relative reachability complexes, and thus an isomorphism on all pages of the relative magnitude-path spectral sequence. The proofs require the following lemma.

\begin{lem}
\label{lem:pushout_closed}
    Let \(X\) be an \(\N\)-metric space and \(i \colon A \hookrightarrow X\) a subspace inclusion. Let \(f\colon A \to B\) be any map in \(\NMet\) and consider the pushout
        \begin{equation}\label{diag:pushout}
        \begin{tikzcd}
            A \arrow[d, "f"'] \arrow[hook, r,"i"] \arrow[dr, phantom, "\scalebox{1.5}{\rotatebox{180}{$\lrcorner$}}" , very near end]
            & X \arrow{d}\arrow[d,"g"] \\
            B \arrow[hook, r,"j"'] & Y.
        \end{tikzcd}
        \end{equation}
    The map \(j \colon B \to Y\) is a subspace inclusion and \(g\) restricts to an isomorphism 
    \[g|_{X \setminus A} \colon X \setminus A \to Y \setminus B.\]
    Moreover, if \(A\) is inward-closed (or outward-closed) in \(X\) then so is \(B\) in \(Y\), and if \((X,A)\) is an \(r\)-pair then so is \((Y,B)\).
\end{lem}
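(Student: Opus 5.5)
The first claim, that \(j\) is a subspace inclusion, is already part of \Cref{lem:NMet_pushout}. The plan is to read everything else off the explicit metric on \(Y = B \sqcup (X\setminus A)\) given in \Cref{lem:NMet_pushout}~\eqref{pt:NMet_pushout_1}. The map \(g\) is described in \eqref{pt:NMet_pushout_3}: on \(X\setminus A\) it is the identity onto \(Y\setminus B\), and on \(A\) it is \(f\). So \(g|_{X\setminus A}\) is a bijection onto \(Y\setminus B\). By the first line of the case formula, \(\dis_Y\) restricted to \(X\setminus A\) is \(\dis_X\). Hence \(g|_{X\setminus A}\) is an isometric bijection, and therefore an isomorphism of subspaces.

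For the closure properties we use the mixed-case formulas. Suppose \(A\) is inward-closed, so \(\dis_X(y,a)=\infty\) for all \(y\in X\setminus A\) and \(a\in A\). For \(y\in X\setminus A = Y\setminus B\) and \(b\in B\) we have
\[\dis_Y(y,b)=\min_{a\in A}\{\dis_X(y,a)+\dis_B(f(a),b)\}=\infty.\]
So \(B\) is inward-closed in \(Y\). The outward-closed case is symmetric, using the third case of the formula: each term \(\dis_B(b,f(a))+\dis_X(a,y')\) is infinite because \(\dis_X(a,y')=\infty\).

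For the \(r\)-pair statement, condition \eqref{cond:r-pair_1} is the outward-closed case just treated. For condition \eqref{cond:r-pair_2}, take \(y\in Y\setminus B\) and \(b\in B\). Then
\[\dis_Y(y,b)=\min_{a\in A}\{\dis_X(y,a)+\dis_B(f(a),b)\}\ \ge\ \min_{a\in A}\dis_X(y,a)\ \ge\ r,\]
since distances are nonnegative and \((X,A)\) is an \(r\)-pair.

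The only delicate point is \(A=\emptyset\). Then the minimum is over the empty set and should be read as \(\infty\), so \(Y=B\sqcup X\) and every claim holds trivially. Beyond this convention there is no real obstacle: the whole proof is bookkeeping with the pushout formula.
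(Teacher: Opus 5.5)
You follow the paper's route exactly: every claim is read off the explicit pushout metric of \Cref{lem:NMet_pushout}~\eqref{pt:NMet_pushout_1}. The parts about \(j\), about inward/outward-closedness and about \(r\)-pairs are sound. They use only the \(B\)--\(B\) case and the two mixed cases of that formula, and these hold for an arbitrary subspace inclusion. The reason is that in a chain computing such a distance, any leg \(\dis_X(a,a')\) between two points of \(A\) can be replaced by \(\dis_B(f(a),f(a'))\le \dis_A(a,a')\). Your treatment of \(A=\emptyset\) is also fine.

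The step that fails is ``\(\dis_Y\) restricted to \(X\setminus A\) is \(\dis_X\)'' (that is the second case of the formula, not the first). For a general subspace this is false. Two points of \(X\setminus A\) can become closer by entering \(A\), crossing an identification made by \(f\), and leaving again. Concretely, let \(X=\{0,1,9,10\}\) with \(\dis_X(x,y)=|x-y|\), \(A=\{1,9\}\), \(B=\pt\), and \(f\) constant. Then \(g(1)=g(9)\), so the triangle inequality in \(Y\) forces
\[\dis_Y(g(0),g(10))\le \dis_X(0,1)+\dis_X(9,10)=2<10=\dis_X(0,10).\]
So \(g|_{X\setminus A}\) is a bijection but not an isometry, and the assertion that it is an isomorphism is false as written. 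In general, for \(y,y'\in X\setminus A\),
\[\dis_Y(y,y')=\min\Bigl\{\dis_X(y,y'),\ \min_{a,a'\in A}\bigl(\dis_X(y,a)+\dis_B(f(a),f(a'))+\dis_X(a',y')\bigr)\Bigr\}.\]
The paper's own proof appeals to the same second case of \Cref{lem:NMet_pushout}~\eqref{pt:NMet_pushout_1}, so you have inherited this error rather than introduced it. The repair is to assume that \(A\) is inward-closed or outward-closed in \(X\). Then \(\dis_X(y,a)=\infty\) for all \(a\in A\), respectively \(\dis_X(a',y')=\infty\) for all \(a'\in A\). The extra term is therefore infinite, and \(g|_{X\setminus A}\) is a bijective isometry. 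As far as I can see, this covers every later use in the paper (for instance the Special Excision Theorem, where \(A\) is inward-closed, and \Cref{lem:pi-excision}).
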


\begin{proof}
    The metric on the pushout is described explicitly in \Cref{lem:NMet_pushout}\!~\eqref{pt:NMet_pushout_1}. It can be seen directly from that description that \(j\) is a subspace inclusion and that \(g\) restricts to an isomorphism (bijective isometry) between \(X \setminus A\) and \(Y \setminus B\).

    If we assume that \(A\) is outward-closed in \(X\), the same lemma tells us that for each \(y \in B\) and \(y' \in Y \setminus B\) we have \(y' = g(x)\) for a unique \(x \in X \setminus A\) and 
    \[\dis_Y(y,y') = \min_{a \in A} \{\dis_B(y,f(a)) + \dis_X(a, x)\} = \infty,\]
    since \(\dis_X(a,x) = \infty\) for all \(a \in A\). This says that \(B\) is outward-closed in \(Y\); the same argument shows that if \(A\) is inward-closed in \(X\), so is \(B\) in \(Y\). If \((X,A)\) is an \(r\)-pair, then for the same \(y\) and \(y'\) we have
    \[\dis_Y(y',y) = \min_{a \in A} \{\dis_X(x,a) + \dis_B(f(a), y)\} \geq r,\]
    since \(\dis_X(x,a) \geq r\) for all \(a \in A\). This says that \((Y,B)\) is an \(r\)-pair.
\end{proof}

\begin{notation}
    Given a metric space \(X\) and a point \(x \in X\), let 
    \[X_{\geq x} = \{x' \in X \mid \dis_X(x,x') < \infty\},\] 
   % and let
    \[X_{\leq x} = \{x' \in X \mid \dis_X(x',x) < \infty\},\]
    both metrized as subspaces of \(X\).
\end{notation}

\begin{thm}[General Excision Theorem]
\label{thm:general-excision}
    Let \(X\) be an \(\N\)-metric space and \(i \colon A \hookrightarrow X\) the inclusion of an outward-closed subspace. Let \(f \colon A \to B\) be any map in \(\NMet\) and consider their pushout, as shown in diagram \eqref{diag:pushout}. Suppose that for every \(x \in X \setminus A\) the map \(g\) restricts to a bijective isometry \(X_{\geq x} \to Y_{\geq g(x)}\). Then the map \(\bar{g} \colon \RC(X,A)\to \RC(Y,B)\) determined by \(g\) is an isomorphism of filtered chain complexes, so the induced map
    \[E^r(\bar{g}) \colon E^r(X, A) \to E^r(Y, B)\]
    is an isomorphism for every \(r \in \N\).
    
    If \(A\) is inward-closed in \(X\), the same result holds under the condition that for every \(x \in X \setminus A\) the map \(g\) restricts to a bijective isometry \(X_{\leq x} \to Y_{\leq g(x)}\).
\end{thm}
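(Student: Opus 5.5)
The plan is to describe bases of $\RC(X,A)$ and $\RC(Y,B)$ explicitly and show that $\bar g$ maps one basis bijectively onto the other while preserving length. Since $\RC(A)\hookrightarrow\RC(X)$ is the inclusion of a sub-complex spanned by a subset of the generators, $\RC(X,A)$ is free on the generators $(x_0,\ldots,x_n)$ of $\RC(X)$ with at least one $x_k\in X\setminus A$. In filtration degree $\ell$, it is free on those of length at most $\ell$; the same holds for $(Y,B)$. By functoriality $\bar g$ is a morphism of filtered chain complexes. So it suffices to show that $\bar g$ sends these basis elements bijectively onto the corresponding basis of $\RC(Y,B)$, preserving lengths. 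Then it is an isomorphism of filtered complexes whose inverse is automatically filtered, and $E^r(\bar g)$ is an isomorphism for all $r$ by functoriality of $E^\bullet$.

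In the outward-closed case I will describe the basis elements. Let $\tuple{x}=(x_0,\ldots,x_n)$ be a basis element, and let $k$ be the first index with $x_k\notin A$. Then no earlier term lies outside $A$. Moreover, if any $x_j$ with $j<k$ lies in $A$, then outward-closedness and $\dis(x_{k-1},x_k)<\infty$ would force $x_k\in A$. Hence $k=0$, so $x_0\in X\setminus A$, and all terms lie in $X_{\geq x_0}$ by the triangle inequality. By \Cref{lem:pushout_closed}, $B$ is outward-closed in $Y$, so a basis element of $\RC(Y,B)$ likewise has $y_0\in Y\setminus B$ and lies in $Y_{\geq y_0}$. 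Also by \Cref{lem:pushout_closed}, $g|_{X\setminus A}$ is a bijection onto $Y\setminus B$.

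Now fix $x\in X\setminus A$. The hypothesis that $g\colon X_{\geq x}\to Y_{\geq g(x)}$ is a bijective isometry gives a bijection between tuples in $X_{\geq x}$ starting at $x$ and tuples in $Y_{\geq g(x)}$ starting at $g(x)$. Because the map is an isometry, this bijection preserves the conditions of \Cref{def:RC-chain-complex} (distinct consecutive terms, finite distances) and preserves length. Since $g(x)\notin B$, the image tuple is nonzero in $\RC(Y,B)$. Taking the union over $x\in X\setminus A$ gives the required bijection of bases, using that distinct starting points $x$ map to distinct $g(x)$. I also need that $\bar g$ on a generator really equals the image tuple and is not killed. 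Consecutive terms stay distinct by injectivity on $X_{\geq x}$, so the image is a genuine generator.

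The inward-closed case is symmetric: the last term $x_n$ must lie in $X\setminus A$, all terms lie in $X_{\leq x_n}$, and one uses the bijective isometries $X_{\leq x}\to Y_{\leq g(x)}$. The main obstacle is the first step, showing that every relative generator starts in $X\setminus A$ and lives entirely in $X_{\geq x_0}$. Everything else is bookkeeping with the explicit pushout description of \Cref{lem:NMet_pushout}.
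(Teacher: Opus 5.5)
Your proposal is correct and follows essentially the same route as the paper's proof. Both arguments use outward-closedness of $A$, and of $B$ via \Cref{lem:pushout_closed}, to show that every relative generator starts at a point $x_0\in X\setminus A$ (resp.\ $y_0\in Y\setminus B$) and lies entirely in $X_{\geq x_0}$ (resp.\ $Y_{\geq y_0}$); the bijective isometries $X_{\geq x_0}\to Y_{\geq g(x_0)}$ then give a length-preserving bijection of bases, and the inward-closed case is symmetric.
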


\begin{proof}
    Suppose \(A\) is outward-closed in \(X\); then, by \Cref{lem:pushout_closed}, \(j \colon B \to Y\) is the inclusion of an outward-closed subspace. By definition, the chain complex
    \[F_\ell(\RC(Y,B)) = F_\ell(\RC(Y))/F_\ell(\RC(B))\] 
    is freely generated in degree \(n\) by tuples
    \((y_0,\ldots, y_{n})\)
    of points in \(Y\) such that \(\sum_{i=0}^{n-1} \dis_Y(y_i,y_{i+1}) \leq \ell\) 
    % \Sarah{\(\sum_{i=0}^{n-1} \dis_Y(y_i,y_{i+1}) \leq \ell\) and related changes below }
    and \(y_k \in Y \setminus B\) for some \(k \in \{0, \ldots, n\}\).  Notice, though, that as \(B\) is outward-closed in \(Y\) we have \(y_k \in Y \setminus B\) for some \(k\) if and only if \(y_0 \in Y \setminus B\). Moreover, since
    \[\dis_Y(y_0, y_k) \leq \sum_{i=1}^{k} \dis_Y(y_{i-1},y_{i}) \leq \sum_{i=1}^{n} \dis_Y(y_{i-1},y_{i}) \leq \ell < \infty\]
    for \(1 \leq k \leq n\), the points \(y_1,\ldots, y_{n}\) all belong to the subspace \(Y_{\geq{y_0}}\). Thus, \(F_\ell(\RC_{n}(Y,B))\) is freely generated by the set
    \[\left\{(y_0,\ldots, y_{n}) \mid y_0 \in Y \setminus B, y_k \in Y_{\geq y_0} \text{ for } 1 \leq k \leq n, \text{ and } \sum_{i=1}^{n} \dis_Y(y_{i-1},y_{i}) \leq \ell\right\}.\]
    
    Fix a generator \((y_0,\ldots, y_{n})\). Since \(y_0\) is in \(Y \setminus B\), there exists a unique point \(x_0\in X \setminus A\) such that \(g(x_0)=y_0\). By assumption, the induced map \(X_{\geq x_0} \to Y_{\geq y_0}\) is a bijective isometry, so there are unique points \(x_1,\ldots, x_{n}\) in \(X\) such that  \(g(x_k)=y_k\), and they satisfy \(\sum_{i=1}^{n} \dis_X(x_{i-1},x_{i})\leq\ell\). It follows that the chain map \(\bar{g} \colon \RC(X,A)\to \RC(Y,B)\) is an isomorphism of filtered chain complexes and hence induces an isomorphism \(E^r(\bar{g}) \colon E^r(X, A) \to E^r(Y, B)\) for every \(r \in \N\).

    The proof is similar in the case that \(A\) (and hence \(B\)) is inward-closed, using the observation that \(y_k \in Y \setminus B\) for some \(k \in \{0, \ldots, n\}\) if and only if \(y_{n} \in Y \setminus B\).
\end{proof}

\begin{thm}[Special Excision Theorem]
\label{thm:special-excision}
    Let \(A \hookrightarrow U \hookrightarrow X\) be subspace inclusions such that \(A\) is inward-closed in \(X\) and \(U\) is outward-closed in \(X\). Let \(f \colon A\to B\) be any map in \(\NMet\), and consider the diagram
    \begin{equation}
        \label{diag:AUX_pushout}
        \begin{tikzcd}
        A \arrow[swap]{d}{f} \arrow[hook,swap]{r}{}  
        & U \arrow[]{d}{} \arrow[hook,swap]{r}{}  
        & X \arrow[]{d}{g} \\
        B  \arrow[swap, hook]{r}{} 
        & W \arrow[hook]{r}{}&  Y
        \end{tikzcd}
    \end{equation} 
    in which the left square and the outer rectangle are pushouts. The morphism \(\bar{g} \colon \RC(X, U) \to \RC(Y, W)\) determined by \(g\) is an isomorphism of filtered chain complexes, so the induced map
    \[E^r(\bar{g}) \colon E^r(X, U) \to E^r(Y, W)\]
    is an isomorphism for every \(r \in \N\).
\end{thm}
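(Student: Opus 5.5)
The plan is to compare generators directly, as in the proof of \Cref{thm:general-excision}: describe a basis of \(F_\ell\RC(X,U)\) and of \(F_\ell\RC(Y,W)\), then show that \(g\) matches the two bases bijectively and preserves lengths exactly.

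First I identify the spaces. By pasting, the right-hand square in \eqref{diag:AUX_pushout} is a pushout of \(U \hookrightarrow X\) along \(U \to W\). So by \Cref{lem:pushout_closed}, \(W\) is an outward-closed subspace of \(Y\), and \(g\) restricts to a bijective isometry \(X \setminus U \to Y \setminus W\). As in the proof of \Cref{thm:general-excision}, \(F_\ell\RC_n(Y,W)\) is freely generated by tuples \((y_0,\ldots,y_n)\) of length at most \(\ell\) with \(y_0 \in Y\setminus W\), since outward-closedness of \(W\) forces the tuple to leave \(W\) at the start. The same description holds for \(\RC(X,U)\).

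Next I describe \(Y\) using the outer pushout and \Cref{lem:NMet_pushout}. As a set, \(Y = B \sqcup (X\setminus A)\). Distances from a point of \(X\setminus A\) into \(B\) are \(\min_a\{\dis_X(x,a)+\dis_B(f(a),y')\}\), which is infinite because \(A\) is inward-closed. Distances from \(B\) into \(X\setminus A\) may be finite. Distances between points of \(X\setminus A\) agree with \(\dis_X\).

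The key step is the claim that for \(x_0 \in X\setminus U\), every \(y\) with \(\dis_Y(g(x_0),y)<\infty\) lies in \(g(X\setminus A)\). This holds because \(g(x_0)\in X\setminus A\) and nothing in \(X\setminus A\) reaches \(B\). Moreover, \(X_{\geq x_0}\) avoids \(A\), since \(A\) is inward-closed. Therefore \(g\) restricts to a bijective isometry \(X_{\geq x_0} \to Y_{\geq g(x_0)}\): both sides are the reachable set from \(x_0\) inside \(X\setminus A\), with the metric of \(X\).

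With this, every generator \((y_0,\ldots,y_n)\) of \(\RC(Y,W)\) lifts uniquely to a generator of \(\RC(X,U)\) with the same length. Hence \(\bar g\) is bijective on filtered generators. It is a chain map because it is induced by \(g\). So it is an isomorphism of filtered complexes, and the statement follows on all pages.

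The only delicate point is making sure the relevant distances in \(Y\) are computed by the outer pushout formula rather than via \(W\). This is why I use the explicit description of \(Y\) coming from the outer rectangle, and use the right square only to get the outward-closedness of \(W\) in \(Y\).
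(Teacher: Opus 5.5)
Your proposal is correct and follows the paper's proof. The paper applies \Cref{thm:general-excision} to the right-hand square and checks its hypothesis exactly as in your key step: inward-closedness of \(A\) (hence of \(B\) in \(Y\)) together with the isometry \(X\setminus A\to Y\setminus B\) from the outer pushout. The only difference is that you inline the generator-matching argument of that theorem instead of citing it.
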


\begin{proof} 
    The pasting law for pushouts tells us that the diagram commutes and the square on the right is a pushout. We will apply the general excision theorem (\Cref{thm:general-excision}) to the square on the right.
    
    Fix \(x \in X \setminus U\). Because \(A\) is inward-closed in \(X\), we have \(\dis(x,a) = \infty\) for all \(a \in A\); it follows that \(X_{\geq x}\) is contained in \(X \setminus A\). By \Cref{lem:pushout_closed}, the subspace \(B\) is inward-closed in \(Y\), and as \(g(x) \in Y \setminus B\) we also have \(Y_{\geq g(x)} \subseteq Y\setminus B\). But \Cref{lem:pushout_closed} also tells us that \(g\) restricts to a bijective isometry \(X\setminus A\to Y\setminus B\), so it restricts to a bijective isometry \(X_{\geq x}\to Y_{\geq g(x)}\). Thus, the conditions of \Cref{thm:general-excision} are satisfied and this theorem follows.
\end{proof}

%-------------------------------------------------------------
%-------------------------------------------------------------

\section{A Mayer--Vietoris theorem}
\label{sec:cofibrations-and-MV}

In this section, we combine the exactness theorem and one of the excision theorems of the previous section to establish a {Mayer--Vietoris-type theorem} for each page of the magnitude-path spectral sequence. For each natural number \(r\), we introduce a class of maps that we call {\(r\)-cofibrations}, and the theorem takes the form of a long exact sequence on page \(E^{r+1}\) of the MPSS associated to an \(r\)-cofibration.

%-------------------------------------------------------------

\subsection{Cofibrations}\label{subsec:cofibrations}

For each natural number \(r\), we define a notion of \(r\)-cofibration in  \(\NMet\). These are particularly nice subspace inclusions, having a factorization in terms of a strong \(r\)-deformation retract (Definition~\ref{def:strong-r-def-retract}) and an \(r\)-pair (Definition~\ref{def:r-pair}).

\begin{defn}
\label{def:r-cofib}
    Fix \(r \in \N\). An \demph{\(r\)-cofibration} in \(\NMet\) is a subspace inclusion \(i \colon A \hookrightarrow X\) for which the following conditions hold.
    \begin{enumerate}
        \item \label{cond:r-cofib_1} The subspace \(A\) is inward-closed in \(X\).
        \item \label{cond:r-cofib_2} There exists a factorization \(A \hookrightarrow U \hookrightarrow X\) of \(i\) such that \((X,U)\) is an \(r\)-pair and \((U,A)\) is a strong \(r\)-deformation retract.
    \end{enumerate}
    For each \(r \in \N\), we will denote the class of \(r\)-cofibrations by \(\cof_r\).
\end{defn}

We have observed, on the one hand, that every \(r\)-pair is an \((r-1)\)-pair but not necessarily an \((r+1)\)-pair. On the other hand, every \(r\)-deformation retract is an \((r+1)\)-deformation retract but not necessarily an \((r-1)\)-deformation retract. It follows that the classes of \(r\)-cofibrations are not nested. However, the classes are not disjoint, as the next example shows.

\begin{eg}
\label{eg:special-cofibs}
    There are some maps in \(\NMet\) that belong to \(\cof_r\) for every \(r\).
    \begin{enumerate}
        \item \label{eg:empty-cofib} For every object \(X\) of \(\NMet\), the inclusion of the empty subspace \(\emptyset \hookrightarrow X\) is an \(r\)-cofibration for every \(r \in \N\).
        
        \item \label{eg:iso-cofib} If \(i \colon A \to Y\) is an isomorphism in \(\NMet\)---that is, a bijective isometry---then it is an \(r\)-cofibration for every \(r \in \N\). 
    \end{enumerate}
\end{eg}

A \(0\)-deformation retract is simply a bijective isometry. It follows that a \(0\)-cofibration is a subspace inclusion \(A \hookrightarrow X\) with the property that \(A\) is both inward-closed and outward-closed in \(X\). This says precisely that \(X = A \sqcup X \setminus A\), where \(\sqcup\) denotes the coproduct in \(\NMet\).

For \(r \geq 1\), an important class of examples of \(r\)-cofibrations arises via the \emph{\(r\)-cylinder construction}, which we now define.

\begin{defn}
\label{def:r-cylinder}
    Fix \(r \geq 1\). The \demph{directed \(r\)-interval} is \(K_r = (\{-2,-1,0,1,2\}, \dis)\) where \(\dis(-2,-1)=\dis(0,-1)=\dis(0,1)=\dis(2,1)=r\) and \(\dis(x,y)=+\infty\) otherwise. We can represent \(K_r\) graphically as
    \[
    \begin{tikzcd}[column sep=4ex]
        % -2 & -1 & 0 & 1 & 2 \\ [-20]
        -2 \arrow{r}{r} & -1 & 0 \arrow[swap]{l}{r} \arrow{r}{r} & 1 & 2 \arrow[swap]{l}{r} 
    \end{tikzcd}
    \]
    Given an \(\N\)-metric space \(X\), the \demph{\(r\)-cylinder} of \(X\) is the space 
    \[\Cylr(X) \coloneqq X \plusx K_r.\]
\end{defn}

We will prove that for every \(\N\)-metric space \(X\), the inclusion of the `end pieces' \(X \sqcup X \hookrightarrow \Cylr(X)\) is an \(r\)-cofibration. The proof uses the following lemma.

\begin{lem}
\label{lem:tensor-preserves-cof}
    Fix \(r \in \N\). For every \(\N\)-metric space \(X\), the functor
    \[X \plusx - \colon \NMet \to \NMet\]
    preserves \(r\)-pairs and the classes of morphisms \(\mathcal{H}_r\) and \(\cof_r.\)
\end{lem}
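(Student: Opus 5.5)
We check the three preservation claims separately, each time applying \(X \plusx -\) to the relevant structure on the second factor and using the explicit metric \eqref{eq:plus_times} on the \(\ell_1\)-product. Note first that if \(j\colon A\hookrightarrow Y\) is a subspace inclusion, then so is \(\id_X\plusx j\colon X\plusx A\hookrightarrow X\plusx Y\), since distances are computed coordinatewise. Also, for any map \(f\), the map \(\id_X\plusx f\) is 1-Lipschitz, so \(X\plusx -\) is indeed a functor on \(\NMet\).

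\emph{\(r\)-pairs.} Let \((Y,U)\) be an \(r\)-pair. A point of \(X\plusx Y\) lies outside \(X\plusx U\) exactly when its second coordinate lies in \(Y\setminus U\). For \((x,u)\in X\plusx U\) and \((x',y)\) with \(y\notin U\), we have
\[\dis((x,u),(x',y))=\dis_X(x,x')+\dis_Y(u,y)=\infty,\]
so \(X\plusx U\) is outward-closed. Similarly,
\[\dis((x',y),(x,u))=\dis_X(x',x)+\dis_Y(y,u)\geq \dis_Y(y,u)\geq r,\]
since the first summand is non-negative. The same one-line argument shows that \(X\plusx-\) preserves inward-closed subspaces, which we will need for cofibrations.

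\emph{\(\mathcal H_r\).} The key observation is that an elementary \(r\)-homotopy \(f\rightsquigarrow_r g\) between maps \(Y\to Z\) gives
\[\dis\bigl((x,f(y)),(x,g(y))\bigr)=\dis_Z(f(y),g(y))\leq r,\]
so \(\id_X\plusx f\rightsquigarrow_r \id_X\plusx g\). Hence zig-zags are carried to zig-zags, and \(f\sim_r g\) implies \(\id_X\plusx f\sim_r \id_X\plusx g\). By functoriality, \(\id_X\plusx(g\circ f)=(\id_X\plusx g)\circ(\id_X\plusx f)\), and \(\id_X\plusx\id=\id\). Therefore, if \(g\) is an \(r\)-homotopy inverse of \(f\), then \(\id_X\plusx g\) is an \(r\)-homotopy inverse of \(\id_X\plusx f\).

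\emph{\(\cof_r\).} Let \(A\hookrightarrow U\hookrightarrow Y\) be a factorization witnessing that \(A\hookrightarrow Y\) is an \(r\)-cofibration. Applying \(X\plusx-\) gives subspace inclusions \(X\plusx A\hookrightarrow X\plusx U\hookrightarrow X\plusx Y\). Here \(X\plusx A\) is inward-closed in \(X\plusx Y\), and \((X\plusx Y,X\plusx U)\) is an \(r\)-pair, both by the first step. It remains to see that \(X\plusx A\) is a strong \(r\)-deformation retract of \(X\plusx U\). Take the retraction \(\id_X\plusx\pi\) and the zig-zag \((\id_X\plusx f_\ell)_\ell\), where \(\pi\) and \((f_\ell)\) witness that \((U,A)\) is a strong \(r\)-deformation retract. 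By the previous step this is a zig-zag of elementary \(r\)-homotopies. The identities \(f_\ell\circ i=i\), \(\pi\circ f_\ell=\pi\) and \(\pi\circ i=\id\), together with the endpoint conditions, are all carried over by functoriality of \(X\plusx-\).

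None of these steps is a serious obstacle; the only point needing care is keeping track of which coordinate decides membership in the various subspaces, together with the non-negativity of \(\dis_X\) used in the distance bound for \(r\)-pairs.
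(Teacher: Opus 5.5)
Your proof is correct and takes the same approach as the paper. The paper only sketches these three coordinatewise checks: it uses \((X\plusx Y)\setminus(X\plusx U)=X\plusx(Y\setminus U)\) for \(r\)-pairs, and it applies \(X\plusx-\) to the witnessing factorization, retraction and zig-zag for cofibrations. You have filled in the omitted details accurately, including preservation of inward-closedness and the fact that \(\dis_X(x,x)=0\) carries elementary \(r\)-homotopies to elementary \(r\)-homotopies.
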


\begin{proof}
    These are straightforward checks, whose details we omit. If \((Y,U)\) is an \(r\)-pair, one uses the fact that \((X \plusx Y) \setminus (X \plusx U) = X \plusx (Y \setminus U)\) to check that \((X\plusx Y,X\plusx U)\) is an \(r\)-pair. If $f \colon Y \to Y'$ is in $\mathcal H_r$, it is immediate to see that $\id_X \plusx f$ is in $\mathcal H_r$. If \(i \colon A \hookrightarrow Y\) is an $r$-cofibration and \(A \hookrightarrow U \hookrightarrow Y\) is a factorization of \(i\) satisfying \Cref{def:r-cofib}, one can check that \(X \plusx A \hookrightarrow X \plusx U \hookrightarrow X \plusx Y\) also satisfies \Cref{def:r-cofib}, so \(\id_X \plusx i\) is an \(r\)-cofibration.
\end{proof}

\begin{prop}\label{prop:cylr-inclusions}
    Let \(X\) be an \(\N\)-metric space. For each \(r \geq 1\), the inclusion
    \[X \sqcup X \hookrightarrow \Cylr(X)\]
    that maps \(X \sqcup X\) isometrically to the subspace \(X \plusx \{-2,2\}\) is an \(r\)-cofibration.
\end{prop}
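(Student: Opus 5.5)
By \Cref{lem:tensor-preserves-cof}, the functor \(X \plusx -\) preserves \(r\)-cofibrations. The subspace \(X \plusx \{-2,2\}\) of \(X \plusx K_r\) is isomorphic to \(X \sqcup X\), because all distances between \(-2\) and \(2\) in \(K_r\) are infinite. The statement therefore reduces to the case \(X = \pt\). So the plan is to show that the inclusion \(A = \{-2,2\} \hookrightarrow K_r\) is an \(r\)-cofibration, and then apply \(X \plusx -\).

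First, \(A\) is inward-closed in \(K_r\). Every finite nonzero distance in \(K_r\) goes from a point of \(\{-2,0,2\}\) to a point of \(\{-1,1\}\). Hence no point of \(K_r \setminus A = \{-1,0,1\}\) reaches \(-2\) or \(2\).

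For the factorization, take \(U = \{-2,-1,1,2\}\). Its metric is \(\dis(-2,-1)=\dis(2,1)=r\) and \(\infty\) otherwise, so \(U\) is the disjoint union of two copies of the \(r\)-interval \(I_r\).

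We first check that \((K_r,U)\) is an \(r\)-pair. The only point of \(K_r \setminus U\) is \(0\).
\begin{enumerate}
    \item \(U\) is outward-closed, since \(\dis(u,0)=\infty\) for every \(u \in U\).
    \item For every \(u \in U\) we have \(\dis(0,u) \in \{r,\infty\}\), so \(\dis(0,u) \geq r\).
\end{enumerate}

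Next we check that \((U,A)\) is a strong \(r\)-deformation retract. Let \(\pi \colon U \to A\) send \(\pm1 \mapsto \pm2\) and fix \(\pm2\). This map is 1-Lipschitz, since \(\dis(\pm2,\pm2)=0 \le r\), and \(\pi \circ i = \id_A\). Use the one-step zig-zag \((f_0,f_1) = (i\circ\pi, \id_U)\). Its \(r\)-homotopy condition reads \(\dis(i\pi(u),u) \le r\) for all \(u\): this holds because \(\dis(\pm2,\pm1)=r\), and the distance is \(0\) at \(\pm2\). Both \(f_0\) and \(f_1\) fix \(A\) pointwise and satisfy \(\pi\circ f_\ell=\pi\), using \(\pi \circ i \circ \pi = \pi\).

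Thus \(A \hookrightarrow K_r\) is an \(r\)-cofibration. Applying \Cref{lem:tensor-preserves-cof} shows that \(X\plusx A \hookrightarrow X\plusx K_r\) is an \(r\)-cofibration, and it is identified with \(X\sqcup X \hookrightarrow \Cylr(X)\).

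The only real risk is the lemma, whose proof the paper omits. If a direct argument is needed, I would verify the three conditions for \(X \plusx A \subseteq X \plusx U \subseteq X \plusx K_r\). All of them follow from \(\ell_1\)-distances, which are infinite whenever either coordinate distance is, and otherwise at least the \(K_r\)-coordinate distance. For the deformation retract, the homotopies are \(\id_X \plusx f_\ell\). The hypothesis \(r \ge 1\) is not really used beyond making \(K_r\) separated.
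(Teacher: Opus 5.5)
Your proof is correct and follows the same route as the paper. You exhibit \(\{-2,2\} \hookrightarrow K_r\) as an \(r\)-cofibration via the factorization through \(U=\{-2,-1,1,2\}\cong I_r\sqcup I_r\), and then transport it along \(X\plusx -\) using \Cref{lem:tensor-preserves-cof}; the only difference is that you spell out the retraction \(\pi\) and the one-step elementary \(r\)-homotopy \(i\circ\pi \rightsquigarrow_r \id_U\), which the paper leaves implicit.
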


\begin{proof}
    First observe that the subspace \(\{-2,2\}\) of \(K_r\) is inward-closed. Let \(U\) be the subspace \(\{-2,-1,1,2\}\) of \(K_r\), which is isomorphic to \(I_r \sqcup I_r\). It is outward-closed, and \((K_r,U)\) is an \(r\)-pair; also, the map \(\{-2,2\} \hookrightarrow U\) is a strong \(r\)-deformation retract. Thus, \(\iota \colon \{-2,2\} \hookrightarrow K_r\) is in \(\cof_r\) and it follows from \Cref{lem:tensor-preserves-cof} that
    \[X \sqcup X \cong X \plusx \{-2,2\} \xhookrightarrow{\id_X \plusx \iota } X \plusx K_r = \Cylr(X)\]
    is in \(\cof_r\) as well.
\end{proof}

%-------------------------------------------------------------

\subsection{The Mayer--Vietoris theorem}

Using the excision theorem and the exactness theorem of \Cref{sec:excision-exactness}, we now prove that page \(E^{r+1}\) of the magnitude-path spectral sequence satisfies a Mayer--Vietoris-type theorem with respect to \(r\)-cofibrations.

\begin{thm}[A Mayer--Vietoris Sequence for the MPSS]
\label{thm:mayer-vietoris}
    Fix \(r \in \N\). Let \(i \colon A \hookrightarrow X\) be an \(r\)-cofibration, and \(f \colon A \to B\) be any map in \(\NMet\). Their pushout
    \begin{equation*}
        \begin{tikzcd}[column sep=3.5ex,row sep=3ex]
            A \arrow[d, "f"'] \arrow[hook, r,"i"] \arrow[dr, phantom, "\scalebox{1.5}{\rotatebox{180}{$\lrcorner$}}" , very near end]
            & X \arrow{d}\arrow[d,"g"] \\
            B \arrow[hook, r,"j"'] & Y
        \end{tikzcd}
        \end{equation*}
 gives rise to a long exact sequence on page \(E^{r+1}\) of the MPSS: 
    \begin{equation*}
    \cdots \to E^{r+1}_{p,q}(A) \xrightarrow{(i_\ast, -f_\ast)} E^{r+1}_{p,q}(X) \oplus E^{r+1}_{p,q}(B) \xrightarrow{g_\ast \oplus j_\ast} E^{r+1}_{p,q}(Y) \xrightarrow{\partial} E^{r+1}_{p-r,q+r-1}(A) \to \cdots.
    \end{equation*}
\end{thm}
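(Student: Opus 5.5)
The plan is the classical one: build a map of long exact sequences of pairs and apply the standard algebraic lemma that turns a ladder with every third map an isomorphism into a Mayer--Vietoris sequence. Fix the factorization \(A \hookrightarrow U \hookrightarrow X\) from \Cref{def:r-cofib}, with \((X,U)\) an \(r\)-pair, \((U,A)\) a strong \(r\)-deformation retract, and \(A\) inward-closed in \(X\). Form the pushout \(W\) of \(A \hookrightarrow U\) along \(f\), so that diagram \eqref{diag:AUX_pushout} holds, with the left square and the outer rectangle pushouts and, by the pasting law, the right square also a pushout. By \Cref{lem:pushout_closed} applied to the right square, \((Y,W)\) is again an \(r\)-pair. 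By \Cref{prop:strong_def_pushout}, \(B \hookrightarrow W\) is a strong \(r\)-deformation retract.

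The key steps are as follows.

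First, apply \Cref{thm:exactness} to the \(r\)-pairs \((X,U)\) and \((Y,W)\). This gives two long exact sequences on page \(E^{r+1}\). The map \(g\) restricts to \(U \to W\), so it induces a map of \(r\)-pairs, and naturality of the construction gives a commutative ladder between the sequences. Naturality of the connecting map \(\partial_\ast\) comes from the fact that the short exact sequences on page \(r\) are natural in maps of pairs; the splittings need not be natural, but the connecting morphisms are.

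Second, by \Cref{thm:special-excision}, the maps \(E^{r+1}(X,U) \to E^{r+1}(Y,W)\) are isomorphisms.

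Third, replace \(U\) by \(A\) and \(W\) by \(B\). The inclusions \(A \hookrightarrow U\) and \(B \hookrightarrow W\) are \(r\)-homotopy equivalences, since a strong \(r\)-deformation retract is an \(r\)-homotopy equivalence. By \Cref{prop:r-homotopy-invariance} they induce isomorphisms on \(E^{r+1}\). The square relating \(E^{r+1}(A)\to E^{r+1}(B)\) and \(E^{r+1}(U)\to E^{r+1}(W)\) commutes because the left square of \eqref{diag:AUX_pushout} commutes.

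Finally, apply the Barratt--Whitehead lemma to the ladder
\[\cdots \to E^{r+1}(U) \to E^{r+1}(X) \to E^{r+1}(X,U) \xrightarrow{\partial_\ast} E^{r+1}_{\ast-r,\ast+r-1}(U) \to \cdots\]
mapping to the corresponding sequence for \((Y,W)\). Since every third vertical map is an isomorphism, this yields an exact sequence
\[E^{r+1}(U) \to E^{r+1}(X)\oplus E^{r+1}(W) \to E^{r+1}(Y) \to E^{r+1}_{\ast-r,\ast+r-1}(U).\]
The connecting map is \(\partial_\ast\circ(\text{excision})^{-1}\circ(\text{quotient})\), which has the correct bidegree shift \((-r,r-1)\). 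Substituting the isomorphisms \(E^{r+1}(A)\cong E^{r+1}(U)\) and \(E^{r+1}(B)\cong E^{r+1}(W)\) then identifies the maps with \((i_\ast,-f_\ast)\) and \(g_\ast\oplus j_\ast\), up to the usual sign convention.

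The main obstacle I expect is the naturality of the connecting homomorphism in \Cref{thm:exactness} with respect to the map of pairs \((X,U)\to(Y,W)\). The exact sequence on page \(r+1\) arises as the long exact homology sequence of the short exact sequence of \(r\)-bigraded complexes on page \(r\). That short exact sequence is produced inductively from the earlier pages, so I would check that at each page \(s\le r\) the short exact sequences form a morphism of short exact sequences of \(s\)-bigraded complexes. This should follow from the fact that \(E^s\) is a functor on filtered complexes and that the quotient maps \(\RC(X)\to\RC(X,U)\) are natural, together with the identification of \(E^{s+1}\) with the homology of \(E^s\) via the characteristic maps. Once this is checked, the rest of the argument is formal diagram chasing.
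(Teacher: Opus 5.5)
Your proposal is correct and follows the paper's proof essentially step for step. Like the paper, you factor through the \(r\)-pair \((X,U)\) and push out to the \(r\)-pair \((Y,W)\). You then combine the Exactness Theorem with Special Excision via the Barratt--Whitehead lemma (the paper cites it as Hatcher's exercise 38), and finally replace \(U\) and \(W\) by \(A\) and \(B\) using the strong \(r\)-deformation retracts and homotopy invariance on \(E^{r+1}\).

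Your extra concern about naturality of the connecting map is a point the paper leaves implicit. It holds directly, with no page-by-page induction needed. The page-\(r\) short exact sequences are induced by morphisms of filtered complexes, so the map of pairs gives a morphism of short exact sequences of \(r\)-bigraded complexes. The passage to \(E^{r+1}\) is natural because morphisms of spectral sequences respect the characteristic maps.
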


\begin{proof}
    By unpacking the definition of \(r\)-cofibration, we may consider the following pushout diagram, with the same notation as in~\Cref{thm:special-excision}
    \begin{equation*}
        \begin{tikzcd}[column sep=3.5ex,row sep=3.5ex]
        A \arrow[swap]{d}{f} \arrow[hook,swap]{r}{}  \arrow[dr, phantom, "\scalebox{1.5}{\rotatebox{180}{$\lrcorner$}}" , very near end]
        & U \arrow[]{d}{h} \arrow[hook,swap]{r}{}  \arrow[dr, phantom, "\scalebox{1.5}{\rotatebox{180}{$\lrcorner$}}" , very near end]& X \arrow[]{d}{g} \\
        B  \arrow[swap, hook]{r}{} 
        & W \arrow[hook]{r}{}&  Y.
        \end{tikzcd}
    \end{equation*} 
    Since \((X,U)\) is an \(r\)-pair, \((Y, W)\) is an \(r\)-pair by \Cref{lem:pushout_closed}. From \Cref{thm:exactness} (Exactness), we obtain a diagram of long exact sequences
    \begin{equation*}
    \begin{tikzcd}[column sep=4.2ex]
        \cdots \arrow{r} & E_{p,q}^{r+1}(U) \arrow{r} \arrow{r}{E^{r+1}(h)} & E_{p,q}^{r+1}(X) \arrow{r} \arrow{d}{E^{r+1}(g)} & E_{p,q}^{r+1}(X,U) \arrow{r} \arrow{d}{E^{r+1}(\bar{g})} & E_{p-r,q+r-1}^{r+1}(U) \arrow{r} \arrow{d}{E^{r+1}(h)} & \cdots \\
        \cdots \arrow{r} & E_{p,q}^{r+1}(W) \arrow{r} & E_{p,q}^{r+1}(Y) \arrow{r} & E_{p,q}^{r+1}(Y,W) \arrow{r} & E_{p-r,q+r-1}^{r+1}(W) \arrow{r} & \cdots.
    \end{tikzcd}
    \end{equation*}
    As \(A\) is inward-closed and \(U\) is outward-closed in \(X\), \Cref{thm:special-excision} (Excision) says that every third map in this diagram, \(E^{r+1}(\bar{g})\), is an isomorphism. We can therefore apply the result of exercise 38 of \cite[p.159]{Hatcher} to obtain the long exact sequence
    \[
    \begin{tikzcd}[column sep=small]
        \cdots \arrow{r} & E_{p,q}^{r+1}(U) \arrow{r} & E_{p,q}^{r+1}(X) \oplus E_{p,q}^{r+1}(W) \arrow{r} & E_{p,q}^{r+1}(Y) \arrow{r} & E_{p-r,q+r-1}^{r+1}(U) \arrow{r} & \cdots.
    \end{tikzcd}
    \]
    Finally, since \((U,A)\) is a strong \(r\)-deformation retract, \((W,B)\) is a strong \(r\)-deformation retract by \Cref{prop:strong_def_pushout}. In particular, we have \(U \simeq_r A\) and \(W \simeq_r B\). By \Cref{prop:r-homotopy-invariance}, every \(r\)-homotopy equivalence induces an isomorphism on page \(r+1\) of the MPSS; thus, there are isomorphisms \(E^{r+1}(U) \cong E^{r+1}(A)\) and \(E^{r+1}(W) \cong E^{r+1}(B)\) that we can use to obtain the long exact sequence in the statement of the theorem.
\end{proof}

%-------------------------------------------------------------
%-------------------------------------------------------------

\section{Brown categories of \(\N\)-metric spaces}
\label{sec:brown-cats-on-nmet}

In this section we will establish a family of homotopy theories on the category \(\NMet\), related to the pages of the magnitude-path spectral sequence.

%-------------------------------------------------------------

\subsection{Brown's categories of cofibrant objects}
\label{subsec:brown-cats}

We begin by presenting the definition of a Brown category as our setting for homotopy theory. This definition is the dual of Brown's \emph{category of fibrant objects}~\cite[Section 1]{Brown1973} and is sometimes called a \emph{category of cofibrant objects}. 

\begin{defn}\label{def:brown-cat}
    A \emph{Brown category} is a category \(\cn{C}\) with all finite coproducts and equipped with the data of two classes of morphisms, \(\mathcal{W}\) (the \emph{weak equivalences}) and \(\mathcal{C}\) (the \emph{cofibrations}), satisfying the following axioms.
        \begin{enumerate}
            \item The class \(\mathcal{W}\) satisfies the two-out-of-three property and contains the isomorphisms. 
            \item The class \(\mathcal{C}\) is closed under composition and contains the isomorphisms. 
            \item Given a morphism \(i \in \mathcal{C}\) and any morphism \(f\) in \(\cn{C}\), the pushout of \(i\) along \(f\) exists and is in \(\mathcal{C}\). If \(i\) is in \(\mathcal{W}\cap \mathcal{C}\), so is the pushout along \(f\).
            \item For every object \(X\), there exists a factorization of the codiagonal \(X \sqcup  X \to X\) as the composite of a cofibration followed by a weak equivalence.
            \item For every object \(X\), the map
        \(\emptyset \to X\) is a cofibration.
        \end{enumerate}
    The class \(\mathcal{W}\cap \mathcal{C}\) is called the class of
    \emph{acyclic cofibrations}.
\end{defn}

\begin{rem}
\label{rem:factorization-lemma}
    Brown's Factorization Lemma~\cite[p.421]{Brown1973} says these axioms imply that \emph{any} map in the category \(\cn{C}\) can be factorized as a composite of a cofibration followed by a map that is left-inverse to an acyclic cofibration---in particular, a weak equivalence.
\end{rem}

Axiom~(4) prescribes, for each object \(X\) in \(\cn{C}\), the existence of some object \(\Cyl(X)\) through which the codiagonal map factors,
\[X \sqcup  X \to \Cyl(X)\to X,\]
such that the first map is a cofibration and the second is a weak equivalence. The object \(\Cyl(X)\) is not uniquely determined by this property. If it can be chosen in such a way that the assignment \(X \mapsto \Cyl(X)\) is functorial, then \((\cn{C}, \mathcal{W}, \mathcal{C})\) is said to be a \emph{Brown category with functorial cylinder object}.

Our main theorem, \Cref{thm:NMet-Brown-cat}, states that for each natural number \(r \geq 1\) the category \(\NMet\) carries the structure of a Brown category with functorial cylinder object, in which the class of weak equivalences is the class \(\Ee_r\) of \(r\)-quasi-isomorphisms and the class of cofibrations is the class \(\cof_r\) of \(r\)-cofibrations. The proof consists in checking that Axioms~(1)--(5) are satisfied. Of these, Axioms~(2) and (3) take the most work, which is the focus of the next section.

%-------------------------------------------------------------
\subsection{Stability of cofibrations}

In this section we prove stability properties of the class \(\cof_r\) of \(r\)-cofibrations in \(\NMet\). We first show stability under composition. Then we prove that the class \(\cof_r\) and the class \(\cof_r\cap \Ee_r\) of acyclic \(r\)-cofibrations are both stable under pushout.

\begin{prop}
\label{prop:cof_composition}
    For each \(r \in \N\), the class $\cof_r$ is stable under composition.
\end{prop}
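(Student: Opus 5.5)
Write the two \(r\)-cofibrations as \(i_1 \colon A \hookrightarrow X\) and \(i_2 \colon X \hookrightarrow Z\). Choose factorizations \(A \hookrightarrow U \hookrightarrow X\) and \(X \hookrightarrow V \hookrightarrow Z\) as in \Cref{def:r-cofib}. Let \(\pi_U \colon U \to A\) and \(\pi_V \colon V \to X\) be the retractions witnessing that \((U,A)\) and \((V,X)\) are strong \(r\)-deformation retracts. We must show two things: that \(A\) is inward-closed in \(Z\), and that \(A \hookrightarrow Z\) admits a factorization through some \(W\) with \((W,A)\) a strong \(r\)-deformation retract and \((Z,W)\) an \(r\)-pair.

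\emph{Inward-closedness.} Let \(a \in A\) and \(z \in Z\) with \(\dis_Z(z,a) < \infty\). Since \(a \in X\) and \(X\) is inward-closed in \(Z\), we get \(z \in X\). The inclusion is an isometry, so \(\dis_X(z,a) < \infty\). Since \(A\) is inward-closed in \(X\), it follows that \(z \in A\).

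\emph{Choice of the middle space.} Take \(W\) to be the pullback of \(\pi_V\) along \(U \hookrightarrow X\). By \Cref{lem:NMet_pushout}, this is the subspace \(W = \{v \in V \mid \pi_V(v) \in U\}\) of \(V\), hence also a subspace of \(Z\). By \Cref{lem:strong_def_pullback}, \(U \hookrightarrow W\), with retraction \(\pi_V|_W\), is a strong \(r\)-deformation retract. On \(U\) this inclusion is the inclusion \(U \subseteq X \subseteq V\), because \(\pi_V\) fixes \(X\). Composing with \(A \hookrightarrow U\) and applying \Cref{lem:strong_def_compose}, we conclude that \(A \hookrightarrow W\) is a strong \(r\)-deformation retract. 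The composite \(A \hookrightarrow W \hookrightarrow Z\) is \(i_2 \circ i_1\).

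\emph{The pair \((Z,W)\).} This is the main step. Fix \(w \in W\) and \(z \in Z \setminus W\). We need \(\dis_Z(w,z) = \infty\) and \(\dis_Z(z,w) \geq r\), and we split into two cases.

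\begin{enumerate}
    \item If \(z \notin V\), both inequalities follow from the fact that \((Z,V)\) is an \(r\)-pair, since \(w \in V\).
    \item If \(z \in V \setminus W\), then \(\pi_V(z) \in X \setminus U\) while \(\pi_V(w) \in U\). Since \(\pi_V\) is 1-Lipschitz and distances in \(V\) agree with those in \(Z\), we have \(\dis_Z(w,z) \geq \dis_X(\pi_V(w),\pi_V(z)) = \infty\), because \(U\) is outward-closed in \(X\). Likewise \(\dis_Z(z,w) \geq \dis_X(\pi_V(z),\pi_V(w)) \geq r\), because \((X,U)\) is an \(r\)-pair.
\end{enumerate}

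Hence \((Z,W)\) is an \(r\)-pair, and \(i_2 \circ i_1\) lies in \(\cof_r\). The key point is choosing \(W\) as the preimage \(\pi_V^{-1}(U)\). With that choice, the Lipschitz property of \(\pi_V\) transports both \(r\)-pair conditions from \((X,U)\) to the part of \(Z \setminus W\) lying inside \(V\).
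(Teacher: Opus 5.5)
Your proof is correct and follows the paper's argument essentially step for step. You make the same choice of \(W = \pi_V^{-1}(U)\) as a pullback, use \Cref{lem:strong_def_pullback} and \Cref{lem:strong_def_compose} to get the strong \(r\)-deformation retract, and verify that \((Z,W)\) is an \(r\)-pair by the same two-case split (\(z \notin V\) versus \(z \in V \setminus W\)), relying on \(\pi_V\) being 1-Lipschitz. Your inward-closedness step is slightly more explicit than the paper's: it also invokes that \(A\) is inward-closed in \(X\), which the paper leaves implicit.
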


\begin{proof}
    Suppose we are given \(r\)-cofibrations \(i \colon A \hookrightarrow X\) and \(j \colon X \hookrightarrow Y\), and a factorization of each of them satisfying \Cref{def:r-cofib}:
    \begin{equation}
    \label{eq:comp-cofib}
    \begin{tikzcd}
        A \arrow[hook]{rr}{i} \arrow[hook, shift left=1]{dr}{i_1} && X \arrow[hook]{rr}{j} \arrow[hook, shift left=1]{dr}{j_1} && Y \\
        & U \arrow[two heads, shift left=1]{ul}{\pi_U} \arrow[hook]{ur}{i_2} && V \arrow[two heads, shift left=1]{ul}{\pi_V} \arrow[hook]{ur}{j_2} 
    \end{tikzcd}
    \end{equation}
    Our aim is to prove that \(j \circ i \colon A \hookrightarrow Y\) is an \(r\)-cofibration. We first remark that since \(A\) is a subspace of  \(X\) and \(X\) is inward-closed in \(Y\), we have that \(A\) is inward-closed in \(Y\) and condition \eqref{cond:r-cofib_1} of \Cref{def:r-cofib} is satisfied.
    
    Let us consider diagram \eqref{eq:comp-cofib} and form the pullback of \(\pi_V\) along the subspace inclusion \(i_2\) 
    to obtain a span \((\pi_W,i_W)\colon U \leftarrow W \xhookrightarrow{} V\). By \Cref{lem:strong_def_pullback} there exists \(k\) such that \(k \colon U \rightleftarrows W : \pi_W\) is a strong \(r\)-deformation retract; we claim the factorization 
    \[A \xhookrightarrow{k \circ i_1} W \xhookrightarrow{j_2 \circ i_W} Y\]
    satisfies condition \eqref{cond:r-cofib_2} of \Cref{def:r-cofib}. By \Cref{lem:strong_def_compose}, we can compose the strong \(r\)-deformation retracts \(k \colon U \rightleftarrows W : \pi_W\) and \(i_1 \colon A \rightleftarrows U : \pi_U\) to see that  \(k\circ i_1 \colon A \rightleftarrows W : \pi_U\circ \pi_W\) is a strong \(r\)-deformation retract.

    Let us show that \((Y,W)\) is an \(r\)-pair. Recall that \(W = \{v \in V \mid \pi_V(v) \in U\}\). Suppose \(w \in W\) and \(y \in Y\) satisfy \(\dis(w,y) < \infty\). Since \(V\) is outward-closed in \(Y\) and \(w\) belongs to \(V\), the point \(y\) must be in \(V\). We can apply \(\pi_V\) to see that \(\dis(\pi_V(w), \pi_V(y)) \leq \dis(w,y) < \infty\), which tells us that \(\pi_V(y)\) must be in \(U\), since \(U\) is outward-closed in \(X\). It follows that \(y\) is in \(W\), and hence that \(W\) is outward-closed in \(Y\). It remains to show that for  \(y\in Y\setminus W\) and \(w\in W\) we have \(\dis(y,w)\geq r\). We study two cases, depending whether \(y\in Y\setminus V\) or \(y\in V\setminus W\). If \(y \in Y \setminus V\), then \(\dis(y,w) \geq r\) because \((Y,V)\) is an $r$-pair. If \(y \in V \setminus W\), then \(\pi_V(y) \in X \setminus U\) and \(\pi_V(w) \in U\), so \(\dis(\pi_V(y), \pi_V(w)) \geq r\) because \((X,U)\) is an $r$-pair. But \(\pi_V\) is a map in \(\NMet\), so we must have \(\dis(y,w) \geq \dis(\pi_V(y), \pi_V(w)) \geq r\) as claimed.
\end{proof}

\begin{prop}
\label{prop:cof_pushout}
    For each \(r \in \N\), the classes $\cof_r$ and $\cof_r \cap \Ee_r$  are stable under pushout along any map in \(\NMet\).
\end{prop}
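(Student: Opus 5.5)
Let \(i \colon A \hookrightarrow X\) be an \(r\)-cofibration with factorization \(A \hookrightarrow U \hookrightarrow X\) as in \Cref{def:r-cofib}, and let \(f \colon A \to B\) be any map. I would form the iterated pushout of \Cref{thm:special-excision}: first push out \(A \hookrightarrow U\) along \(f\) to get \(h \colon U \to W\) and \(B \hookrightarrow W\), then push out \(U \hookrightarrow X\) along \(h\) to get \(g \colon X \to Y\) and \(W \hookrightarrow Y\). By the pasting law the outer rectangle is the pushout of \(i\) along \(f\), so \(j \colon B \hookrightarrow Y\) factors as \(B \hookrightarrow W \hookrightarrow Y\). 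All three maps are subspace inclusions by \Cref{lem:NMet_pushout}.

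Stability of \(\cof_r\) then reduces to three checks, each supplied by an earlier result. First, \(B\) is inward-closed in \(Y\): this is \Cref{lem:pushout_closed} applied to the outer rectangle, since \(A\) is inward-closed in \(X\). Second, \((Y,W)\) is an \(r\)-pair: this is \Cref{lem:pushout_closed} applied to the right square, since \((X,U)\) is an \(r\)-pair. Third, \((W,B)\) is a strong \(r\)-deformation retract: this is \Cref{prop:strong_def_pushout} applied to the left square. Together these show \(j \in \cof_r\).

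For acyclic cofibrations, assume in addition that \(i \in \Ee_r\). I must show that \(E^r(j)\) is a quasi-isomorphism, i.e.\ that \(E^{r+1}(j)\) is an isomorphism. This is where I expect the real work to lie, since passing through page \(r+1\) is only valid if the two notions coincide for these maps. I would first argue that \(E^{r+1}(i)\) is an isomorphism and then use the Mayer--Vietoris sequence of \Cref{thm:mayer-vietoris} on page \(r+1\). Note that \(E^r(i)\) being a quasi-isomorphism is, by definition, the same as \(H(E^r(i)) \cong E^{r+1}(i)\) being an isomorphism, so that direction is immediate. The comparison then uses naturality: the sequence for \(i\) pushed out along \(f\) maps to the sequence for \(\id_A\) pushed out along \(f\), or one can argue directly from the exactness. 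Since \(E^{r+1}(i)\) is an isomorphism, the map \((i_*,-f_*)\) is injective, and the connecting maps vanish. A standard diagram chase in the short exact sequences
\[0 \to E^{r+1}(A) \to E^{r+1}(X)\oplus E^{r+1}(B) \to E^{r+1}(Y) \to 0\]
then shows that \(j_*\) is an isomorphism: surjectivity comes from writing \((x,b)\) with \(x = i_*(a)\), and injectivity from the injectivity of \(i_*\).

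A cleaner alternative for the last step is the following. The map \(E^{r+1}(U) \to E^{r+1}(X)\) is an isomorphism, since \(E^{r+1}(A) \cong E^{r+1}(U)\) by \Cref{prop:r-homotopy-invariance}. The long exact sequence of \Cref{thm:exactness} then gives \(E^{r+1}(X,U) = 0\). By \Cref{thm:special-excision}, \(E^{r+1}(Y,W) = 0\), so exactness for the \(r\)-pair \((Y,W)\) shows \(E^{r+1}(W) \to E^{r+1}(Y)\) is an isomorphism. Composing with the \(r\)-homotopy equivalence \(B \simeq_r W\) gives \(E^{r+1}(j)\) an isomorphism. I would use this route, since it avoids the sign conventions of Mayer--Vietoris.
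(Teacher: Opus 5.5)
Your proof is correct and takes essentially the same approach as the paper. The cofibration part applies \Cref{lem:pushout_closed} and \Cref{prop:strong_def_pushout} to the pasted squares, and your chosen acyclic route argues exactly as the paper does: \(E^{r+1}(X,U)=0\) by exactness, then \(E^{r+1}(Y,W)=0\) by special excision, then exactness for \((Y,W)\) together with \(B\hookrightarrow W\) lying in \(\mathcal{H}_r\); your Mayer--Vietoris diagram chase is also a valid alternative, since \Cref{thm:mayer-vietoris} does not depend on this proposition.
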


\begin{proof}
    Let \(i \colon A \hookrightarrow X\) be an \(r\)-cofibration, and let \(f \colon A \to B\) be any map in \(\NMet\). Let \(j \colon B \to Y\) be the pushout of \(i\) along \(f\). By \Cref{lem:pushout_closed} the map \(j\) is a subspace inclusion. Choose a factorization \(A \xhookrightarrow{i_1} U \xhookrightarrow{i_2} X\) of \(i\) satisfying the conditions in \Cref{def:r-cofib}. By the pasting law, forming the pushout of \(i_1\) along \(f\) gives us a factorization \(B \hookrightarrow W \hookrightarrow Y\) of \(j\) fitting into a diagram
    \[
    \begin{tikzcd}
        A \arrow[swap]{d}{f} \arrow[hook]{r}{i_1}  &   U \arrow{d}{g|_U} \arrow[hook]{r}{i_2} & X \arrow{d}{g}  \\
       B \arrow[hook]{r}{j_1} & W \arrow[hook]{r}{j_2} & Y
    \end{tikzcd}
    \]
    in which both squares and the outer rectangle are pushouts.
    
    Since \(A\) is inward-closed in \(X\) and \((X,U)\) is an \(r\)-pair, \Cref{lem:pushout_closed} tells us \(B\) is inward-closed in \(Y\) and \((Y,W)\) is an \(r\)-pair. As \((U,A)\) is a strong \(r\)-deformation retract, \Cref{prop:strong_def_pushout} tells us that \((W,B)\) is a strong \(r\)-deformation retract. Thus, conditions \eqref{cond:r-cofib_1}  and \eqref{cond:r-cofib_2} of \Cref{def:r-cofib} are satisfied and \(j\) is an \(r\)-cofibration.

    Assume in addition that \(i\) is in \(\Ee_r\). Since \(i_1\in\mathcal H_r\), it is an \(r\)-quasi-isomorphism and, by the two-out-of-three property, so is \(i_2\). That is, \(E^{r+1}(i_2)\) is an isomorphism.  \Cref{thm:exactness} (the Exactness Theorem) gives us the long exact sequence
    \[\cdots \to E^{r+1}_{p,q}(U) \xrightarrow{E^{r+1}(i_2)} E^{r+1}_{p,q}(X) \xrightarrow{} E^{r+1}_{p,q}(X, U) \xrightarrow{\partial_\ast} E^{r+1}_{p-r,q+r-1}(U) \to \cdots\]
    and \(E^{r+1}(X,U) = 0\). By  \Cref{thm:special-excision} (the Special Excision Theorem) we deduce that \(E^{r+1}(Y,W) = 0\) and, because \((Y,W)\) is an \(r\)-pair, we can now apply \Cref{thm:exactness} in the case of \((Y,W)\) to see that the map \(E^{r+1}(j_2)\) is an isomorphism. This implies that \(j\in\Ee_r\).
\end{proof}

%-------------------------------------------------------------
\subsection{The main theorem}

We now collect our previous results to establish a family of Brown category structures on \(\NMet\).

\begin{thm}
\label{thm:NMet-Brown-cat}
    For each natural number \(r \geq 1\), \((\NMet, \Ee_r, \cof_r)\) is a Brown category with functorial cylinder object.
\end{thm}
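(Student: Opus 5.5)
The plan is to verify Axioms (1)--(5) of \Cref{def:brown-cat} one at a time, with \(\Cylr\) from \Cref{def:r-cylinder} serving as the functorial cylinder. First, \(\NMet\) has finite coproducts: the disjoint union with infinite distances between the summands, and \(\emptyset\) as initial object.

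\textbf{Axiom (1).} A map lies in \(\Ee_r\) exactly when \(E^{r+1}(f)\) is an isomorphism, because \(E^{r+1}\) is the homology of \((E^r,\Delta^r)\). Isomorphisms of bigraded modules satisfy two-out-of-three and are preserved by the functor \(E^{r+1}\), so \(\Ee_r\) satisfies two-out-of-three and contains all isomorphisms.

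\textbf{Axioms (2), (3) and (5).} Closure under composition is \Cref{prop:cof_composition}. Isomorphisms and the maps \(\emptyset\hookrightarrow X\) are cofibrations by \Cref{eg:special-cofibs}. For Axiom (3), the pushout of a subspace inclusion along any map exists by \Cref{lem:NMet_pushout}. Stability of \(\cof_r\) and of \(\cof_r\cap\Ee_r\) under pushout is \Cref{prop:cof_pushout}.

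\textbf{Axiom (4).} This is where a genuine argument is needed, although it is short. I would factor the codiagonal as
\[X\sqcup X\hookrightarrow \Cylr(X)\xrightarrow{\;q\;} X,\]
where \(q=\id_X\plusx c\) and \(c\colon K_r\to\pt\) is the unique map. The first map is an \(r\)-cofibration by \Cref{prop:cylr-inclusions}, and the composite is the codiagonal because \(c\) restricted to \(\{-2,2\}\) is constant.

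To see that \(q\) is a weak equivalence, I would show \(c\in\mathcal H_r\). Let \(s\colon \pt\to K_r\) pick out \(0\); then \(c\circ s=\id\). For \(s\circ c\sim_r\id_{K_r}\), define \(g\colon K_r\to K_r\) by \(g(\pm1)=\pm1\), \(g(0)=0\) and \(g(\pm2)=\pm1\). Then \(g\) is 1-Lipschitz, because all the finite distances it must preserve (\(\pm2\to\pm1\) and \(0\to\pm1\)) are either \(r\) or become \(0\). Moreover \(g\rightsquigarrow_r \id\) fails, but \(\id\rightsquigarrow_r g\) holds, since \(\dis(x,g(x))\le r\) for every \(x\). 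Next, \(s\circ c\) is the constant map at \(0\), and the constant map \(\rightsquigarrow_r g\) holds because \(\dis(0,g(x))\le r\) for all \(x\): this distance is \(0\) or \(r\), since \(g(x)\in\{-1,0,1\}\). This gives a zig-zag \(s\circ c\rightsquigarrow_r g \leftsquigarrow_r \id\) (zig-zags are allowed in either direction), so \(c\in\mathcal H_r\).

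By \Cref{lem:tensor-preserves-cof}, \(q=\id_X\plusx c\) then lies in \(\mathcal H_r\), using \(X\plusx\pt\cong X\). Hence \(q\in\Ee_r\) by \Cref{prop:r-homotopy-invariance}. Functoriality of the cylinder holds because \(-\plusx K_r\) is a functor and both maps are natural in \(X\). The hypothesis \(r\ge1\) is used here, since \(K_r\) and \Cref{prop:cylr-inclusions} require it.

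\textbf{Main obstacle.} The only step that needs checking by hand is the Lipschitz verification for \(g\) and for the elementary homotopies on \(K_r\). All of the heavy lifting was already done in \Cref{prop:cof_composition} and \Cref{prop:cof_pushout}.
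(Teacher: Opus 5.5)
Your proposal is correct and follows the paper's proof essentially step for step. Axioms (1), (2), (3) and (5) rest on the same cited results. For Axiom (4) you use the same factorization of the codiagonal through \(\Cylr(X)\), and your auxiliary map \(g\) is exactly the paper's \(f\), giving the same zig-zag \(s\circ c\rightsquigarrow_r g\leftsquigarrow_r \id_{K_r}\). Your justification of Axiom (1), via the equivalence between \(f\in\Ee_r\) and \(E^{r+1}(f)\) being an isomorphism, is if anything a slightly cleaner statement of the same argument.
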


\begin{proof}
    Axiom~(1) is straightforward, using \Cref{prop:r_htpy_equiv_compose} and the functoriality of the spectral sequence. We have seen in \Cref{eg:special-cofibs}~\eqref{eg:empty-cofib} that for each \(r \in \N\) the class \(\cof_r\) contains every initial map \(\emptyset \to X\), so Axiom (5) is satisfied. We have seen in \Cref{eg:special-cofibs}~\eqref{eg:iso-cofib} that \(\cof_r\) contains all isomorphisms, and in \Cref{prop:cof_composition} that it is closed under composition, so Axiom (2) is satisfied. Axiom (3) is proved in Proposition \ref{prop:cof_pushout}.

    We claim that for \(r \geq 1\) the functor \(\Cylr = - \plusx K_r \colon \NMet \to \NMet\) satisfies Axiom (4): that for each \(\N\)-metric space \(X\) there is a factorization of the codiagonal map as \(X \sqcup X \to \Cylr(X) \to X\) where the first map belongs to \(\cof_r\) and the second to \(\Ee_r\). Let \(\iota_0 \colon \pt \hookrightarrow K_r\) be the inclusion of the point \(0\), and let  \(\terminal \colon K_r \to \pt\) be the terminal map; then \(\terminal \circ \iota_0 = \id_{\pt}\). Let \(f\colon K_r\to K_r\) be defined by \(f(2)=1\), \(f(-2)=-1\), and \(f(x)=x\) otherwise. Then \(f\) is a map in \(\NMet\) and we have
    \[\iota_0 \circ \terminal \rightsquigarrow_r f\leftsquigarrow_r \id_{K_r}. \]
    Thus, \(\terminal \colon K_r \to \pt\) belongs to \(\mathcal{H}_r\). It follows from \Cref{lem:tensor-preserves-cof} that
    \[\Cylr(X) = X\plusx K_r  \xrightarrow{\id_X \plusx \terminal} X \plusx \pt \cong X\]
    belongs to \(\mathcal{H}_r\) and hence to \(\Ee_r\). We have also seen, in \Cref{prop:cylr-inclusions}, that \(id_X \plusx \iota:X\sqcup X\to\Cylr(X)\) belongs to \(\cof_r\). Since the composite \((\id_X\plusx \iota)\circ (\id_X\plusx \terminal)\) is the codiagonal map, this proves the claim.
\end{proof}

%-------------------------------------------------------------
%-------------------------------------------------------------

\section{Homotopy pushouts, suspensions and spheres}
\label{sec:homotopy-pushouts}

For each natural number \(r\), we use homotopy pushouts in the Brown category \((\NMet, \Ee_r, \cof_r)\) to define a {suspension functor} \(\Sigma_r\). This leads to a bigraded family of {sphere objects} in the category of \(\N\)-metric spaces. We analyse the MPSS of the spheres, making use of a convenient small model for the homotopy pushout. A comparison of different models also yields an example showing that there are strictly more \(r\)-quasi-isomorphisms than \(r\)-homotopy equivalences.

%-------------------------------------------------------------

\subsection{Homotopy pushouts}
\label{subsec:homotopy-pushouts}

We begin by discussing homotopy colimits, and in particular homotopy pushouts, in a Brown category. We note how homotopy pushouts can be modelled by a (usual) pushout involving the cylinder object.

Given a Brown category \((\cn{C}, \mathcal{W}, \mathcal{C})\), let us denote by \(\Ho(\cn{C})\) its homotopy category---that is, the localization of \(\cn{C}\) at the class \(\mathcal{W}\) of weak equivalences. For any indexing category \(\cn{D}\), we can define homotopy colimits over \(\cn{D}\) in \(\cn{C}\), when they exist, via a left adjoint \(\hocolim_\cn{D}\) to the constant diagram functor \(\Ho(\cn{C})\to \Ho(\cn{C}^\cn{D})\). In the case that \(\cn{D}\) is a finite direct category this adjoint does exist, by the dual of results of Cisinski~\cite{Cisinski2010}, using a Reedy cofibration category structure on \(\cn{C}^\cn{D}\) as explained by Meier~\cite[\S3]{Meier16}. (See also~\cite[Chapter 9]{RB09}.)

The adjoint is constructed as follows. Given a functor \(F \colon \cn{D} \to \cn{C}\), take a Reedy cofibrant replacement \(F' \to F\) in \(\cn{C}^\cn{D}\). The colimit of \(F'\) exists and in the category \(\Ho(\cn{C})\) one has \(\hocolim_\cn{D} F \cong \colim_\cn{D} F'\).

In particular, one can compute the homotopy pushout of a span \((f,g):X \leftarrow A \rightarrow Y\) as the pushout of the span \((f',g'):X' \leftarrow A \rightarrow Y'\) where \(f=pf'\) and \(g=qg'\) have each been factorized as a cofibration followed by a weak equivalence.

Note that a Brown category is left proper---that is, weak equivalences are stable under pushout along cofibrations---by the dual of~\cite[\S4, Lemma 2]{Brown1973}. It follows from this that if one of the maps, say \(f\), is a cofibration then the (usual) pushout  is the homotopy pushout. This means that a general homotopy pushout can be computed by replacing only one of the maps \(f\) or \(g\) by a cofibration.

In the Brown category \((\NMet, \cof_r, \Ee_r)\), this gives the following result, where the \(r\)-cylinder \(\Cylr\) has been defined in \Cref{def:r-cylinder}.

\begin{prop}\label{prop:r-mapping-cylinder}
    Fix \(r \geq 1\). Given a span \(X \xleftarrow{f} A \xrightarrow{g} Y\) in the Brown category \((\NMet,\Ee_r,\cof_r)\), its homotopy pushout is the pushout of the span
    \begin{equation}\label{span1}
        \xymatrix{X\sqcup Y& A\sqcup A\ar[rr]^-{(\id_A,-2)\sqcup (\id_A,2)}\ar[l]_{f\sqcup g} && \Cylr(A)}.
    \end{equation}
\end{prop}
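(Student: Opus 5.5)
The plan is to reduce the statement to the general recipe recalled just before it: in a left proper Brown category, a homotopy pushout can be computed as an ordinary pushout once the span has been replaced by a Reedy cofibrant one. I will replace the span \(X \xleftarrow{f} A \xrightarrow{g} Y\) by a weakly equivalent span built from \(\Cylr(A)\), and then recognise its ordinary pushout as the pushout of span \eqref{span1}.

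\textbf{Step 1: factor the codiagonal.} Write \(\iota \colon A\sqcup A \hookrightarrow \Cylr(A)\) for the map \((\id_A,-2)\sqcup(\id_A,2)\). By \Cref{prop:cylr-inclusions}, \(\iota\) is in \(\cof_r\). In the proof of \Cref{thm:NMet-Brown-cat} we saw that \(p=\id_A\plusx\terminal \colon \Cylr(A)\to A\) lies in \(\mathcal H_r\subseteq \Ee_r\), and that \(p\circ\iota\) is the codiagonal.

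\textbf{Step 2: factor \(f\) and \(g\) through the ends of the cylinder.} Let \(\iota_{-2},\iota_2 \colon A\to\Cylr(A)\) be the two end inclusions. Form the pushout \(M_f\) of \(\iota_{-2}\) along \(f\), and the pushout \(M_g\) of \(\iota_2\) along \(g\). These are mapping cylinders.
\begin{itemize}
\item Each end inclusion is \(\iota\) precomposed with a coproduct inclusion \(A\hookrightarrow A\sqcup A\), which is a \(0\)-cofibration. Checking directly against \Cref{def:r-cofib}, each end inclusion is in \(\cof_r\) (the end \(A\plusx\{\pm2\}\) is inward-closed, and the factorisation through \(A\plusx\{-2,-1,1,2\}\) restricts appropriately).
\item By \Cref{prop:cof_pushout}, \(A\to M_f\) (via \(\Cylr(A)\), at the other end) and \(X\hookrightarrow M_f\) are cofibrations. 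Because \(\iota_{-2}\) is a strong \(r\)-deformation retract composed with an \(r\)-pair inclusion, \(X\hookrightarrow M_f\) is an acyclic cofibration.
\item The map \(\iota_{-2}\) lies in \(\Ee_r\) by two-out-of-three, since \(p\circ\iota_{-2}=\id_A\) and \(p\in\Ee_r\). Stability of acyclic cofibrations under pushout (\Cref{prop:cof_pushout}) then shows that \(X\to M_f\) is in \(\Ee_r\). By two-out-of-three again, the induced retraction \(M_f\to X\) (built from \(p\) and \(f\)) is in \(\Ee_r\).
\end{itemize}
The result is factorisations \(f = (M_f\to X)\circ(A\xrightarrow{\iota_2} M_f)\) and \(g=(M_g\to Y)\circ(A\xrightarrow{\iota_{-2}}M_g)\), each a cofibration followed by a weak equivalence. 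A simpler route is to replace only \(f\), which left properness permits. That is cleaner: the homotopy pushout is then the ordinary pushout of \(M_f \leftarrow A \to Y\).

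\textbf{Step 3: identify the ordinary pushout.} Using pasting of pushouts, the pushout of \(M_f\leftarrow A\xrightarrow{g}Y\) equals the colimit of the diagram formed by \(X\xleftarrow{f}A\xrightarrow{\iota_{-2}}\Cylr(A)\xleftarrow{\iota_2}A\xrightarrow{g}Y\). That colimit is exactly the pushout of span \eqref{span1}, since gluing along \(A\sqcup A\) amounts to gluing along both ends simultaneously. This is formal colimit manipulation in \(\NMet\), which has the relevant pushouts along subspace inclusions by \Cref{lem:NMet_pushout}.

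\textbf{Main obstacle.} The delicate point is confirming that the single end inclusion \(\iota_2\colon A\to\Cylr(A)\) is an acyclic \(r\)-cofibration, together with the corresponding claim for \(\iota_{-2}\). I will try to exhibit this directly. The subspace \(\{2\}\) is inward-closed in \(K_r\). The factorisation is \(\{2\}\hookrightarrow\{1,2\}\hookrightarrow K_r\), with \(\{1,2\}\) a strong \(r\)-deformation retract onto \(\{2\}\) via the map \(1\mapsto2\), which is 1-Lipschitz. One must check that \((K_r,\{1,2\})\) is an \(r\)-pair: \(\{1,2\}\) is outward-closed, and points outside it are at distance \(\geq r\) from it. Then \Cref{lem:tensor-preserves-cof} transfers the conclusion to \(A\plusx-\). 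Acyclicity follows because \(\Cylr(A)\simeq_r A\).
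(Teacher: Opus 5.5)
Your plan is the paper's: your \(M_f\) is, by pasting, the paper's \(\Cylr(f)\). The factorization of \(f\) through it is exactly the Brown factorization the paper invokes, and your Step~3 is its pasting argument. The gap is in the one property everything rests on: that \(A \to M_f\), i.e.\ \(A \xrightarrow{\iota_2} \Cylr(A) \to M_f\), is an \(r\)-cofibration. You attribute this to \Cref{prop:cof_pushout}. But applied to the square defining \(M_f\) (the pushout of \(\iota_{-2}\) along \(f\)), that proposition only says that \(X \to M_f\) is a cofibration. The other leg \(\Cylr(A) \to M_f\) is a pushout of the arbitrary map \(f\) and need not even be injective. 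So what your ``main obstacle'' paragraph verifies---that \(\iota_{\pm 2} \colon A \to \Cylr(A)\) are acyclic \(r\)-cofibrations---does not transfer to the composite. Without cofibrancy of \(A \to M_f\), you cannot use left properness to identify the ordinary pushout of \(M_f \leftarrow A \to Y\) with the homotopy pushout.

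The missing step is the top square of the paper's diagram: by pasting, \(M_f\) is also the pushout of \(\iota \colon A \sqcup A \to \Cylr(A)\) along \(f \sqcup \id_A \colon A \sqcup A \to X \sqcup A\).
\begin{enumerate}
\item The induced map \(u \colon X \sqcup A \to M_f\) is in \(\cof_r\) by \Cref{prop:cylr-inclusions} and \Cref{prop:cof_pushout}.
\item The coproduct inclusion \(A \to X \sqcup A\) is in \(\cof_r\), being the pushout of \(\emptyset \to X\) (\Cref{eg:special-cofibs}) along \(\emptyset \to A\).
\item Their composite, which is your map \(A \to M_f\), is in \(\cof_r\) by \Cref{prop:cof_composition}.
\end{enumerate}
A direct check also works: in \(M_f\) the end \(A \plusx \{2\}\) is inward-closed and is a strong \(r\)-deformation retract of \(A \plusx \{1,2\}\), which forms an \(r\)-pair with \(M_f\). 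With this repaired, the rest of your argument goes through.

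A smaller point concerns your second bullet. Factoring \(\iota_{-2}\) as a strong \(r\)-deformation retract followed by an \(r\)-pair inclusion only makes \(X \to M_f\) a cofibration. Its acyclicity comes from your third bullet: \(\iota_{-2} \in \Ee_r\) by two-out-of-three, then \Cref{prop:cof_pushout}.
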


\begin{proof}
    Consider the following diagram, in which every square is a pushout square:
    \begin{equation*}
        \xymatrix@R=3ex@C=5ex{ & A\sqcup A\ar[rr]^-{(\id_A,-2)\sqcup (\id_A,2)}\ar[d]^{f\sqcup \id_A} && \Cylr(A)\ar[d]\\
        A\ar[r]^-{i_2}\ar[d]_{g} & X\sqcup A\ar[rr]^-{u}\ar[d]^{\id_X\sqcup g} && \Cylr(f)\ar[d]\\
       Y\ar[r]^-{i_2} & X\sqcup Y\ar[rr]&& M_r(f,g).
        }
    \end{equation*}
    In particular the span \eqref{span1} and  the span \((g,u\circ i_2) \colon Y\leftarrow A\rightarrow \Cylr(f)\) have the same pushout, denoted \(M_r(f,g)\). According to the Brown Factorization Lemma (see \Cref{rem:factorization-lemma}), \(u\circ i_2\) is a cofibrant replacement of \(f\), hence \(M_r(f,g)\) is a representative of the homotopy pushout of the span \((f,g) \colon X \leftarrow A \rightarrow Y\).
\end{proof}

\begin{defn}
    The pushout \(M_r(f,g)\) in \Cref{prop:r-mapping-cylinder} will be called the \demph{mapping \(r\)-cylinder} of the pair \((f,g)\).
\end{defn}

%-------------------------------------------------------------

\subsection{A `short' homotopy pushout}
\label{subsec:short-mapping-space}

To allow for more efficient computations---in particular, to aid the study of suspensions and spheres in \Cref{subsec:spheres}---we are going to build a `shorter' model for the homotopy pushout. 

\begin{defn}\label{def:short-r-cylinder}
    Fix \(r > 0\). Let \(K_r\) be the directed \(r\)-interval, as in \Cref{def:r-cylinder}. Let \(J_r\) denote the subspace of \(K_r\) on the set \(\{-1,0,1\}\), and let \(J_r^\vee\) denote the pushout of the span \(\pt \leftarrow J_r \hookrightarrow K_r\). That is, \(J_r^\vee=(\{-2,\smallbullet,2\}, \dis)\) with \(\dis(-2,\smallbullet)=\dis(2,\smallbullet)=r\) and \(\dis(x,y)=+\infty\) otherwise. We can represent \(J_r^\vee\) graphically as
    \[\begin{tikzcd}[column sep=4ex]
        -2 \arrow{r}{r} & \smallbullet & 2 \arrow[swap]{l}{r} 
    \end{tikzcd}.
    \]
    Given \((f,g):B \leftarrow A \rightarrow C\) in \(\NMet\),
    let \(M_r^{\short}(f,g)\) denote the pushout of the span
    \[\xymatrix{B\sqcup C&A\sqcup A\ar[rr]^-{(\id_A,-2)\sqcup (\id_A,2)}\ar[l]_{f\sqcup g} && A\plusx J_r^\vee}.\]
    Let \(\pi \colon M_r(f,g) \to M_r^\short(f,g)\) denote the map that projects \(A\plusx J_r\) onto \(A\plusx\{\smallbullet\}\).
\end{defn}

\begin{rem}
    The mapping \(r\)-cylinder \(M_r(\id_A, \id_A)\) is the \(r\)-cylinder \(\Cyl_r(A)\). However, \(M_r^\short(\id_A,\id_A)\) should not be thought of as a cylinder object, because it does not satisfy Axiom~(4) in \Cref{def:brown-cat}: the inclusion 
    \[A \sqcup A \cong (A \plusx \{-2\}) \sqcup (A \plusx \{2\}) \rightarrow M_r^\short(\id_A,\id_A)\]
    is not an \(r\)-cofibration.
\end{rem}

To prove that the map \(\pi \colon M_r(f,g) \to M_r^\short(f,g)\) belongs to \(\Ee_r\), we will use the next lemma. Part~\eqref{pt:pi-excision-1} of the lemma allows us to apply \Cref{thm:exactness} (Exactness) and part~\eqref{pt:pi-excision-2} allows us to apply \Cref{thm:general-excision} (General Excision).

\begin{lem}\label{lem:pi-excision}
    Let \(B \xleftarrow{f} A \xrightarrow{g} C\) be a span in \(\NMet\). 
    \begin{enumerate}
        \item \((M_r(f,g), A\plusx J_r)\) and \((M_r^\short(f,g),A)\) are both \(r\)-pairs. \label{pt:pi-excision-1}
        \item For every \(x \in M_r(f,g) \setminus A \plusx J_r\), the map \(\pi \colon M_r(f,g) \to M_r^\short(f,g)\) restricts to a bijective isometry \(M_r(f,g)_{\geq x} \to M_r^\short(f,g)_{\geq \pi(x)}\). \label{pt:pi-excision-2}
    \end{enumerate}
\end{lem}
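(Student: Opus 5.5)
The plan is to make both spaces explicit using the pushout formula of \Cref{lem:NMet_pushout}~\eqref{pt:NMet_pushout_1}, and then verify each claim by inspecting distances. The map \(A \sqcup A \hookrightarrow A \plusx K_r\) used to form \(M_r(f,g)\) is the subspace inclusion onto \(A \plusx \{-2,2\}\). Therefore, as a set,
\[M_r(f,g) = B \sqcup C \sqcup A\times\{-1,0,1\}.\]
Its metric restricts to \(\dis_B\) on \(B\), to \(\dis_C\) on \(C\), and to the \(\ell_1\)-metric of \(A \plusx J_r\) on \(A\times\{-1,0,1\}\); in particular \(\dis((a,0),(a',\pm1)) = \dis_A(a,a')+r\). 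The mixed distances are given by the minimum formula of \Cref{lem:NMet_pushout}. Since \(-1\) and \(1\) are sinks in \(K_r\), and \(-2\) only reaches \(-1\) while \(2\) only reaches \(1\), this formula gives:
\[\dis(b,(a,-1)) = \min_{a''\in A}\{\dis_B(b,f(a''))+\dis_A(a'',a)\}+r,\]
and symmetrically \(\dis(c,(a,1)) = \min_{a''}\{\dis_C(c,g(a''))+\dis_A(a'',a)\}+r\). All other mixed distances are \(+\infty\), namely: \(B\) to \(C\) and back, \(B\) to \(A\times\{0,1\}\), \(C\) to \(A\times\{-1,0\}\), and anything from \(A\times J_r\) into \(B\sqcup C\). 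In exactly the same way, \(M_r^\short(f,g) = B\sqcup C\sqcup A\times\{\smallbullet\}\). Here \(A\times\{\smallbullet\}\) is a copy of \(A\), and
\[\dis(b,(a,\smallbullet)) = \min_{a''}\{\dis_B(b,f(a''))+\dis_A(a'',a)\}+r,\]
with the analogous formula for \(c\in C\). All other mixed distances are again infinite, and \(\pi\) is the identity on \(B\sqcup C\) and sends \((a,t)\mapsto (a,\smallbullet)\).

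For part~\eqref{pt:pi-excision-1}, I then just read off the \(r\)-pair conditions. For outward-closedness: in \(M_r(f,g)\), a point of \(A\times J_r\) has finite distance only to points of \(A \times J_r\), and in \(M_r^\short(f,g)\), \(A\times\{\smallbullet\}\) is a sink. For the bound \(\dis(x,u)\geq r\): every finite distance from a point of \(B\sqcup C\) into \(A\times J_r\), or into \(A\times\{\smallbullet\}\), is given by one of the formulas above, which visibly contain the summand \(+r\).

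For part~\eqref{pt:pi-excision-2}, take \(x\in B\) (the case \(x \in C\) is symmetric). The formulas show that
\[M_r(f,g)_{\geq x} = B_{\geq x}\sqcup\{(a,-1)\mid \dis(x,(a,-1))<\infty\},\]
and likewise \(M_r^\short(f,g)_{\geq x} = B_{\geq x}\sqcup\{(a,\smallbullet)\mid \dis(x,(a,\smallbullet))<\infty\}\). Since the two reachability formulas coincide, \(\pi\) is a bijection between these sets. It remains to check that \(\pi\) preserves all distances, and I would do this pair type by pair type:
\begin{itemize}
\item within \(B\), both distances equal \(\dis_B\), because \(B\) is a subspace of each pushout;
\item between \((a,-1)\) and \((a',-1)\), both distances equal \(\dis_A(a,a')\);
\item from \(b\) to \((a,-1)\), the two distances are given by the identical formula above;
\item from \((a,-1)\) back into \(B\), both distances are \(+\infty\).
\end{itemize}

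The only real obstacle is carrying out the bookkeeping of the pushout metric correctly. In particular, one has to confirm that \(\dis_{M_r}((a,-1),(a',-1))\) is exactly \(\dis_A(a,a')\) and cannot be shortened by a detour through \(B\). This holds because, by the pushout formula (or by outward-closedness of \(A\times J_r\)), no point of \(A\times J_r\) reaches \(B\). The same observation applies to \(A\times\{\smallbullet\}\) in the short model.
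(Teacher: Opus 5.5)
Your proposal is correct and is essentially the paper's proof: you describe both pushouts explicitly via \Cref{lem:NMet_pushout}, then read off by inspection the $r$-pair conditions and the bijective isometry $M_r(f,g)_{\geq x}\to M_r^{\short}(f,g)_{\geq \pi(x)}$. Your unsimplified formula $\min_{a''}\{\dis_B(b,f(a''))+\dis_A(a'',a)\}+r$ agrees with the paper's $\dis_B(b,f(a))+r$ because $f$ is 1-Lipschitz, and your explicit check that no detour through $B\sqcup C$ shortens distances inside $A\times J_r$ adds care that the paper leaves implicit.
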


\begin{proof}
    Using \Cref{lem:NMet_pushout}, we can describe \(M_r(f,g)\) and \(M_r^\short(f,g)\) explicitly. The metric space \(M_r(f,g)\) has underlying set
    \[(B \times \{-2\}) \sqcup (A \times \{-1,0,1\}) \sqcup (C \times \{2\}).\]
    Within each component of this disjoint union, the metric is the \(\ell_1\)-product metric described in \Cref{def:NMet-csmc}. Between the components, we have the distances
    \[\dis((b,-2),(a,-1)) = \dis_B(b,f(a))+r, \quad 
         \dis((c,2),(a,1)) = \dis_C(c,g(a))+r,\] 
    for \(a \in A\), \(b\in B\) and \(c\in C\) and \(\dis((x,i),(y,j)) = + \infty\) otherwise. Meanwhile, \(M_r^\short(f,g)\) has underlying set
    \((B\times\{-2\})\sqcup (A\times\{\smallbullet\}) \sqcup (C\times\{2\});\)
    between the components we have the distances
    \[\dis((b,-2),(a,\smallbullet)) = \dis_B(b,f(a))+r , \quad 
        \dis((c,2),(a,\smallbullet)) = \dis_C(c,g(a))+r ,\] 
    for \(a \in A\), \(b\in B\), \(c\in C\) and \(\dis((x,i),(y,j)) = + \infty\) otherwise.
    
    From these descriptions it is clear that \((M_r(f,g), A\plusx J_r)\) and \((M_r^\short(f,g),A)\) are both \(r\)-pairs and that \(\pi \colon M_r(f,g) \to M_r^\short(f,g)\) restricts to a bijective isometry 
    \[(B\times\{-2\}) \sqcup (A\times\{-1\}) \to  (B\times\{-2\}) \sqcup (A\times\{\smallbullet\}).\]
    Now take \(x \in M_r(f,g) \setminus A \plusx J_r\); that is, \(x \in (B\times\{-2\}) \sqcup (C\times\{2\})\). Without loss of generality we may assume that \(x=(b,-2)\) for some \(b\in B\). Then, by inspection, \(M_r(f,g)_{\geq x}\) is contained in \((B \times\{-2\}) \sqcup (A\times\{-1\})\) and
    \(M_r^\short(f,g)_{\geq \pi(x)}\) is contained in \((B\times\{-2\}) \sqcup (A\times\{\smallbullet\})\).
    It follows that \(\pi\) restricts to a bijective isometry \(M_r(f,g)_{\geq x}\to M_r^\short(f,g)_{\geq \pi(x)}\).
\end{proof}

\begin{prop}\label{prop:shorthocolim}
    Given any span \(B \xleftarrow{f} A \xrightarrow{g} C\) in \(\NMet\), the map 
    \[\pi \colon M_r(f,g) \to M_r^\short(f,g)\] 
    belongs to $\Ee_r$.
\end{prop}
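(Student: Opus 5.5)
We compare the two long exact sequences on page \(E^{r+1}\) given by \Cref{thm:exactness} (Exactness) for the \(r\)-pairs of \Cref{lem:pi-excision}~\eqref{pt:pi-excision-1}, and apply the five lemma. Note that \(\pi\) maps \(A\plusx J_r\) onto \(A \cong A\plusx\{\smallbullet\}\), so it is a map of pairs \((M_r(f,g),A\plusx J_r)\to(M_r^\short(f,g),A)\). Naturality of the relative reachability complex and of the connecting maps in the proof of \Cref{thm:exactness} (the spectral sequence of a short exact sequence of filtered complexes, split by the retraction \(\pi\) used there) then gives a commutative ladder
\[
\cdots \to E^{r+1}_{p,q}(A\plusx J_r) \to E^{r+1}_{p,q}(M_r(f,g)) \to E^{r+1}_{p,q}(M_r(f,g),A\plusx J_r) \to E^{r+1}_{p-r,q+r-1}(A\plusx J_r)\to\cdots
\]
mapping to the corresponding sequence for \((M_r^\short(f,g),A)\). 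Naturality of the connecting homomorphism should be checked, but it follows from the fact that the retractions \(\RC(X)\to\RC(U)\) built in that proof are compatible with \(\pi\). Indeed, \(\pi\) preserves the decomposition by the number \(K\) of points outside the subspace, since it maps complements bijectively onto complements.

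The outer vertical maps are \(E^{r+1}(\pi|)\) for \(\pi|\colon A\plusx J_r\to A\), which is \(\id_A\plusx(J_r\to\pt)\). The map \(J_r\to \pt\) is an \(r\)-homotopy equivalence: including \(0\), we have \(\iota_0\circ\terminal\rightsquigarrow_r\id_{J_r}\), since \(\dis(0,\pm1)=r\). By \Cref{lem:tensor-preserves-cof}, \(\pi|\in\mathcal H_r\), so by \Cref{prop:r-homotopy-invariance} these vertical maps are isomorphisms on \(E^{r+1}\).

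The relative terms are handled by \Cref{thm:general-excision} (General Excision). \(A\plusx J_r\) is outward-closed in \(M_r(f,g)\), and the square with \(A\plusx J_r\to A\) and \(\pi\) is a pushout: by construction \(M_r^\short\) is obtained from \(M_r\) by collapsing \(A\plusx J_r\) along \(J_r\to\pt\), using \(J_r^\vee=\pt\sqcup_{J_r}K_r\) and pasting of pushouts. By \Cref{lem:pi-excision}~\eqref{pt:pi-excision-2}, \(\pi\) restricts to bijective isometries \(M_r(f,g)_{\geq x}\to M_r^\short(f,g)_{\geq\pi(x)}\), so the relative map is an isomorphism on all pages. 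The five lemma then shows that \(E^{r+1}(\pi)\) is an isomorphism, i.e.\ \(E^r(\pi)\) is a quasi-isomorphism, so \(\pi\in\Ee_r\).

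The main obstacle is verifying the pushout identification and the naturality of the exactness sequence with respect to \(\pi\). The naturality is the delicate point; the rest is formal.
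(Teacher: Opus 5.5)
Your proposal is correct and follows the paper's proof essentially step for step: the exactness ladder for the two $r$-pairs of \Cref{lem:pi-excision}, the fact that $p=\id_A\plusx(J_r\to\pt)\in\mathcal H_r$ on the outer terms, General Excision on the right-hand pushout square, and the five lemma. The two points you flag as delicate are routine. The pushout identification holds because $A\plusx-$ is a left adjoint (\Cref{thm:NMet-csmc}) and so preserves the pushout defining $J_r^\vee$. Naturality of the connecting maps follows from functoriality of $E^r$ applied to the map of short exact sequences of filtered complexes, so you do not need compatibility of the retractions with $\pi$.
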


\begin{proof}
    First, consider the diagram
    \begin{equation}\label{diag:short-hocolim}
    \xymatrix@R=2.5ex@C=2.5ex{ & A\plusx J_r\ar[r]^-p\ar[d] & A\ar[d]\\
    A\sqcup A\ar[r]\ar[d]_{f\sqcup g} & A\plusx K_r\ar[r]\ar[d] & A\plusx J_r^\vee\ar[d]\\
    B\sqcup C\ar[r] & M_r(f,g)\ar[r]^-\pi & M_r^\short(f,g)
    }
    \end{equation}
    The upper square here is a pushout because the functor \(A\plusx-\colon \NMet\to\NMet\) is a left adjoint (\Cref{thm:NMet-csmc}) and hence preserves the pushout that defines \(J_r^\vee\). The lower right square must be a pushout diagram because the left square is (by definition of \(M_r(f,g)\)) and the total bottom square is (by definition of \(M_r^\short(f,g)\)). It follows that the right total square is a pushout.
    
    By \Cref{lem:pi-excision}~\eqref{pt:pi-excision-1}, \((M_r(f,g), A\plusx J_r)\) and \((M_r^\short(f,g),A)\) are \(r\)-pairs. Applying \Cref{thm:exactness} (Exactness) to both yields a diagram of long exact sequences
    \begin{equation*}
    \begin{tikzcd}[column sep=small, row sep=3ex]
        \cdots \arrow{r} & E_{p,q}^{r+1}(A \plusx J_r) \arrow{r} \arrow{d}{E^{r+1}(p)} & E_{p,q}^{r+1}(M_r(f,g)) \arrow{r} \arrow{d}{E^{r+1}(\pi)} & E_{p,q}^{r+1}(M_r(f,g),A \plusx J_r) \arrow{r} \arrow{d}{E^{r+1}(\bar{\pi})} & \cdots \\
        \cdots \arrow{r} & E_{p,q}^{r+1}(A) \arrow{r} & E_{p,q}^{r+1}(M_r^\short(f,g)) \arrow{r} & E_{p,q}^{r+1}(M_r^\short(f,g),A) \arrow{r} & \cdots.
    \end{tikzcd}
    \end{equation*}
    The map \(p\) is in \(\mathcal{H}_r\), so \(E^{r+1}(p)\) is an isomorphism. Meanwhile, \Cref{lem:pi-excision}~\eqref{pt:pi-excision-2} tells us that the map \(\pi\) satisfies the conditions of \Cref{thm:general-excision} (General Excision). Applying that theorem to the right total square in \eqref{diag:short-hocolim}, we see that the map
    \[E^{r+1}(\bar{\pi}) \colon E^{r+1}(M_r(f,g),A\plusx J_r)\to E^{r+1}(M_r^\short(f,g),A)\]
    is an isomorphism. Thus, \(E^{r+1}(\pi)\) is an isomorphism, as claimed.
\end{proof}

The main application of \Cref{prop:shorthocolim} is to show that the `short mapping cylinder' can be used to compute homotopy pushouts.

\begin{cor}\label{cor:hocolimdef}
   Given any span \(B \xleftarrow{f} A \xrightarrow{g} C\) in \(\NMet\), the object \(M_r^\short(f,g)\) represents its homotopy pushout in \((\NMet,\Ee_r,\cof_r)\).
   \end{cor}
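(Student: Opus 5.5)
The plan is to combine \Cref{prop:r-mapping-cylinder} with \Cref{prop:shorthocolim}. By \Cref{prop:r-mapping-cylinder}, the mapping \(r\)-cylinder \(M_r(f,g)\) represents the homotopy pushout of the span \(B \xleftarrow{f} A \xrightarrow{g} C\) in \((\NMet,\Ee_r,\cof_r)\). By \Cref{prop:shorthocolim}, the projection \(\pi \colon M_r(f,g) \to M_r^\short(f,g)\) lies in \(\Ee_r\). So \(\pi\) becomes an isomorphism in \(\Ho(\NMet)\), and \(M_r^\short(f,g)\) is isomorphic there to \(M_r(f,g)\), hence to \(\hocolim\) of the span.

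To make this more than an identification of objects, I will check that \(\pi\) is compatible with the structure maps from \(B\) and \(C\). The maps \(B \to M_r(f,g)\) and \(B \to M_r^\short(f,g)\) both come from the bottom row of diagram \eqref{diag:short-hocolim}, and \(\pi\) is induced by the pushout square. Hence \(\pi\) commutes with the inclusions of \(B\times\{-2\}\) and \(C\times\{2\}\). At the level of \(A\), the two composites \(A \to B \to M^\short_r\) and \(A \to C \to M^\short_r\) agree in \(\Ho\) with the ones for \(M_r(f,g)\) composed with \(\pi\), because the cone maps of \(M_r(f,g)\) pass through \(\Cylr(A)\), and \(\Cylr(A)\to A\) is a weak equivalence. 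Therefore the cocone exhibiting \(M_r(f,g)\) as the homotopy colimit is carried by \(\pi\) to a cocone on \(M_r^\short(f,g)\), and \(\pi\) is an isomorphism of cocones in the homotopy category.

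The only point requiring care is this cocone-level compatibility, if one wants more than an object-level identification. I expect it to follow directly from the commutativity of diagram \eqref{diag:short-hocolim}, with no further computation.
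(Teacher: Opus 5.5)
Your argument is correct, and its core is the same as the paper's: compare the known model \(M_r(f,g)\) of \Cref{prop:r-mapping-cylinder} with \(M_r^\short(f,g)\) via the \(r\)-quasi-isomorphism \(\pi\) of \Cref{prop:shorthocolim}. The difference is the level at which the comparison is made.

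The paper treats \(M_r\) and \(M_r^\short\) as functors \(\NMet^{\smallbullet\leftarrow\smallbullet\rightarrow\smallbullet}\to\NMet\), with \(\pi\) a natural transformation. Since \(M_r\) preserves pointwise \(r\)-quasi-isomorphisms, two-out-of-three shows that \(M_r^\short\) does too. Both functors therefore descend to homotopy categories, where \(\pi\) becomes a natural isomorphism, so \(M_r^\short\) itself induces a left adjoint to the constant-diagram functor. This gives a model of \(\hocolim\) that is functorial in the span. The price is that it uses that \(M_r\) is homotopical, a gluing-lemma fact the paper asserts without proof.

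Your objectwise argument needs no such input. It suffices for the statement as phrased, and for its use in \Cref{prop:mayer_vietoris_hocolim}, which only needs an isomorphism on page \(E^{r+1}\); any isomorphism in \(\Ho(\NMet)\) provides that.

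Your second paragraph is a correct computation: both composites out of \(A\) factor through the two sections of the weak equivalence \(\Cylr(A)\to A\). However, it is not what the universal property asks for. The homotopy pushout is a left adjoint defined on \(\Ho(\NMet^{\cn{D}})\), and morphisms there are not the same as homotopy-commutative cocones in \(\Ho(\NMet)\). What you actually need is only that composing the unit of the adjunction with the isomorphism \(\pi\) gives another universal arrow. That is automatic, so this paragraph can be dropped rather than being the ``only point requiring care''.
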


\begin{proof}
    Because \(M_r\) and \(M_r^\short\) have been defined using pushout diagrams, which are functorial, they upgrade to functors
    \[M_r,M_r^\short \colon \NMet^{\smallbullet\leftarrow\smallbullet\rightarrow\smallbullet} \rightrightarrows \NMet,\]
    together with the natural transformation \(\pi \colon M_r\Rightarrow M_r^\short.\) The functor \(M_r\) preserves (pointwise) \(r\)-quasi-isormorphisms and so does  \(M_r^\short\) by \Cref{prop:shorthocolim} and by the two-out-of-three property. As a consequence the induced functors on the homotopy categories are isomorphic, so both are adjoint to the constant functor.
\end{proof}

From \Cref{prop:shorthocolim} we can also derive an example showing that for each \(r \geq 1\) there are strictly more \(r\)-quasi-isomorphisms than \(r\)-homotopy equivalences. The map \(\pi\) in the following statement is illustrated in \Cref{fig:in_E_r_not_H_r}.

\begin{prop}\label{prop:E_r_H_r}
    Fix \(r \geq 1\). Consider the span \(\pt \xleftarrow{\terminal} \pt \xrightarrow{\terminal} \pt\). Let
    \[\pi \colon M_r(\terminal,\terminal) \to M_r^\short(\terminal,\terminal)\]
    be the projection map in \Cref{def:short-r-cylinder}. Then \(\pi\) is in \(\Ee_r\) but not in \(\mathcal{H}_r\).
\end{prop}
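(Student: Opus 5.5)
That \(\pi\) lies in \(\Ee_r\) is the special case \(A=B=C=\pt\) of \Cref{prop:shorthocolim}, so nothing more is needed there. The work is to show \(\pi\notin\mathcal{H}_r\). First I will describe the two spaces explicitly using the descriptions in the proof of \Cref{lem:pi-excision}. The space \(M:=M_r(\terminal,\terminal)\) is just \(K_r\): five points \(-2,-1,0,1,2\), with \(\dis(-2,-1)=\dis(0,-1)=\dis(0,1)=\dis(2,1)=r\) and all other distances between distinct points equal to \(+\infty\). The space \(S:=M_r^\short(\terminal,\terminal)\) is \(J_r^\vee\): three points \(-2,\smallbullet,2\), with \(\dis(\pm2,\smallbullet)=r\) and all other distances \(+\infty\). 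The map \(\pi\) sends \(-1,0,1\mapsto\smallbullet\) and fixes \(\pm2\).

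Suppose \(g\colon S\to M\) is an \(r\)-homotopy inverse. The plan is to get a contradiction from \(g\circ\pi\sim_r\id_M\), using the following key observation. For any \(\N\)-metric space \(Y\), two maps \(h,h'\colon M\to Y\) with \(h\rightsquigarrow_r h'\) satisfy \(\dis(h(x),h'(x))\le r<\infty\) for every \(x\). In \(M\) itself, finite distances from a point \(x\) go only to \(x\) and its out-neighbours. So an elementary \(r\)-homotopy between self-maps of \(M\) moves each image point at most one step along an arrow, in the appropriate direction.

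Next I will enumerate the possible maps \(g\colon S\to M\). The point \(g(\smallbullet)\) must be reachable within distance \(r\) from both \(g(-2)\) and \(g(2)\). The only points of \(M\) with two in-arrows are \(-1\) (from \(-2\) and \(0\)) and \(1\) (from \(0\) and \(2\)); otherwise the two points collapse. So \(g\circ\pi\) identifies \(-1,0,1\) to a single point, and its image is contained in a ``short'' piece of \(M\): either a single in-star \(\{a,b,c\}\) with \(c\in\{-1,1\}\), or something smaller.

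To show such a map is not \(r\)-homotopic to \(\id_M\), I will use an invariant that is preserved along zig-zags of elementary \(r\)-homotopies. The simplest choice is the set of self-maps \(h\) of \(M\) satisfying \(h(-2)\neq h(2)\), \(h(-1)\ne h(1)\), and so on. The concern is that zig-zags might move points step by step. A cleaner route is to use \(E^{r+1}\)-functoriality: by \Cref{prop:r-homotopy-invariance}, if \(g\circ\pi\sim_r\id_M\) then \(E^{r+1}(g\circ\pi)=\id\). However, \(E^{r+1}\) cannot distinguish these maps, since \(\pi\) is an \(r\)-quasi-isomorphism, so this route likely fails.

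I will therefore instead examine \(\pi\circ g\sim_r\id_S\) combined with a direct analysis of which self-maps of \(M\) are \(r\)-homotopic to \(\id_M\). Starting from \(\id_M\), an elementary step \(\id\rightsquigarrow_r h\) or \(h\rightsquigarrow_r\id\) must move each point along at most one arrow. Since \(0\) has out-arrows to both \(-1\) and \(1\), while \(-1\) and \(1\) have no out-arrows, a map \(h\) that is \(r\)-homotopic to \(\id_M\) should still satisfy \(h(-2)\ne h(2)\) or keep some ``shape''. I expect this combinatorial closure argument to be the main obstacle: I need an invariant of the \(\sim_r\)-class of \(\id_M\) in \(\inhom{M}{M}\) that fails for every map \(g\circ\pi\). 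I would first try to show that the component of \(\id_M\) consists only of maps that are injective on \(\{-1,0,1\}\) or fix \(\pm2\) separately, verifying this by checking all one-step neighbours. Since \(M\) is finite, the check is finite.
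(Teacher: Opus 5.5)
Your argument that \(\pi\in\Ee_r\) via \Cref{prop:shorthocolim} is fine. The second half cannot be completed, because for the spaces you wrote down \(\pi\) \emph{is} an \(r\)-homotopy equivalence. Your identifications \(M_r(\terminal,\terminal)=K_r\) and \(M_r^\short(\terminal,\terminal)=J_r^\vee\) are correct for the span exactly as printed.

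Let \(g\colon J_r^\vee\to K_r\) be constant at \(0\). Then \(\pi\circ g\) is constant at \(\smallbullet\), and \(\id_{J_r^\vee}\rightsquigarrow_r\pi\circ g\) because \(\dis(y,\smallbullet)\leq r\) for every \(y\). Also \(g\circ\pi\) is constant at \(0\). The proof of \Cref{thm:NMet-Brown-cat} already connects this map to \(\id_{K_r}\) by \(g\circ\pi\rightsquigarrow_r h\) and \(\id_{K_r}\rightsquigarrow_r h\), where \(h(-2)=-1\), \(h(2)=1\) and \(h\) fixes \(-1,0,1\).

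So the \(\sim_r\)-class of \(\id_{K_r}\) contains a constant map. No invariant of the kind you were hoping for (injectivity on \(\{-1,0,1\}\), or \(h(-2)\neq h(2)\)) can exist, and the `main obstacle' you flagged is an impossibility.

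The statement contains a misprint. \Cref{fig:in_E_r_not_H_r} and the paper's proof use points \(a,b,c,d\) in the target and \(b_1,b_2,b_3,c_1,c_2,c_3\) in the source. This shows the intended span is \(\pt\xleftarrow{\terminal}\pt\sqcup\pt\xrightarrow{\terminal}\pt\). Then \(Y=M_r^\short(\terminal,\terminal)\) is the four-point sphere \(\mathbb{S}^1_r\), with arrows \(a\to b\), \(a\to c\), \(d\to b\), \(d\to c\) of length \(r\). The source \(X=M_r(\terminal,\terminal)\) has eight points, with \(a\to b_1\leftarrow b_2\to b_3\leftarrow d\) and similarly for the \(c_j\).

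For that statement, the idea missing from your plan is to use the other composite, on the small space, where the identity is rigid. If \(f\colon Y\to Y\) satisfies \(f\rightsquigarrow_r\id_Y\) or \(\id_Y\rightsquigarrow_r f\), then \(f=\id_Y\). The reason is that nothing reaches \(a\) or \(d\), and \(b,c\) reach nothing. So one of the pairs \(\{a,d\}\), \(\{b,c\}\) is fixed outright. The \(1\)-Lipschitz condition relative to that fixed pair, together with the bound \(r\) on the homotopy, then forces the other pair to be fixed too.

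Hence any zig-zag ending at \(\id_Y\) is constant, and \(\pi\circ i\sim_r\id_Y\) forces \(\pi\circ i=\id_Y\). But \(\pi\) has no section in \(\NMet\). The point \(i(b)\) would have to lie in \(\{b_1,b_2,b_3\}\) and be within distance \(r\) of both \(i(a)=a\) and \(i(d)=d\). Only \(b_1\) is reachable from \(a\), and only \(b_3\) from \(d\).

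This argument never uses \(i\circ\pi\sim_r\id_X\), so the classification of the \(\sim_r\)-class of \(\id_X\) you were planning is unnecessary. It is precisely the rigidity of \(\id_Y\) that fails for \(J_r^\vee\), where \(\dis(y,\smallbullet)\leq r\) for every \(y\).
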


\begin{proof}
    We have already seen, in \Cref{prop:shorthocolim}, that the map \(\pi\) belongs to \(\Ee_r\). The claim is that \(\pi\) is not in \(\mathcal{H}_r\); we will prove it by contradiction.
    
    Set \(X=M_r(\terminal,\terminal)\) and \(Y=M_r^\short(\terminal,\terminal)\). We first prove  that if \(f \colon Y\to Y\) satisfies \(f\rightsquigarrow_r \id_Y\) or \(f\leftsquigarrow_r \id_Y\) then \(f=\id_Y\). If \(f\rightsquigarrow_r \id_Y\), then we have for all \(y\in Y\) that \(\dis_Y(f(y),y)\leq r\). This implies \(f(a)=a\) and \(f(d)=d\) and \(f(b)\in\{a,b\}\). Since \(f\) is \(1\)-Lipschitz we have \(\dis_Y(f(d),f(b))\leq \dis_Y(d,b)=r\), so \(f(b)\in\{b,c,d\}\). Thus \(f(b)=b\) and similarly \(f(c)=c\). The second case is proved in the same way.
    
    Now suppose there exists \(i \colon Y\to X\) such that \(\pi\circ i\) and \(i\circ\pi \) are \(r\)-homotopy equivalent to the identity. Then, by the previous paragraph, we have \(\pi\circ i=\id_Y\). This implies that \(i(a)=a\) and \(i(d)=d\) and \(i(b)\in\{b_1,b_2,b_3\}\). But it is clear that there is no such map in \(\NMet\).
\end{proof}

\begin{figure}
\centering
\begin{tabular}{p{5cm}p{0.5cm}p{3cm}}
\begin{tikzpicture}[scale=0.8,baseline=16]
      \begin{scope}
        \draw[dashed] (-0.7,-0.4) -- (-0.7,2.6) -- (5.6,2.6) -- (5.6,-0.4) -- (-0.7,-0.4);
            \node (a) at (0,1) {$a$};
              \node (b) at (1,2) {$\color{red}{b_1}$};
                \node (c) at (2.5,2) {$\color{red}b_2$};
                      \node (d) at (4,2) {$\color{red}b_3$};
                 \node (f) at (5,1) {$d$};
                    \node (g) at (1,0) {$\color{red}{c_1}$};
                       \node (h) at (2.5,0) {$\color{red}{c_2}$};
                          \node (i) at (4,0) {$\color{red}c_3$};
                             \draw[stealth-] (b)--(a);
                       \draw[color=red, stealth-] (b)--(c);
                        \draw[color=red, stealth-] (d)--(c);
                          \draw[stealth-] (d)--(f);
                            \draw[stealth-] (g)--(a);
                       \draw[color=red, stealth-] (g)--(h);
                        \draw[color=red, stealth-] (i)--(h);
                          \draw[stealth-] (i)--(f);
                   \end{scope}
         \end{tikzpicture}
         & $\overset{\pi}{\longrightarrow}$ &
    \begin{tikzpicture}[scale=0.8,baseline=16]
      \begin{scope}
        \draw[dashed] (-0.7,-0.4) -- (-0.7,2.6) -- (2.6,2.6) -- (2.6,-0.4) -- (-0.7,-0.4);
            \node (a) at (0,1) {$a$};
              \node (b) at (1,2) {$\color{red}{b}$};
                  \node (f) at (2,1) {$d$};
                    \node (g) at (1,0) {$\color{red}{c}$};
                        \draw[stealth-] (b)--(a);
                           \draw[stealth-] (b)--(f);
                            \draw[stealth-] (g)--(a);
                       \draw[stealth-] (g)--(f);
                                          \end{scope}
    \end{tikzpicture} 
    \end{tabular}
    \caption{The map \(\pi\) is in \(\Ee_r\) but not \(\mathcal{H}_r\); see \Cref{prop:E_r_H_r}. Arrows represent distances of \(r\). Where there is no arrow between distinct points, the distance is \(+\infty\). }
\label{fig:in_E_r_not_H_r}
\end{figure}

Combining \Cref{cor:hocolimdef} with the Mayer--Vietoris sequence in \Cref{thm:mayer-vietoris} yields a long exact sequence on every page of the MPPS, associated to any span in \(\NMet\). This will be useful in the next section, to analyse suspensions and spheres.

\begin{prop}
\label{prop:mayer_vietoris_hocolim}
    Given any span \(X \xleftarrow{f} A \xrightarrow{g} Y\) in \(\NMet\) we have, for each \(r \geq 1\), a long exact sequence on page \(E^{r+1}\) of the magnitude-path spectral sequence: 
    \[\cdots \to E^{r+1}_{p,q}(A) \rightarrow E^{r+1}_{p,q}(X) \oplus E^{r+1}_{p,q}(Y) \rightarrow E^{r+1}_{p,q}(M_r^\short(f,g)) \rightarrow E^{r+1}_{p-r,q+r-1}(A) \to \cdots.\]
\end{prop}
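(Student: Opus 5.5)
The plan is to realize \(M_r^\short(f,g)\) up to \(r\)-quasi-isomorphism as an honest pushout along an \(r\)-cofibration, apply \Cref{thm:mayer-vietoris} to that pushout, and then transport the resulting long exact sequence along isomorphisms on page \(E^{r+1}\).

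\textbf{Setting up the pushout.} In the proof of \Cref{prop:r-mapping-cylinder} the mapping \(r\)-cylinder \(M_r(f,g)\) appears as the pushout of the span \(Y \xleftarrow{g} A \xrightarrow{u\circ i_2} \Cylr(f)\). Here \(\Cylr(f)\) is the pushout of \(X\sqcup A \xleftarrow{f\sqcup \id_A} A\sqcup A \rightarrow \Cylr(A)\). By \Cref{prop:cylr-inclusions} the map \(A\sqcup A\to\Cylr(A)\) lies in \(\cof_r\), so by \Cref{prop:cof_pushout} its pushout \(u\colon X\sqcup A\to \Cylr(f)\) is also an \(r\)-cofibration. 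The coproduct inclusion \(i_2\colon A\to X\sqcup A\) is a \(0\)-cofibration. I will check directly that \(i_2\) is an \(r\)-cofibration: \(A\) is inward-closed; take \(U=A\); then \((X\sqcup A, A)\) is an \(r\)-pair because all cross distances are infinite. By \Cref{prop:cof_composition}, \(u\circ i_2\in\cof_r\).

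\textbf{Applying Mayer--Vietoris.} \Cref{thm:mayer-vietoris} applied to the \(r\)-cofibration \(u\circ i_2\) and the map \(g\) gives a long exact sequence
\[\cdots\to E^{r+1}_{p,q}(A)\to E^{r+1}_{p,q}(\Cylr(f))\oplus E^{r+1}_{p,q}(Y)\to E^{r+1}_{p,q}(M_r(f,g))\to E^{r+1}_{p-r,q+r-1}(A)\to\cdots.\]

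\textbf{Replacing the terms.} Two identifications remain.
\begin{enumerate}
\item The map \(\Cylr(f)\to X\) is an \(r\)-quasi-isomorphism, so \(E^{r+1}(\Cylr(f))\cong E^{r+1}(X)\). This map is the weak equivalence in Brown's factorization of \(f\) (\Cref{rem:factorization-lemma}), obtained from \(\Cylr(A)\to A\), which lies in \(\mathcal H_r\) by the proof of \Cref{thm:NMet-Brown-cat}. To make this concrete, I would argue that \(X\to\Cylr(f)\) is the pushout of the acyclic \(r\)-cofibration \(A\to\Cylr(A)\) (inclusion at \(-2\)), hence in \(\Ee_r\) by \Cref{prop:cof_pushout}. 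Since \(\Cylr(f)\to X\) is left inverse to it, two-out-of-three puts \(\Cylr(f)\to X\) in \(\Ee_r\) as well.
\item \(E^{r+1}(\pi)\colon E^{r+1}(M_r(f,g))\to E^{r+1}(M_r^\short(f,g))\) is an isomorphism by \Cref{prop:shorthocolim}.
\end{enumerate}
Substituting both isomorphisms into the sequence preserves exactness and yields the stated sequence.

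\textbf{Expected obstacle.} The main work is the bookkeeping in step (1). One must verify that \(A\to\Cylr(A)\) at the endpoint \(-2\) is an acyclic \(r\)-cofibration. I would show it is an \(r\)-cofibration via \Cref{lem:tensor-preserves-cof} applied to \(\{-2\}\hookrightarrow K_r\), which is inward-closed and factors through \(U=\{-2,-1\}\), a strong \(r\)-deformation retract onto \(\{-2\}\) with \((K_r,U)\) an \(r\)-pair. Acyclicity follows because both \(A\) and \(\Cylr(A)\) are \(r\)-homotopy equivalent to \(A\). One must also check that the map \(X\to\Cylr(f)\) is compatible with the maps appearing in the sequence, so that the final maps are the natural ones.
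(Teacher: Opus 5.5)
Your proof is correct and follows the same strategy as the paper. Both arguments replace one leg of the span by an \(r\)-cofibration followed by an \(r\)-quasi-isomorphism, apply \Cref{thm:mayer-vietoris} to the resulting pushout, and then substitute \(r\)-quasi-isomorphic terms. The difference is that the paper works abstractly: it factors \(g\) via Brown's lemma and uses \Cref{cor:hocolimdef} to identify an arbitrary model of the homotopy pushout with \(M_r^\short(f,g)\). You instead use the explicit mapping-cylinder factorization of \(f\) and the concrete map \(\pi\) of \Cref{prop:shorthocolim}, which has the small advantage of making the maps in the sequence explicit.

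One step should be phrased more carefully. Acyclicity of \(A\to\Cylr(A)\) does not follow from both spaces being equivalent to \(A\). It follows from two-out-of-three applied to \((\id_A\plusx\terminal)\circ(\id_A,-2)=\id_A\), since \(\id_A\plusx\terminal\) lies in \(\mathcal{H}_r\).
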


\begin{proof}
    Let \(W\) denote (any model for) the homotopy pushout in \((\NMet,\Ee_r,\cof_r)\) of the span \((f,g):X \leftarrow A \rightarrow Y\). Factorize \(g\) as \(g=qg'\) where \(g'\colon A\to Y'\) is in \(\cof_r\) and \(q\colon Y'\to Y\) is in \(\Ee_r\). As noted earlier, the homotopy pushout of the span \((f,g)\) can be computed as the pushout of the span  \((f,g'):X \leftarrow A \rightarrow Y'\). Applying the Mayer--Vietoris theorem to this span yields the long exact sequence
        \[\cdots \to E^{r+1}_{p,q}(A) \rightarrow E^{r+1}_{p,q}(X) \oplus E^{r+1}_{p,q}(Y') \rightarrow E^{r+1}_{p,q}(W) \rightarrow E^{r+1}_{p-r,q+r-1}(A) \to \cdots.\]
    The result follows, replacing \(Y'\) by the \(r\)-quasi-isomorphic space \(Y\) and \(W\) by the model of the homotopy pushout \(M_r^\short(f,g)\) given by~\Cref{cor:hocolimdef}.
\end{proof}

%-------------------------------------------------------------

\subsection{Suspensions and spheres}
\label{subsec:spheres}

In this section we define a {suspension} functor on \(\NMet\) and use it to construct an {\(r\)-sphere of dimension \(n\)} for each \(r \geq 1\) and \(n \in \N\). We apply the results of Sections \ref{subsec:homotopy-pushouts} and \ref{subsec:short-mapping-space} to analyse the MPSS of the spheres.

\begin{defn}\label{def:suspension-and-spheres}
    Given a non-empty \(\N\)-metric space \(X\), let \(\terminal_X \colon X\to\pt\) denote the terminal map. For each \(r \geq 1\), define the \demph{\(r\)-suspension} of \(X\) to be 
    \[\Sigma_rX \coloneqq M_r^\short(\terminal_X, \terminal_X).\]
    Since \(M_r^\short\) is defined by a pushout, this extends to a functor \(\Sigma_r \colon \NMet \to \NMet\). Let \(\mathbb S^0 \coloneqq \pt \sqcup \pt\). For \(r \geq 1\) and \(n \in \N\), define the \emph{\(r\)-sphere of dimension \(n\)} in \(\NMet\) to be
    \[\mathbb{S}_r^{n} \coloneqq \Sigma_r^n (\mathbb{S}^0).\]
    The spheres \(\mathbb{S}_r^{1}\) and \(\mathbb{S}_r^{2}\) are illustrated in \Cref{fig:r-spheres}. 
\end{defn}

In order to analyse the magnitude-path spectral sequence of the \(r\)-sphere of dimension \(n\), we first describe how the MPSS behaves with respect to \(r\)-suspension.

\begin{prop}\label{prop:suspension}
    Let \(X\) be a non-empty \(\N\)-metric space. For \(r \geq 1\) we have
    \[
    E_{p,q}^{r+1}(\Sigma_r X) \cong 
    \begin{cases}
        \kk &  (p,q) = (0,0) \\
        E_{p-r,q+r-1}^{r+1}(X) &  (p,q) \not\in \{(0,0), (r,1-r)\}
    \end{cases}
    \]
    while
    \[
    E_{r,1-r}^{r+1}(\Sigma_r X) \oplus \kk \cong E_{0,0}^{r+1}(X) \cong \kk\cdot(X\mod\sim_r).
    \]
\end{prop}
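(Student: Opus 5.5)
The plan is to apply \Cref{prop:mayer_vietoris_hocolim} to the span \(\pt \xleftarrow{\terminal_X} X \xrightarrow{\terminal_X} \pt\), whose short mapping cylinder is by definition \(\Sigma_r X\). This yields a long exact sequence
\[\cdots \to E^{r+1}_{p,q}(X) \to E^{r+1}_{p,q}(\pt)\oplus E^{r+1}_{p,q}(\pt) \to E^{r+1}_{p,q}(\Sigma_r X) \xrightarrow{\partial} E^{r+1}_{p-r,q+r-1}(X) \to E^{r+1}_{p-r,q+r-1}(\pt)^{\oplus 2}\to\cdots.\]
By the Dimension Property (\Cref{prop:dimension}), the terms \(E^{r+1}(\pt)\) vanish outside bidegree \((0,0)\), where they equal \(\kk\). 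So for \((p,q)\) with neither \((p,q)\) nor \((p-r,q+r-1)\) equal to \((0,0)\), the relevant flanking terms are zero and \(\partial\) is an isomorphism \(E^{r+1}_{p,q}(\Sigma_rX)\cong E^{r+1}_{p-r,q+r-1}(X)\). The bidegree preceding \((p,q)\) in the sequence is \((p+r,q-r+1)\); this equals \((0,0)\) only if \(p=-r\), where everything vanishes because the spectral sequence is concentrated in \(p\ge 0\). Hence the generic case holds.

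The two exceptional bidegrees need the maps made explicit. In bidegree \((0,0)\), the relevant segment is
\[E^{r+1}_{0,0}(X)\xrightarrow{\alpha}\kk\oplus\kk \to E^{r+1}_{0,0}(\Sigma_rX)\to E^{r+1}_{-r,r-1}(X)=0.\]
By \Cref{prop:E^r_00}, the source of \(\alpha\) is \(\kk\cdot(X\mod\sim_r)\), and \(\alpha\) sends each component to \((1,-1)\). Since \(X\neq\emptyset\), the image of \(\alpha\) is the antidiagonal, so \(E^{r+1}_{0,0}(\Sigma_rX)\cong\kk\). Alternatively, one can check directly that \(\Sigma_rX\) is \(r\)-connected and apply \Cref{prop:E^r_00}. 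For bidegree \((r,1-r)\), the segment is
\[0=E^{r+1}_{r,1-r}(\pt)^{\oplus2}\to E^{r+1}_{r,1-r}(\Sigma_rX)\xrightarrow{\partial} E^{r+1}_{0,0}(X)\xrightarrow{\alpha}\kk\oplus\kk.\]
Here I need \(r\ge1\) so that \((r,1-r)\neq(0,0)\). This gives \(E^{r+1}_{r,1-r}(\Sigma_rX)\cong\ker\alpha\).

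The remaining point, and the main step, is that \(\ker\alpha\oplus\kk\cong E^{r+1}_{0,0}(X)\). Composing \(\alpha\) with the first projection gives the augmentation \(\kk\cdot(X\mod\sim_r)\to\kk\), which sends every component to \(1\). It is surjective because \(X\) is nonempty, so it splits, and its kernel equals \(\ker\alpha\) because the second coordinate is minus the first. This gives \(E^{r+1}_{r,1-r}(\Sigma_rX)\oplus\kk\cong E^{r+1}_{0,0}(X)\cong\kk\cdot(X\mod\sim_r)\).

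The main obstacle is justifying the identification of the maps in the Mayer--Vietoris sequence. The first map is \((i_*,-f_*)\), which here is \((\terminal_*,-\terminal_*)\), and on \(E^{r+1}_{0,0}\) the induced map of \(\terminal_X\) sends each generator to \(1\). I must also verify that replacing \(Y'\) and \(W\) by quasi-isomorphic models in \Cref{prop:mayer_vietoris_hocolim} preserves these maps up to isomorphism, which holds by naturality of the replacement.
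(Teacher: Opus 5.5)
Your proposal is correct and follows essentially the same route as the paper. You apply \Cref{prop:mayer_vietoris_hocolim} to the span of terminal maps and use the Dimension Property to get the generic isomorphism. You then identify \(E^{r+1}_{r,1-r}(\Sigma_r X)\) with the kernel of the map \(E^{r+1}_{0,0}(X)\to\kk\oplus\kk\), whose induced map onto \(\kk\) splits. The only cosmetic difference is that you obtain \(E^{r+1}_{0,0}(\Sigma_r X)\cong\kk\) as a cokernel in the long exact sequence, whereas the paper gets it from \(r\)-connectedness and \Cref{prop:E^r_00}; you also mention that alternative yourself.
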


\begin{proof}
    Since \(\Sigma_rX\) is \(r\)-connected we have \(E^{r+1}_{0,0}(\Sigma_r X)\cong \kk\) by \Cref{prop:E^r_00}. \Cref{prop:mayer_vietoris_hocolim} gives us a long exact sequence on page \(r+1\) of the MPSS:
    \[
    \begin{tikzcd}[column sep=2ex]
    \cdots \: {}
        \arrow{r}
    & E^{r+1}_{p,q}(\pt) \oplus E^{r+1}_{p,q}(\pt)
        \arrow{r}
        \ar[draw=none]{d}[name=X, anchor=center]{}
    & E^{r+1}_{p,q}(\Sigma_rX)
        \ar[rounded corners,
            to path={ -- ([xshift=2ex]\tikztostart.east)
                      |- (X.center) \tikztonodes
                      -| ([xshift=-2ex]\tikztotarget.west)
                      -- (\tikztotarget)}]{dll}[at end]{} \\      [-10]
    E^{r+1}_{p-r,q+r-1}(X)
        \arrow{r}
    & E^{r+1}_{p-r,q+r-1}(\pt) \oplus E^{r+1}_{p-r,q+r-1}(\pt)
        \arrow{r}
    & {} \: \cdots.
    \end{tikzcd}
    \]
    By ~\Cref{prop:dimension} we know that \(E_{0,0}^{r+1}(\pt) = \kk\), while \(E_{p,q}^{r+1}(\pt) = 0\) for all \((p,q) \neq (0,0)\). It follows that, if \((p,q) \not\in \{(0,0), (r,1-r)\}\) then \(E_{p,q}^{r+1}(\Sigma_r X) \cong E_{p-r,q+r-1}^{r+1}(X)\). If \((p,q)=(r,1-r)\) then we have an exact sequence
    \[ 0 \to E_{r,1-r}^{r+1}(\Sigma_r X) \to E^{r+1}_{0,0}(X) \xrightarrow{\beta} E^{r+1}_{0,0}(\pt) \oplus E^{r+1}_{0,0}(\pt)\]
    so that \(E_{r,1-r}^{r+1}(\Sigma_r X)\) is isomorphic to the kernel of the map \(\beta\).
    The map \(\beta\) is the map \((E^{r+1}(\terminal_X), E^{r+1}(\terminal_X))\) induced by the two terminal maps that define the suspension. Its image is the span of \((1_\kk,1_\kk)\), and the induced short exact sequence
    \[ 0 \to E_{r,1-r}^{r+1}(\Sigma_r X) \to E^{r+1}_{0,0}(X) \to\kk\to 0\]
    splits. Thus \(E_{0,0}^{r+1}(X)\cong  E_{r,1-r}^{r+1}(\Sigma_r X) \oplus \kk\).
\end{proof}

Finally, we apply \Cref{prop:suspension} to describe page \(E^{r+1}\) for the \(r\)-sphere of dimension \(n\), for each \(n \in \N\). We find that \(E^{r+1}(\mathbb{S}_r^n)\) is concentrated on the line of slope \((1-r)/r\) through bigrading \((0,0)\), where it consists of a copy of the ordinary homology of the topological \(n\)-sphere. \Cref{fig:r-spheres} illustrates this for \(r,n \in \{1,2\}\).

\begin{thm}\label{thm:spheres}
    Fix \(r \geq 1\). We have
    \[
    E_{p,q}^{r+1}(\mathbb S_r^0) =
        \begin{cases}
        \kk\oplus \kk & \text {if } (p,q)=(0,0)\\
        0 & \text{otherwise},
        \end{cases}
    \]
    while for \(n \geq 1\) we have
    \[
    E_{p,q}^{r+1}(\mathbb S_r^n) =
        \begin{cases}
        \kk & \text{if } (p,q)=(0,0) \\
        \kk & \text{if } (p,q)=(nr,n(1-r))\\
        0 & \text{otherwise}.
        \end{cases}
    \]
\end{thm}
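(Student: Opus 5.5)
The plan is to argue by induction on \(n\), using \Cref{prop:suspension} to pass from \(\mathbb{S}_r^n\) to \(\mathbb{S}_r^{n+1} = \Sigma_r \mathbb{S}_r^n\).

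\emph{Base case \(n=0\).} Here \(\mathbb{S}^0 = \pt \sqcup \pt\). By \Cref{prop:additivity} we have \(E^{r+1}(\mathbb S^0_r) \cong E^{r+1}(\pt)\oplus E^{r+1}(\pt)\). By \Cref{prop:dimension}, this is \(\kk\oplus\kk\) in bidegree \((0,0)\) and zero elsewhere.

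\emph{First suspension, \(n=1\).} Apply \Cref{prop:suspension} with \(X=\mathbb S^0\), which is non-empty.
\begin{itemize}
\item In bidegree \((0,0)\) we get \(\kk\).
\item The two points of \(\mathbb S^0\) are at infinite distance, so they form two distinct \(r\)-connected components. Hence \(E^{r+1}_{0,0}(\mathbb S^0)\cong \kk^2\), and \(E^{r+1}_{r,1-r}(\mathbb S^1_r)\oplus\kk\cong\kk^2\).
\item For all other \((p,q)\), the group is \(E^{r+1}_{p-r,q+r-1}(\mathbb S^0)\). This is nonzero only if \((p-r,q+r-1)=(0,0)\), i.e.\ \((p,q)=(r,1-r)\), which is the excluded bidegree. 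So it vanishes.
\end{itemize}

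\emph{Inductive step, \(n\ge1\).} Assume the statement for \(\mathbb S^n_r\) and apply \Cref{prop:suspension} to \(X=\mathbb S^n_r\), which is non-empty and \(r\)-connected. By \Cref{prop:E^r_00} (or the inductive hypothesis) \(E^{r+1}_{0,0}(X)\cong\kk\), so \(E^{r+1}_{r,1-r}(\Sigma_rX)\oplus\kk\cong\kk\). For \((p,q)\notin\{(0,0),(r,1-r)\}\), the shift \((p,q)\mapsto(p-r,q+r-1)\) carries \(((n+1)r,(n+1)(1-r))\) to \((nr,n(1-r))\). So the only nonzero group off \((0,0)\) is a \(\kk\) in bidegree \(((n+1)r,(n+1)(1-r))\). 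The source bidegree \((0,0)\) of \(X\) would land at \((r,1-r)\), but that bidegree is excluded and handled separately. Note also that \(((n+1)r,(n+1)(1-r))\ne(r,1-r)\) since \(n\ge1\).

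\emph{Main obstacle.} The delicate point is the bidegree \((r,1-r)\). From \(M\oplus\kk\cong\kk\) we want to conclude \(M=0\), but over a general commutative ring this cancellation is not automatic. To avoid relying on it, I would go back to the exact sequence in the proof of \Cref{prop:suspension}. It identifies \(E^{r+1}_{r,1-r}(\Sigma_rX)\) with the kernel of \(\beta\colon E^{r+1}_{0,0}(X)\to\kk\oplus\kk\).
\begin{itemize}
\item For \(X=\mathbb S^n_r\) with \(n\ge1\), we have \(E^{r+1}_{0,0}(X)\cong\kk\) generated by the class of a point, and \(\beta(1)=(1,1)\). So \(\beta\) is injective and the kernel is \(0\).
\item For \(n=0\), \(\beta\colon\kk^2\to\kk^2\) sends the two points to \((1,1)\) each. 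Its kernel is the free module spanned by the difference of the two points, hence \(\kk\).
\end{itemize}
This gives the stated bidegree \((r,1-r)\) for \(n=1\) and completes the induction.
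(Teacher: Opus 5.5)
Your proof is correct and follows the paper's own argument: induction on \(n\), with the base case from \Cref{prop:additivity} and \Cref{prop:dimension}, and the inductive step from \Cref{prop:suspension}, where the paper likewise treats \(n=1\) separately at bidegree \((r,1-r)\). The only difference is that at \((r,1-r)\) you compute \(\ker\beta\) directly rather than cancel a copy of \(\kk\) as the paper does. That cancellation is in fact valid over any commutative ring: \(M\oplus\kk\cong\kk\) forces \(M=0\) by reducing modulo maximal ideals, and \(M\oplus\kk\cong\kk^2\) forces \(M\cong\kk\) by taking second exterior powers. So your detour is a useful clarification, not a needed repair.
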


\begin{proof}
    We prove this by induction on \(n\). For the base case, observe that for every \(r \geq 1\) we have \(\mathbb{S}_r^0 = \Sigma_r^0(\mathbb{S}^0) = \pt \sqcup \pt\). Thus, by \Cref{prop:additivity},
    \[E_{p,q}^{r+1}(\mathbb{S}_r^0) \cong E_{p,q}^{r+1}(\pt) \oplus E_{p,q}^{r+1}(\pt), \]
    giving the first claim. Now take \(n \geq 1\) and suppose the statement holds for \(n-1\). Since \(\mathbb{S}_r^n = \Sigma_r(\mathbb{S}_r^{n-1})\) for \(n \geq 1\), \Cref{prop:suspension} tells us that \(E_{0,0}^{r+1}(\mathbb{S}_r^n) \cong \kk\) and that
    \[
    E_{r,1-r}^{r+1}(\mathbb{S}_r^n) \oplus \kk \cong E_{0,0}^{r+1}(\mathbb{S}_r^{n-1}) \cong 
    \begin{cases} 
    \kk \oplus \kk & n = 1 \\
    \kk & n > 1,
    \end{cases}
    \]
    so \(E_{r,1-r}^{r+1}(\mathbb{S}_r^1) \cong \kk\) and \(E_{r,1-r}^{r+1}(\mathbb{S}_r^n) = 0\) for all \(n > 1\). For \((p,q) \not\in \{(0,0), (r,1-r)\}\), \Cref{prop:suspension} combined with the induction hypothesis gives
    \[
    E_{p,q}^{r+1}(\mathbb{S}_r^n) \cong E_{p-r,q+r-1}^{r+1}(\mathbb{S}_r^{n-1}) \cong 
    \begin{cases}
        \kk & \text{if }(p-r,q+r-1) = ((n-1)r, (n-1)(1-r)) \\
        0 & \text{otherwise,}
    \end{cases}
    \]
    so that \(E_{p,q}^{r+1}(\mathbb{S}_r^n) \cong \kk\) if \((p,q) = (nr,n(1-r))\), and otherwise \(E_{p,q}^{r+1}(\mathbb{S}_r^n) = 0\).
\end{proof}

\begin{figure}
    \centering
    \begin{tikzpicture}[scale=.65,baseline=0]
        \draw[dashed] (-2.3,2.3) -- (2.3,2.3) -- (2.3,-2.3) -- (-2.3,-2.3) -- (-2.3,2.3);
        
        \node (a) at (0:1.8) {$\smallbullet$};
        \node (c) at (180:1.8) {$\smallbullet$};
        \node (e) at (90:1.8) {$\smallbullet$};
        \node (f) at (270:1.8) {$\smallbullet$};

        \draw[stealth-] (a) -- (e);
        \draw[stealth-] (c) -- (e);

        \draw[stealth-] (a) -- (f);
        \draw[stealth-] (c) -- (f);

        \node at (135:1.6) {\tiny{$r$}};
    \end{tikzpicture}
    \qquad
    \begin{tikzpicture}[baseline={(0,-.8)},xscale=.65,yscale=.65]
        \draw[-stealth, ultra thick,white!90!black] (-1,0) to (4.5,0);
        \draw[-stealth, ultra thick,white!90!black] (0,1) to (0,-3);
        \node (a) at (0,0) {$\kk$};
        \node (c) at (1,0) {$\kk$};
        \node() at (0,0.5) {$\scriptstyle 0$};
        \node() at (1,0.5) {$\scriptstyle 1$};
        \node() at (2,0.5) {$\scriptstyle 2$};
        \node() at (3,0.5) {$\scriptstyle 3$};
        \node() at (4,0.5) {$\scriptstyle 4$};
        \node() at (-0.5,0) {$\scriptstyle 0$};
        \node() at (-0.5,-1) {$\scriptstyle -1$};
        \node() at (-0.5,-2) {$\scriptstyle -2$};

        \node[draw, below left]() at (2.2,-1.8) {\footnotesize{$E^2(\mathbb{S}_1^1)$}};
    \end{tikzpicture}
    \qquad
    \begin{tikzpicture}[baseline={(0,-.8)},xscale=.65,yscale=.65]
        \draw[-stealth, ultra thick,white!90!black] (-1,0) to (4.5,0);
        \draw[-stealth, ultra thick,white!90!black] (0,1) to (0,-3);
        \node (a) at (0,0) {$\kk$};
        \node (c) at (2,-1) {$\kk$};
        \node() at (0,0.5) {$\scriptstyle 0$};
        \node() at (1,0.5) {$\scriptstyle 1$};
        \node() at (2,0.5) {$\scriptstyle 2$};
        \node() at (3,0.5) {$\scriptstyle 3$};
        \node() at (4,0.5) {$\scriptstyle 4$};
        \node() at (-0.5,0) {$\scriptstyle 0$};
        \node() at (-0.5,-1) {$\scriptstyle -1$};
        \node() at (-0.5,-2) {$\scriptstyle -2$};
        
        \node[draw, below left]() at (2.2,-1.8) {\footnotesize{$E^3(\mathbb{S}_2^1)$}};
    \end{tikzpicture}

    \vspace{2mm}
    
    \begin{tikzpicture}[scale=.65,baseline=0]
        \draw[dashed] (-2.2,2.2) -- (2.2,2.2) -- (2.2,-2.2) -- (-2.2,-2.2) -- (-2.2,2.2);
        
        \node (a) at (0:1.8) {$\smallbullet$};
        \node [gray] (b) at (.4,.6) {$\smallbullet$};
        \node (c) at (180:1.8) {$\smallbullet$};
        \node (d) at (-.4,-.6) {$\smallbullet$};
        \node (e) at (90:1.8) {$\smallbullet$};
        \node (f) at (270:1.8) {$\smallbullet$};

        \draw[gray, stealth-] (a) -- (b);
        \draw[gray, stealth-] (c) -- (b);
        \draw[stealth-] (a) -- (d);
        \draw[stealth-] (c) -- (d);
        
        \draw[stealth-] (a) -- (e);
        \draw[gray, stealth-] (b) -- (e);
        \draw[stealth-] (c) -- (e);
        \draw[stealth-] (d) -- (e);

        \draw[stealth-] (a) -- (f);
        \draw[gray, stealth-] (b) -- (f);
        \draw[stealth-] (c) -- (f);
        \draw[stealth-] (d) -- (f);

        \node at (135:1.6) {\tiny{$r$}};
    \end{tikzpicture}
    \qquad
    \begin{tikzpicture}[baseline={(0,-.8)},xscale=.65,yscale=.65]
        \draw[-stealth, ultra thick,white!90!black] (-1,0) to (4.5,0);
        \draw[-stealth, ultra thick,white!90!black] (0,1) to (0,-3);
        \node (a) at (0,0) {$\kk$};
        \node (c) at (2,0) {$\kk$};
        \node() at (0,0.5) {$\scriptstyle 0$};
        \node() at (1,0.5) {$\scriptstyle 1$};
        \node() at (2,0.5) {$\scriptstyle 2$};
        \node() at (3,0.5) {$\scriptstyle 3$};
        \node() at (4,0.5) {$\scriptstyle 4$};
        \node() at (-0.5,0) {$\scriptstyle 0$};
        \node() at (-0.5,-1) {$\scriptstyle -1$};
        \node() at (-0.5,-2) {$\scriptstyle -2$};
        
        \node[draw, below left]() at (2.2,-1.8) {\footnotesize{$E^2(\mathbb{S}_1^2)$}};
    \end{tikzpicture}
    \qquad
    \begin{tikzpicture}[baseline={(0,-.8)},xscale=.65,yscale=.65]
        \draw[-stealth, ultra thick,white!90!black] (-1,0) to (4.5,0);
        \draw[-stealth, ultra thick,white!90!black] (0,1) to (0,-3);
        \node (a) at (0,0) {$\kk$};
        \node (c) at (4,-2) {$\kk$};
        \node() at (0,0.5) {$\scriptstyle 0$};
        \node() at (1,0.5) {$\scriptstyle 1$};
        \node() at (2,0.5) {$\scriptstyle 2$};
        \node() at (3,0.5) {$\scriptstyle 3$};
        \node() at (4,0.5) {$\scriptstyle 4$};
        \node() at (-0.5,0) {$\scriptstyle 0$};
        \node() at (-0.5,-1) {$\scriptstyle -1$};
        \node() at (-0.5,-2) {$\scriptstyle -2$};

        \node[draw, below left]() at (2.2,-1.8) {\footnotesize{$E^3(\mathbb{S}_2^2)$}};
    \end{tikzpicture}
    \caption{The spheres \(\mathbb{S}_r^1\) and \(\mathbb{S}_r^2\), and page \(E^{r+1}\) of the MPSS for \(r=1,2\) (\Cref{thm:spheres}). Arrows represent distances of \(r\). Where there is no arrow between distinct points, the distance is \(+\infty\).}
    \label{fig:r-spheres}
\end{figure}

Note also that for every \(r \geq 1\) and \(n \in \N\) we have \(E^s(\mathbb{S}_r^n) = E^0(\mathbb{S}_r^n)\) for \(0\leq s\leq r\), while \(E^s(\mathbb{S}_r^n) = E^{r+1}(\mathbb{S}_r^n)\) for \(s \geq r +1\).

%-------------------------------------------------------------
%-------------------------------------------------------------

\section{The case of directed graphs}
\label{sec:brown-cat-on-digraph}

In this closing section, we restrict the focus to directed graphs. Specializing the definitions and results of \Cref{sec:cofibrations-and-MV} yields a class \(\cof_1\) of cofibrations in \(\DiGraph\) which satisfy a Mayer--Vietoris theorem for bigraded path homology and path homology. Specializing the results of \Cref{sec:brown-cats-on-nmet} yields a Brown category structure on \(\DiGraph\) in which the class of cofibrations is \(\cof_1\) and the weak equivalences are maps inducing isomorphisms on bigraded path homology.

%-------------------------------------------------------------

\subsection{Cofibrations of directed graphs}

We begin by describing explicitly the maps in \(\DiGraph\) that we will take as our cofibrations. First, let \(M \colon \DiGraph \hookrightarrow \NMet\) be the functor equipping each directed graph with its shortest path metric. Observe that if \(H \hookrightarrow K\) is the inclusion of an induced subgraph with the property that there are no edges from vertices in \(V(K) \setminus V(H)\) to those in \(V(H)\), then \(M(H) \hookrightarrow M(K)\) is the inclusion of a metric subspace. We will call \(H\) a \emph{strong deformation retract} of \(K\) if \(M(H)\) is a strong 1-deformation retract of \(M(K)\) in the sense of \Cref{def:strong-r-def-retract}.

\begin{defn}
\label{def:digraph-cofib}
    A \demph{cofibration of directed graphs} is the inclusion of an induced subgraph \(i \colon H \hookrightarrow G\) for which the following conditions hold.
    \begin{enumerate}
        \item \label{cond:digraph-cofib_1} There are no edges from vertices in \(V(G) \setminus V(H)\) to those in \(V(H)\).
        \item \label{cond:digraph-cofib_2} There exists a factorization \(H \hookrightarrow K \hookrightarrow G\) of \(i\) such that
        \begin{enumerate}
            \item There are no edges from vertices in \(V(K)\) to those in \(V(G) \setminus V(K)\).
            \item The subgraph \(H\) is a strong deformation retract of \(K\).
        \end{enumerate}
    \end{enumerate}
   An example can be seen on the left in \Cref{fig:cofib_not_1cofib}.
\end{defn}

The proof of the next lemma is a direct check.

\begin{lem}\label{lem:cof_DG}
    A map  \(i \colon H \to G\) in \(\DiGraph\) is a cofibration of directed graphs if and only if the map \(M(i) \colon M(H) \to M(G)\) in \(\NMet\) belongs to \(\cof_1\). \qed
\end{lem}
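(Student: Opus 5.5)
The plan is to translate each clause of \Cref{def:r-cofib} (with \(r=1\)) into graph-theoretic language via the shortest path metric, and check that it matches the corresponding clause of \Cref{def:digraph-cofib}. Two general facts about \(M\) do most of the work. First, for vertex sets \(S,T\) of a graph \(G\), \(\dis_{M(G)}(s,t)=\infty\) for all \(s\in S\), \(t\in T\) if and only if no directed path runs from \(S\) to \(T\). Second, \(M\) is fully faithful (\Cref{lem:coreflective}), so subspaces of \(M(G)\) and maps between them correspond to graph data once we know the subspaces are of the form \(M(H)\).

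For the forward direction, let \(H\hookrightarrow K\hookrightarrow G\) be as in \Cref{def:digraph-cofib}. We first show that \(V(H)\) is inward-closed in \(M(G)\). Any directed path ending in \(V(H)\) and starting outside it contains an edge from \(V(G)\setminus V(H)\) into \(V(H)\), which condition \eqref{cond:digraph-cofib_1} forbids. The same argument, using (2a), shows that \(V(K)\) is outward-closed. Condition \eqref{cond:r-pair_2} of \Cref{def:r-pair} is automatic for \(r=1\), so \((M(G),M(K))\) is a \(1\)-pair. We also need \(M(H)\) and \(M(K)\) to be metric subspaces of \(M(G)\), i.e.\ that the induced path metrics agree with the ambient ones. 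For \(K\), a path between two vertices of \(K\) cannot leave \(K\) by outward-closedness. For \(H\), a path from a vertex of \(H\) to a vertex of \(H\) cannot leave \(H\), since it could never re-enter by inward-closedness. Hence the subgraphs are convex. The strong deformation retract condition (2b) is by definition the statement that \(M(H)\hookrightarrow M(K)\) is a strong \(1\)-deformation retract. So \(M(i)\in\cof_1\).

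For the converse, suppose \(M(i)\in\cof_1\) with a factorization \(M(H)\hookrightarrow U\hookrightarrow M(G)\). Let \(K\) be the induced subgraph of \(G\) on the vertex set of \(U\). Since \(U\) is outward-closed in \(M(G)\), every vertex reachable from \(K\) lies in \(K\). In particular there are no edges from \(V(K)\) to \(V(G)\setminus V(K)\), which gives (2a), and the metric of \(U\) coincides with that of \(M(K)\), so \(U=M(K)\). Inward-closedness of \(M(H)\) gives \(\dis(x,h)=\infty\) for \(x\notin V(H)\) and \(h\in V(H)\), so there are no edges into \(H\) from outside, which is \eqref{cond:digraph-cofib_1}. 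Finally, \(H\) is an induced subgraph because \(M(i)\) is an isometry: an edge of \(G\) between vertices of \(H\) gives distance \(\le1\) in \(M(H)\), hence an edge of \(H\). Condition (2b) then holds by definition.

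The only step requiring care is matching subspaces of \(M(G)\) with metrics of induced subgraphs, i.e.\ the convexity argument above. Everything else is unwinding definitions.
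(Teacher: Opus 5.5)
Your proposal is correct and takes the same approach as the paper, which says only that the lemma is a direct check: you translate inward- and outward-closedness into the edge conditions of \Cref{def:digraph-cofib}, and use the facts that a path cannot re-enter \(H\) or leave \(K\) to identify \(M(H)\) and \(M(K)\) with metric subspaces of \(M(G)\). One small point: in the forward direction you implicitly take \(K\) to be an induced subgraph of \(G\), as the paper intends. This is exactly what makes \(M(K)\) a subspace of \(M(G)\), and if non-induced \(K\) were allowed the equivalence would fail, so it is worth stating explicitly.
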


In a mild abuse of notation, we will also denote the class of cofibrations of directed graphs by \(\cof_1\).

\begin{rem}
    The class \(\cof_1\) in \(\DiGraph\) is obtained by pulling back the class of 1-cofibrations in \(\NMet\) along the functor \(M\). One can also pull back \(\cof_r\) for other values of \(r\), but the resulting classes of \(r\)-cofibrations of directed graphs are less interesting. Indeed, a subgraph inclusion \(i \colon H \hookrightarrow G\) is a 0-cofibration in this sense if and only if \(G\) decomposes as the coproduct \(H \sqcup G \setminus H\). For \(r > 1\) the map \(i\) is an \(r\)-cofibration if and only if \(G\) decomposes as a coproduct \(K \sqcup G \setminus K\) such that \(K\) contains \(H\) as a strong \(r\)-deformation retract.
\end{rem}

%-------------------------------------------------------------

\subsection{Two Mayer--Vietoris sequences}
\label{subsec:MV-Digraph}

Next, we describe how the Mayer--Vietoris sequence in \Cref{thm:mayer-vietoris} specializes to the category of directed graphs. That theorem says that if \(i \colon A \hookrightarrow X\) is an \(r\)-cofibration in \(\NMet\), then any pushout along \(i\) yields a long exact sequence on page \(E^{r+1}\) of the magnitude-path spectral sequence. Since cofibrations of directed graphs are 1-cofibrations, this specializes to a statement about page \(E^2\) of the MPSS. 

Recall from \Cref{subsec:MPSS} that the top horizontal axis of page \(E^2\) coincides with reduced path homology, denoted \(\GLMY\); the entirety of \(E^2\) is known as \emph{bigraded path homology} and denoted \(\PH\). Thus, the statements in this section concern bigraded path homology and reduced path homology. For both these functors, there already exist Mayer--Vietoris-type theorems: see \cite[Theorem~6.6]{HepworthWillerton2017}, \cite[Theorem~6.8]{HepworthRoff2024} and \cite[Theorem~3.25]{PH-eilenbergsteenrod}. Ours are novel in that they hold for a new class of cofibrations.

\begin{thm}[A Mayer--Vietoris Sequence for Bigraded Path Homology]
\label{thm:mayer-vietoris-BPH}
    Let \(i \colon H \hookrightarrow G\) be in \(\cof_1\), and let \(f \colon H \to K\) be any map in \(\DiGraph\). The pushout
    \begin{equation}
    \label{diag:digraph-pushout}
    \begin{tikzcd}
        H \arrow[hook, swap]{d}{i} \arrow{r}{f} \arrow[dr, phantom, "\scalebox{1.5}{\rotatebox{180}{$\lrcorner$}}" , very near end] & K \arrow{d}{j}\\
        G \arrow[swap]{r}{g} & G \cup_H K
    \end{tikzcd}
    \end{equation}
    gives rise to a long exact sequence on bigraded path homology:
    \[
    \begin{tikzcd}
    \cdots \: \PH_{p,q}(H)
        \arrow{r}{(i_\ast, -f_\ast)} 
    & \PH_{p,q}(G) \oplus \PH_{p,q}(K)
        \arrow{r}{g_\ast \oplus j_\ast}
        \ar[draw=none]{d}[name=X, anchor=center]{}
    & \PH_{p,q}(G \cup_H K)  
        \ar[rounded corners,
            to path={ -- ([xshift=2ex]\tikztostart.east)
                      |- (X.center) \tikztonodes
                      -| ([xshift=-2ex]\tikztotarget.west)
                      -- (\tikztotarget)}]{dll}[at end]{} \\      
    \PH_{p-1, q}(H)
        \arrow{r}{(i_\ast, -f_\ast)} 
    & \PH_{p-1, q}(G) \oplus \PH_{p-1, q}(K) 
        \arrow{r}{g_\ast \oplus j_\ast}
    & \PH_{p-1,q}(G \cup_H K) \: \cdots.
    \end{tikzcd}
    \]
\end{thm}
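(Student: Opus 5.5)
The plan is to deduce this directly from \Cref{thm:mayer-vietoris} with \(r=1\), transported along the embedding \(M \colon \DiGraph \hookrightarrow \NMet\). By \Cref{lem:cof_DG}, the map \(M(i) \colon M(H) \to M(G)\) belongs to \(\cof_1\) in \(\NMet\), and \(M(f)\) is a map in \(\NMet\). So \Cref{thm:mayer-vietoris} applies to the span \(M(G) \hookleftarrow M(H) \to M(K)\). It gives a long exact sequence on page \(E^2\) whose connecting map has bidegree \((-1,0)\), since \((p-r,q+r-1)=(p-1,q)\) when \(r=1\). This is exactly the shape of the claimed sequence, once we identify \(E^2(M(-))\) with \(\PH\) by definition of bigraded path homology.

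The only genuine point is to identify the pushout \(Y\) of \(M(i)\) along \(M(f)\) in \(\NMet\) with \(M(G \cup_H K)\). The paper notes that a coreflective inclusion creates colimits (\Cref{lem:coreflective} and the surrounding discussion). Since \(M\) is a left adjoint, it preserves pushouts, and so \(M(G\cup_H K)\) is a pushout of the span in \(\NMet\). By uniqueness of pushouts this is isomorphic to \(Y\), compatibly with the structure maps \(g\) and \(j\).

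As a sanity check, one can instead compare the formula of \Cref{lem:NMet_pushout} with the shortest path metric on \(G\cup_H K\). The cross distances \(\min_a\{\dis_K(y,f(a))+\dis_G(a,y')\}\) should equal shortest path lengths in the glued graph. This holds because \(H\) is inward-closed in \(G\), so any path in the glued graph from \(G\setminus H\) cannot return to \(K\). Within \(G\setminus H\), paths through \(K\) are excluded for the same reason, so distances there agree with \(\dis_G\).

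Finally, naturality gives that the maps in the sequence are induced by \(i,f,g,j\), with the sign convention inherited from \Cref{thm:mayer-vietoris}. I expect the main obstacle, if any, to be the check that \(M\) preserves this pushout. The left-adjoint argument settles it abstractly, and the explicit metric check above confirms it.
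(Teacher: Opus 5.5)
Your proposal is correct and is essentially the paper's proof. The paper likewise notes that \(M\) is a left adjoint and so preserves the pushout, uses \Cref{lem:cof_DG} to see that \(M(i)\in\cof_1\), and applies \Cref{thm:mayer-vietoris} with \(r=1\), so that \((p-r,q+r-1)=(p-1,q)\). Your explicit metric sanity check is not needed, but it is consistent with this: it correctly uses that \(H\) is inward-closed to rule out shortcuts through \(K\) between points of \(G\setminus H\).
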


\begin{proof}
    Since \(M \colon \DiGraph \hookrightarrow \NMet\) is a left adjoint (\Cref{lem:coreflective}), applying \(M\) to \eqref{diag:digraph-pushout} produces a pushout diagram in \(\NMet\) in which, by \Cref{lem:cof_DG}, the left leg is a 1-cofibration. Applying \Cref{thm:mayer-vietoris} to that pushout diagram, and taking \(r=1\), we obtain a long exact sequence on page \(E^2\) of the MPSS---that is, on bigraded path homology.
\end{proof}

By considering only the top horizontal axis of \(E^2\), we can write down the new Mayer--Vietoris sequence for path homology.

\begin{cor}[A Mayer--Vietoris Sequence for Path Homology]
\label{cor:mayer-vietoris-GLMY}
    Under the conditions of \Cref{thm:mayer-vietoris-BPH}, there is a long exact sequence on reduced path homology:
    \[
    \begin{tikzcd}
    \cdots \: \GLMY_{n}(H)
        \arrow{r}{(i_\ast, -f_\ast)} 
    & \GLMY_{n}(G) \oplus \GLMY_{n}(K)
        \arrow{r}{g_\ast \oplus j_\ast}
        \ar[draw=none]{d}[name=X, anchor=center]{}
    & \GLMY_{n}(G \cup_H K)  
        \ar[rounded corners,
            to path={ -- ([xshift=2ex]\tikztostart.east)
                      |- (X.center) \tikztonodes
                      -| ([xshift=-2ex]\tikztotarget.west)
                      -- (\tikztotarget)}]{dll}[at end]{} \\      
    \GLMY_{n-1}(H)
        \arrow{r}{(i_\ast, -f_\ast)} 
    & \GLMY_{n-1}(G) \oplus \GLMY_{n-1}(K) 
        \arrow{r}{g_\ast \oplus j_\ast}
    & \GLMY_{n-1}(G \cup_H K) \: \cdots.
    \end{tikzcd}
    \]
\end{cor}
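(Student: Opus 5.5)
The plan is to read off this corollary from \Cref{thm:mayer-vietoris-BPH} by restricting to the row \(q=0\) of page \(E^2\). The hypotheses are the same, so \Cref{thm:mayer-vietoris-BPH} already gives a long exact sequence on bigraded path homology \(\PH=E^2\circ M\). In that sequence the connecting map goes from \(\PH_{p,q}\) to \(\PH_{p-1,q}\); this is the degree shift \((p,q)\mapsto(p-r,q+r-1)\) of \Cref{thm:mayer-vietoris} with \(r=1\). So the whole long exact sequence stays inside a single row \(q\), and each fixed \(q\) gives its own long exact sequence in the index \(p\).

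Next I take \(q=0\) and use the isomorphism \(E^2_{n,0}(M(G))\cong\GLMY_n(G)\) of \cite[Proposition~6.11]{Asao-path}, recalled in \Cref{subsec:MPSS}. Substituting this into the row \(q=0\) of the sequence in \Cref{thm:mayer-vietoris-BPH} gives the displayed sequence. For this I have to check that the identification commutes with the maps \(i_\ast,f_\ast,g_\ast,j_\ast\). That holds because the isomorphism is natural in maps of directed graphs: it comes from a natural comparison between the path chain complex and the row \(q=0\) of \(E^1\). The connecting homomorphism on the path homology side is then defined as the transported map, so no separate check is needed for it.

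The only point needing care is this naturality, and the fact that the identification is with \emph{reduced} path homology. The row \(q=0\) at \(p=0\) is \(E^2_{0,0}\), which by \Cref{prop:E^r_00} is free on the \(1\)-connected components, and this matches the convention in \cite{Asao-path} for \(\GLMY_0\). I will therefore cite the naturality statement of Asao, rather than reprove it, and conclude.
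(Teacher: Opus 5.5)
Your proposal is correct and is essentially the paper's argument: the paper obtains the corollary in one line by restricting the sequence of \Cref{thm:mayer-vietoris-BPH} to the top row \(q=0\) of \(E^2\), which is closed under the connecting map since \(r=1\), and then using the identification \(E^2_{n,0}(M(G))\cong\GLMY_n(G)\) of \cite[Proposition~6.11]{Asao-path}. Your remarks on the naturality of that identification, and on degree \(0\) (where \(E^2_{0,0}\) is free on the weakly connected components by \Cref{prop:E^r_00}), only make explicit what the paper leaves implicit in that restriction.
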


%-------------------------------------------------------------

\subsection{A Brown category of directed graphs}
\label{subsec:Brown-cat-Digraph}

We close by verifying that the Brown category structure on \(\NMet\) established in \Cref{thm:NMet-Brown-cat} restricts, when \(r=1\), to a Brown category structure on \(\DiGraph\). One can compare this to the structure established by Hepworth and Roff in \cite{HepworthRoff2024}, which has the same weak equivalences; we will demonstrate that our cofibrations are distinct from theirs.

In a mild abuse of notation, let us now denote by \(\Ee_1\) the class of morphisms in \(\DiGraph\) inducing an isomorphism on bigraded path homology.

\begin{thm}
\label{thm:DiGraph-Brown-cat}
    \((\DiGraph, \Ee_1, \cof_1)\) is a Brown category with functorial cylinder.
\end{thm}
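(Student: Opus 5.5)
The plan is to transport the structure of \Cref{thm:NMet-Brown-cat} (with \(r=1\)) along the full and faithful embedding \(M \colon \DiGraph \hookrightarrow \NMet\). By definition \(\Ee_1\) in \(\DiGraph\) consists of the maps \(\phi\) with \(E^2(M\phi)\) an isomorphism. This matches the class \(\Ee_1\) of 1-quasi-isomorphisms in \(\NMet\): \(E^1(M\phi)\) is a quasi-isomorphism precisely when \(E^2(M\phi)\) is an isomorphism, by the characteristic isomorphisms of the spectral sequence. By \Cref{lem:cof_DG}, \(\cof_1\) in \(\DiGraph\) is exactly \(M^{-1}(\cof_1)\). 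So both classes are pulled back along \(M\), and I will check Axioms (1)--(5) of \Cref{def:brown-cat} one at a time.

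Axioms (1) and (2) transfer directly. \(M\) is a functor, so two-out-of-three for \(\Ee_1\) and closure of \(\cof_1\) under composition pull back. \(M\) is full and faithful, so isomorphisms in \(\DiGraph\) map to isomorphisms in \(\NMet\), and \(\NMet\)-isomorphisms between objects in the image are isomorphisms of graphs. Axiom (5) holds because \(M(\emptyset)=\emptyset\) and \Cref{eg:special-cofibs} applies. Finite coproducts exist in \(\DiGraph\) and are preserved by \(M\), since \(M\) is a left adjoint by \Cref{lem:coreflective}.

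Axiom (3) is the one step that needs care. Pushouts exist in \(\DiGraph\) because it is cocomplete (take the colimit of vertex sets with the induced edges). \(M\) preserves them, again because it is a left adjoint. Given \(i\in\cof_1\) and any graph map \(f\), the image under \(M\) of the \(\DiGraph\)-pushout is therefore the \(\NMet\)-pushout of \(M(i)\) along \(M(f)\). By \Cref{prop:cof_pushout}, this lies in \(\cof_1\), and lies in \(\cof_1\cap\Ee_1\) whenever \(M(i)\) does. Applying \Cref{lem:cof_DG} and the definition of \(\Ee_1\) in \(\DiGraph\) concludes.

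For Axiom (4) I need a functorial cylinder that stays inside \(\DiGraph\). The \(1\)-cylinder \(\Cyl_1(M(G)) = M(G)\plusx K_1\) is in the image of \(M\). Indeed, \(K_1\) is the shortest path metric of the zigzag graph \(-2\to-1\leftarrow 0\to 1\leftarrow 2\). The \(\ell_1\)-product of two shortest-path metrics is the shortest-path metric of the box product of the graphs, so \(\Cyl_1(M(G)) \cong M(G\,\square\, K_1)\). I will verify this by noting that a path in the box product decomposes into horizontal and vertical steps, and its length is the sum of the two projected lengths; this identification is routine. Then the factorization \(M(G)\sqcup M(G)\to \Cyl_1(M(G))\to M(G)\) from the proof of \Cref{thm:NMet-Brown-cat} consists of maps between objects in the image. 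By fullness these come from graph maps \(G\sqcup G\to G\square K_1\to G\), which lie in \(\cof_1\) and \(\Ee_1\) respectively. The construction \(G\mapsto G\square K_1\) is evidently functorial.
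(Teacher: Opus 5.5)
Your proposal is correct and follows essentially the same route as the paper. In both, the two classes are pulled back along $M$; Axioms (1), (2) and (5) come from functoriality, \Cref{prop:cof_composition} and \Cref{eg:special-cofibs}; Axiom (3) comes from \Cref{prop:cof_pushout} together with \Cref{lem:cof_DG} and the fact that $M$ preserves pushouts; and Axiom (4) comes from $\Cyl_1(M(G))$ lying in the image of $M$. Your identification $\Cyl_1(M(G)) \cong M(G \,\square\, K_1)$, with $K_1$ viewed as the zigzag graph, spells out a step the paper asserts without proof.
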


\begin{proof}
    Axiom~(1) in \Cref{def:brown-cat} follows from the fact that \(\PH\) is a functor. Axiom~(2) follows from \Cref{prop:cof_composition} and Axiom~(3) from \Cref{prop:cof_pushout}, using \Cref{lem:cof_DG} and the fact that \(M\) creates pushouts. For every directed graph \(G\) the 1-cylinder \(\Cyl_1(M(G))\) is itself in the image of \(M\) and provides a factorization of the codiagonal which satisfies Axiom~(4). \Cref{eg:special-cofibs}\!~\eqref{eg:empty-cofib} tells us that Axiom~(5) is satisfied. 
\end{proof}

In \cite{HepworthRoff2024}, Hepworth and Roff give \(\DiGraph\) the structure of a Brown category in which the class of weak equivalences is \(\Ee_1\) and the cofibrations form a class we will denote by \(\cof_{\mathrm{HR}}\). The maps in \(\cof_{\mathrm{HR}}\) were introduced by Carranza \textit{et al} in \cite{CDKOSW} in a transposed form; we recall the definition as it is given in \cite[Definition 6.2]{HepworthRoff2024}. 

\begin{defn}
    A subgraph inclusion \(H \hookrightarrow G\) belongs to \(\cof_{\mathrm{HR}}\) if $H$ is inward-closed in  \(G\) and satisfies the following condition. For each \(x\in rH\), where \(rH\) denotes the maximal outward-closed subgraph of \(G\) containing \(H\), there is a vertex \(\rho(x)\in H\) with the property that \(\dis(h,x)=\dis(h,\rho(x))+\dis(\rho(x),x)\) for every \(h\in H\).
\end{defn}

\begin{figure}
\centering
\begin{tikzpicture}

\draw[densely dashed] (1.7,0) circle [x radius=2.7,y radius=2.2];
\draw (3.4,1) node[] {$G$};

\draw[densely dashed] (1,0) circle [x radius=2,y radius=2];
\draw (1.8,1) node[] {$K$};

\draw[densely dashed] (.2,0) circle [x radius=1.2,y radius=1.6];
\draw (.2,1) node[] {$H$};

\begin{scope}[shift={(0,.25)}]
\coordinate (A22) at (.2,0.2); 
\coordinate (A23) at (.2,-1.2); 

\filldraw[fill=black, draw=black] (A22) circle[radius=0.05];
\draw (A22) node[left] {\footnotesize{$h_1$}};
\filldraw[fill=black, draw=black] (A23) circle[radius=0.05];
\draw (A23) node[left] {\footnotesize{$h_2$}};

\coordinate (U22) at (1.8,0); 
\coordinate (U23) at (1.8,-1); 

\filldraw[fill=black, draw=black] (U22) circle[radius=0.05];
\draw (U22) node[above] {\footnotesize{$k_1$}};
\filldraw[fill=black, draw=black] (U23) circle[radius=0.05];
\draw (U23) node[below] {\footnotesize{$k_2$}};

\coordinate (X22) at (3.8,-0.5); 

\filldraw[fill=black, draw=black] (X22) circle[radius=0.05];
\draw (X22) node[right] {\footnotesize{$g$}};

\draw[stealth-, shorten <= 2mm, shorten >= 1.5mm] (A23)--(A22);
\draw[stealth-, shorten <= 2mm, shorten >= 1.5mm] (U23)--(A22);

\draw[stealth-, shorten <= 2mm, shorten >= 1.5mm] (U23)--(U22);

\draw[stealth-, shorten <= 2mm, shorten >= 1.5mm] (U22)--(A22);
\draw[stealth-, shorten <= 2mm, shorten >= 1.5mm] (U23)--(A23);

\draw[stealth-, shorten <= 2mm, shorten >= 1.5mm] (U22)--(X22);
\draw[stealth-, shorten <= 2mm, shorten >= 1.5mm] (U23)--(X22);
\end{scope}

\end{tikzpicture}
\quad
\begin{tikzpicture}

\draw[densely dashed] (1.7,0) circle [x radius=2.7,y radius=2.2];
\draw (3.4,1) node[] {$M$};

\draw[densely dashed] (1,0) circle [x radius=2,y radius=2];
\draw (1.8,1) node[] {$P$};

\draw[densely dashed] (.2,0) circle [x radius=1.2,y radius=1.6];
\draw (.2,1) node[] {$N$};

\begin{scope}[shift={(0,.25)}]
\coordinate (A22) at (.2,0.2); 
\coordinate (A22b) at (.2,-0.5); 
\coordinate (A23) at (.2,-1.2); 

\filldraw[fill=black, draw=black] (A22) circle[radius=0.05];
\draw (A22) node[left] {\footnotesize{$n_1$}};
\filldraw[fill=black, draw=black] (A22b) circle[radius=0.05];
\draw (A22b) node[left] {\footnotesize{$n_2$}};
\filldraw[fill=black, draw=black] (A23) circle[radius=0.05];
\draw (A23) node[left] {\footnotesize{$n_3$}};

\coordinate (U22) at (1.8,0); 
\coordinate (U23) at (1.8,-1); 

\filldraw[fill=black, draw=black] (U22) circle[radius=0.05];
\draw (U22) node[above] {\footnotesize{$p_1$}};
\filldraw[fill=black, draw=black] (U23) circle[radius=0.05];
\draw (U23) node[below] {\footnotesize{$p_2$}};

\coordinate (X22) at (3.8,-0.5); 

\filldraw[fill=black, draw=black] (X22) circle[radius=0.05];
\draw (X22) node[right] {\footnotesize{$m$}};

\draw[stealth-, shorten <= 2mm, shorten >= 1.5mm] (A23)--(A22b);
\draw[stealth-, shorten <= 2mm, shorten >= 1.5mm] (A22b)--(A22);

\draw[stealth-, shorten <= 2mm, shorten >= 1.5mm] (U23)--(U22);

\draw[stealth-, shorten <= 2mm, shorten >= 1.5mm] (U22)--(A22);
\draw[stealth-, shorten <= 2mm, shorten >= 1.5mm] (U23)--(A23);

\draw[stealth-, shorten <= 2mm, shorten >= 1.5mm] (U22)--(X22);
\draw[stealth-, shorten <= 2mm, shorten >= 1.5mm] (U23)--(X22);
\end{scope}

\end{tikzpicture}
\caption{The inclusion \(H \hookrightarrow G\) is in \(\cof_1\) but not in \(\cof_{\mathrm{HR}}\), while \(N \hookrightarrow M\) is in \(\cof_{\mathrm{HR}}\) but not in \(\cof_1\). See \Cref{rem:cofib_not_1cofib}.
}
\label{fig:cofib_not_1cofib}
\end{figure}

The following example demonstrates that \(\cof_1\) and \(\cof_{\mathrm{HR}}\) are not the same. Indeed, we have neither \(\cof_1 \subseteq \cof_{\mathrm{HR}}\) nor \(\cof_{\mathrm{HR}} \subseteq \cof_1\). 

\begin{eg}\label{rem:cofib_not_1cofib}
    Consider the directed graphs in \Cref{fig:cofib_not_1cofib}. The inclusion \(i \colon H \hookrightarrow G\) is in \(\cof_1\): the map \(\pi \colon K \to H\) given by \(\pi(k_1) = h_1\) and \(\pi(k_2) = h_2\) exhibits \(H\) as a strong 1-deformation retract of \(K\). We have \(K=rH\), and one can check that there is no possible choice for \(\rho(k_2)\), so that \(i\) is not in \(\cof_{\mathrm{HR}}\).

    The inclusion \(j \colon N \hookrightarrow M\) is in \(\cof_{\mathrm{HR}}\): we have \(P=rN\) and the assignment \(\rho(p_1)=n_1\) and \(\rho(p_2)=n_3\) fulfills the required condition. But \(j\) is not in \(\cof_1\). To see this, observe that the only non-identity map of graphs from \(P\) to itself that fixes the vertices of \(N\) is the map that sends \(p_1\) to \(n_2\) and \(p_2\) to \(n_3\). This, then, is our only choice for a map \(\pi \colon P \to N\) exhibiting \(N\) as a strong 1-deformation retract. As \(\dis(n_2, p_1) = +\infty = \dis(p_1, n_2)\) we have neither \(j \circ \pi \rightsquigarrow_1 \id_{P}\) nor \(\id_{P} \rightsquigarrow_1 j \circ \pi\), and as there are no other maps of graphs from \(P\) to itself, there is no hope of constructing a longer zig-zag of \(1\)-homotopies.
\end{eg}

Thus, the Brown category structure in \Cref{thm:DiGraph-Brown-cat} is distinct from that in \cite[Theorem 7.2]{HepworthRoff2024}. However, we note that the factorization of the codiagonal is the same in both Brown categories, because the map \(\iota \colon \{-2,2\} \hookrightarrow K_1\) used in the proof of \Cref{prop:cylr-inclusions} lies in \(\cof_1\cap\cof_{\mathrm{HR}}\) and both structures have the same weak equivalences. This implies that to compute homotopy colimits with respect to \(\Ee_1\) in \(\DiGraph\) we can pick any cofibrant replacement of the diagram involved, either using \(\cof_1\), \(\cof_{\mathrm{HR}}\) or \(\cof'_1=\cof_1\cap\cof_{\mathrm{HR}}\). Indeed, it also implies that \((\DiGraph,\Ee_1,\cof'_1)\) is a Brown category.

\begin{rem}
    While we were finishing this paper, the paper~\cite{EIXYZ} by Eldridge \textit{et al} appeared on the arXiv. The authors there define a class of weak equivalences on \(\DiGraph\) called \emph{cubical weak equivalences} and prove that the \(\infty\)-category \(\DiGraph_\infty\) obtained as the \(\infty\)-localization  of \(\DiGraph\) at the class  of cubical weak equivalences is equivalent to the \(\infty\)-category of spaces. We plan to study the \(\infty\)-localizations  \(\DiGraph[\Ee_r^{-1}]\) when \(r\)-varies and compare them with \(\DiGraph_\infty\).
\end{rem}

%-------------------------------------------------------------

\bibliographystyle{plain} 
\bibliography{r-homotopy} 

@article{KapulkinMavinkurve,
title = {The fundamental group in discrete homotopy theory},
journal = {Advances in Applied Mathematics},
volume = {164},
pages = {102838},
year = {2025},
issn = {0196-8858},
doi = {https://doi.org/10.1016/j.aam.2024.102838},
url = {https://www.sciencedirect.com/science/article/pii/S0196885824001702},
author = {Krzysztof Kapulkin and Udit Mavinkurve}
}

@article{Carranza_Kapulkin_2024, 
title={Cubical setting for discrete homotopy theory, revisited}, 
volume={160}, 
DOI={10.1112/S0010437X24007486}, 
number={12}, 
journal={Compositio Mathematica}, 
author={Daneil Carranza and Krzysztof Kapulkin}, 
year={2024}, 
pages={2856–2903}}

@misc{EIXYZ,
    author = {Eldridge, Briony and Ivanov, Sergei O. and Xu, Xiomeng and Yau, Shing-Tung and Zhang, Mengmeng},
    title = {The discrete homotopy hypothesis for directed graphs},
    howpublished = {{\tt arXiv:2605.04959}},
    year = {2026},
}

@article {BBLL,
    AUTHOR = {Babson, Eric and Barcelo, H\'el\`ene and de Longueville, Mark
              and Laubenbacher, Reinhard},
     TITLE = {Homotopy theory of graphs},
   JOURNAL = {J. Algebraic Combin.},
  FJOURNAL = {Journal of Algebraic Combinatorics. An International Journal},
    VOLUME = {24},
      YEAR = {2006},
    NUMBER = {1},
     PAGES = {31--44},
      ISSN = {0925-9899,1572-9192},
   MRCLASS = {05E25 (05C99 18G99 55P99)},
  MRNUMBER = {2245779},
MRREVIEWER = {Alberto\ Cavicchioli},
       DOI = {10.1007/s10801-006-9100-0},
       URL = {https://doi.org/10.1007/s10801-006-9100-0},
}

@article {GLMY2014,
    AUTHOR = {Grigor'yan, Alexander and Lin, Yong and Muranov, Yuri and Yau,
              Shing-Tung},
     TITLE = {Homotopy theory for digraphs},
   JOURNAL = {Pure Appl. Math. Q.},
  FJOURNAL = {Pure and Applied Mathematics Quarterly},
    VOLUME = {10},
      YEAR = {2014},
    NUMBER = {4},
     PAGES = {619--674},
      ISSN = {1558-8599,1558-8602},
   MRCLASS = {55P99 (05C20)},
  MRNUMBER = {3324763},
       DOI = {10.4310/PAMQ.2014.v10.n4.a2},
       URL = {https://doi.org/10.4310/PAMQ.2014.v10.n4.a2},
}

@book {Riehl,
    AUTHOR = {Riehl, Emily},
     TITLE = {Category {T}heory in {C}ontext},
    SERIES = {Aurora Dover Modern Math Originals},
 PUBLISHER = {Dover Publications, Inc., Mineola, NY},
      YEAR = {2016},
     PAGES = {xvii+240},
      ISBN = {978-0-486-80903-8; 0-486-80903-X},
   MRCLASS = {18-01 (18Axx 97H99)},
  MRNUMBER = {4727501},
}

@article{Brown1973,
    author = {Brown, Kenneth S.},
    title = {Abstract homotopy theory and generalized sheaf cohomology},
    journal = {Trans. Amer. Math. Soc.},
    volume = {186},
    year = {1973},
    pages = {419--458},
}

@article{CarranzaKapulkinKim2023,
    author = {Carranza, Daniel and Kapulkin, Krzysztof and Kim, Jinho},
    title = {Nonexistence of Colimits in Naive Discrete Homotopy Theory},
    journal = {Appl.~Categ.~Struct.},
    fjournal = {Applied Categorical Structures},
    volume = {31},
    number = {5},
    year = {2023},
    note = {Article number 41},
}

@book {CartanEilenberg56,
    AUTHOR = {Cartan, Henri and Eilenberg, Samuel},
     TITLE = {Homological {A}lgebra},
 PUBLISHER = {Princeton University Press, Princeton, NJ},
      YEAR = {1956},
     PAGES = {xv+390},
}

@article {celw18,
    AUTHOR = {Cirici, Joana and Egas Santander, Daniela and Livernet, Muriel
              and Whitehouse, Sarah},
     TITLE = {Derived {$A$}-infinity algebras and their homotopies},
   JOURNAL = {Topology Appl.},
  FJOURNAL = {Topology and its Applications},
    VOLUME = {235},
      YEAR = {2018},
     PAGES = {214--268},
}

@article {celw,
    AUTHOR = {Cirici, Joana and Egas Santander, Daniela and Livernet, Muriel
              and Whitehouse, Sarah},
     TITLE = {Model category structures and spectral sequences},
   JOURNAL = {Proc. Roy. Soc. Edinburgh Sect. A},
  FJOURNAL = {Proceedings of the Royal Society of Edinburgh. Section A.
              Mathematics},
    VOLUME = {150},
      YEAR = {2020},
    NUMBER = {6},
     PAGES = {2815--2848},
}

@article {fglw22,
    AUTHOR = {Fu, Xin and Guan, Ai and Livernet, Muriel and Whitehouse,
              Sarah},
     TITLE = {Model category structures on multicomplexes},
   JOURNAL = {Topology Appl.},
  FJOURNAL = {Topology and its Applications},
    VOLUME = {316},
      YEAR = {2022},
     PAGES = {Paper No. 108104, 26},
}

@article{HepworthRoff2023,
    author = {Hepworth, Richard and Roff, Emily},
    title = {The reachability homology of a directed graph},
    journal = {Int. Math. Res. Not. IMRN.},
    fjournal = {International Mathematics Research Notices},
    volume = {2025},
    number = {3},
    pages = {1--18},
    year = {2025},
    note = {rnae280},
}

@misc{HepworthRoff2024,
    author = {Hepworth, Richard and Roff, Emily},
    title = {Bigraded path homology and the magnitude-path spectral sequence},
    howpublished = {{\tt arXiv:2404.06689}},
    year = {2024},
}

@misc {Asao-filtered,
      AUTHOR = {Asao, Yasuhiko},
       TITLE = {Magnitude and magnitude homology of filtered set enriched categories},
HOWPUBLISHED = {{\tt arXiv:2303.05677}}, 
        YEAR = {2023},
}

@article {Asao-path,
    AUTHOR = {Asao, Yasuhiko},
     TITLE = {Magnitude homology and path homology},
   JOURNAL = {Bull.~Lond.~Math.~Soc.},
  FJOURNAL = {Bulletin of the London Mathematical Society},
    VOLUME = {55},
      YEAR = {2023},
    NUMBER = {1},
     PAGES = {375-398},
      ISSN = {0024-6093},
       DOI = {10.1112/blms.12734},
}

@article {CDKOSW,
    AUTHOR = {Carranza, Daniel and Doherty, Brandon and Kapulkin, Krzyzstof
              and Opie, Morgan and Sarazola, Maru and Wong, Liang Ze},
     TITLE = {Cofibration category of digraphs for path homology},
   JOURNAL = {Algebr. Comb.},
  FJOURNAL = {Algebraic Combinatorics},
    VOLUME = {7},
      YEAR = {2024},
    NUMBER = {2},
     PAGES = {475--514},
      ISSN = {2589-5486},
   MRCLASS = {55N35 (55U10 55U40)},
  MRNUMBER = {4741925},
MRREVIEWER = {Daniel\ Graves},
       DOI = {10.5802/alco.341},
       URL = {https://doi.org/10.5802/alco.341},
}

@article{HepworthWillerton2017,
	title = {Categorifying the magnitude of a graph},
	author = {Hepworth, Richard and Willerton, Simon},
	pages = {31--60},
    journal = {Homol.~Homotopy Appl.},	fjournal={Homology, Homotopy and Applications},
	volume={19},
	year={2017},
}

@article {Meier16,
    AUTHOR = {Meier, Lennart},
     TITLE = {Fibration categories are fibrant relative categories},
   JOURNAL = {Algebr. Geom. Topol.},
  FJOURNAL = {Algebraic \& Geometric Topology},
    VOLUME = {16},
      YEAR = {2016},
    NUMBER = {6},
     PAGES = {3271--3300},
    }

@article {DIMZ,
    AUTHOR = {Di, Shaobo and Ivanov, Sergei O. and Mukoseev, Lev and Zhang,
              Mengmeng},
     TITLE = {On the path homology of {C}ayley digraphs and covering
              digraphs},
   JOURNAL = {J. Algebra},
  FJOURNAL = {Journal of Algebra},
    VOLUME = {653},
      YEAR = {2024},
     PAGES = {156--199},
      ISSN = {0021-8693,1090-266X},
   MRCLASS = {05C20 (05C25 20J05 55N35 55T10)},
  MRNUMBER = {4746956},
MRREVIEWER = {Mohammad\ Javad\ Nikmehr},
       DOI = {10.1016/j.jalgebra.2024.05.005},
       URL = {https://doi.org/10.1016/j.jalgebra.2024.05.005},
}

@article {Cisinski2010,
    AUTHOR = {Cisinski, Denis-Charles},
     TITLE = {Cat\'egories d\'erivables},
   JOURNAL = {Bull. Soc. Math. France},
  FJOURNAL = {Bulletin de la Soci\'et\'e{} Math\'ematique de France},
    VOLUME = {138},
      YEAR = {2010},
    NUMBER = {3},
     PAGES = {317--393},
      ISSN = {0037-9484,2102-622X},
   MRCLASS = {18G10 (18G55 55U35 55U40)},
  MRNUMBER = {2729017},
MRREVIEWER = {Timothy\ Porter},
       DOI = {10.24033/bsmf.2592},
       URL = {https://doi.org/10.24033/bsmf.2592},
}

@article {PH-curvature,
    AUTHOR = {Kempton, Mark and M\"{u}nch, Florentin and Yau, Shing-Tung},
     TITLE = {A homology vanishing theorem for graphs with positive curvature},
   JOURNAL = {Comm. Anal. Geom.},
  FJOURNAL = {Communications in Analysis and Geometry},
    VOLUME = {29},
      YEAR = {2021},
    NUMBER = {6},
     PAGES = {1449--1473},
      ISSN = {1019-8385,1944-9992},
   MRCLASS = {53C21 (57M15)},
  MRNUMBER = {4367431},
MRREVIEWER = {Marko\ \v{Z}ivkovi\'{c}},
       DOI = {10.4310/CAG.2021.v29.n6.a5},
       URL = {https://doi.org/10.4310/CAG.2021.v29.n6.a5},
}

@article {LeinsterShulman,
    AUTHOR = {Leinster, Tom and Shulman, Michael},
     TITLE = {Magnitude homology of enriched categories and metric spaces},
   JOURNAL = {Algebr. Geom. Topol.},
  FJOURNAL = {Algebraic \& Geometric Topology},
    VOLUME = {21},
      YEAR = {2021},
    NUMBER = {5},
     PAGES = {2175--2221},
      ISSN = {1472-2747,1472-2739},
   MRCLASS = {18G90 (16E40 51F99 55N31)},
  MRNUMBER = {4334510},
MRREVIEWER = {Ahmet\ A.\ Husainov},
       DOI = {10.2140/agt.2021.21.2175},
       URL = {https://doi.org/10.2140/agt.2021.21.2175},
}

@article {Asao-curvature,
    AUTHOR = {Asao, Yasuhiko},
     TITLE = {Magnitude homology of geodesic metric spaces with an upper
              curvature bound},
   JOURNAL = {Algebr. Geom. Topol.},
  FJOURNAL = {Algebraic \& Geometric Topology},
    VOLUME = {21},
      YEAR = {2021},
    NUMBER = {2},
     PAGES = {647--664},
      ISSN = {1472-2747,1472-2739},
   MRCLASS = {55N35 (51F99)},
  MRNUMBER = {4250513},
MRREVIEWER = {David\ Matthew\ Freeman},
       DOI = {10.2140/agt.2021.21.647},
       URL = {https://doi.org/10.2140/agt.2021.21.647},
}

@article {LeinsterGraph,
    AUTHOR = {Leinster, Tom},
     TITLE = {The magnitude of a graph},
   JOURNAL = {Math. Proc. Cambridge Philos. Soc.},
  FJOURNAL = {Mathematical Proceedings of the Cambridge Philosophical
              Society},
    VOLUME = {166},
      YEAR = {2019},
    NUMBER = {2},
     PAGES = {247--264},
      ISSN = {0305-0041,1469-8064},
   MRCLASS = {05C31 (05C50 18D20 57M15)},
  MRNUMBER = {3903118},
MRREVIEWER = {Lorenzo\ Traldi},
       DOI = {10.1017/S0305004117000810},
       URL = {},
}

@unpublished{GomiGeodesic,
    author = {Gomi, Kiyonori},
    title = {Magnitude homology of geodesic space},
    year = {2019},
    note = { {\tt arXiv:1902.07044}},
}

@article {Deligne,
    AUTHOR = {Deligne, Pierre},
     TITLE = {Th\'{e}orie de {H}odge. {II}},
   JOURNAL = {Inst. Hautes \'{E}tudes Sci. Publ. Math.},
  FJOURNAL = {Institut des Hautes \'{E}tudes Scientifiques. Publications
              Math\'{e}matiques},
    volume = {40},
      YEAR = {1971},
     PAGES = {5--57},
      ISSN = {0073-8301,1618-1913},
   MRCLASS = {14C30 (14F15)},
  MRNUMBER = {498551},
MRREVIEWER = {J.\ H. M. Steenbrink},
       URL = {http://www.numdam.org/item?id=PMIHES_1971__40__5_0},
}

@book {Hatcher,
    AUTHOR = {Hatcher, Allen},
     TITLE = {Algebraic {T}opology},
 PUBLISHER = {Cambridge University Press, Cambridge},
      YEAR = {2002},
     PAGES = {xii+544},
      ISBN = {0-521-79160-X; 0-521-79540-0},
   MRCLASS = {55-01 (55-00)},
  MRNUMBER = {1867354},
MRREVIEWER = {Donald\ W.\ Kahn},
}

@incollection {Gromov,
    AUTHOR = {Gromov, Mikhael},
     TITLE = {Quantitative homotopy theory},
 BOOKTITLE = {Prospects in mathematics ({P}rinceton, {NJ}, 1996)},
     PAGES = {45--49},
 PUBLISHER = {Amer. Math. Soc., Providence, RI},
      YEAR = {1999},
      ISBN = {0-8218-0975-X},
   MRCLASS = {57N65 (53C23 55P99)},
  MRNUMBER = {1660471},
MRREVIEWER = {Vagn\ Lundsgaard\ Hansen},
}

@unpublished {GLMY2013,
    AUTHOR = {Grigor’yan, Alexander and Lin, Yong and Muranov, Yuri and Yau, {S}hing-{T}ung},
    TITLE = {Homologies of path complexes and digraphs},
    YEAR = {2012},
    NOTE = {{\tt arXiv:1207.2834}},
}

@article {PH-eilenbergsteenrod,
    AUTHOR = {Grigor'yan, Alexander and Jimenez, Rolando and Muranov, Yuri
              and Yau, Shing-Tung},
     TITLE = {On the path homology theory of digraphs and
              {E}ilenberg-{S}teenrod axioms},
   JOURNAL = {Homology Homotopy Appl.},
  FJOURNAL = {Homology, Homotopy and Applications},
    VOLUME = {20},
      YEAR = {2018},
    NUMBER = {2},
     PAGES = {179--205},
      ISSN = {1532-0073,1532-0081},
   MRCLASS = {05C20 (05C10 55N40 55P10 57M15)},
  MRNUMBER = {3812462},
       DOI = {10.4310/HHA.2018.v20.n2.a9},
       URL = {https://doi.org/10.4310/HHA.2018.v20.n2.a9},
}

@unpublished {SOIvanov,
AUTHOR = {Ivanov, Sergei O.},
TITLE = {Nested homotopy models of finite metric spaces and their spectral homology},
YEAR = {2023},
NOTE = {{\tt arXiv:2312.11878}},
}

@misc{RB09,
    author = {Radulescu-Banu, Andrei},
    title = {Cofibrations in homotopy theory},
    howpublished = {{\tt arXiv:math/0610009v4}},
    year = {2009},
}

\end{document}